\newtheorem{thm}{Theorem}[section]
\newtheorem{lem}[thm]{Lemma}
\newtheorem{prop}[thm]{Proposition}
\theoremstyle{definition}
\newtheorem{defn}[thm]{Definition}
\theoremstyle{remark}
\newtheorem{rem}[thm]{Remark}
\numberwithin{equation}{section}
\newtheorem*{lem*}{Lemma}
\let\emptyset\varnothing
\DeclareMathOperator{\supp}{supp}
\newcommand{\ZZ}{\mathbb{Z}}
\newcommand{\NN}{\mathbb{N}}
\newcommand{\RR}{\mathbb{R}}
\newcommand{\CC}{\mathbb{C}}
\newcommand{\s}{\mathbf{s}}
\renewcommand{\t}{\mathbf{t}}
\newcommand{\x}{\mathbf{x}}
\renewcommand{\u}{\mathbf{u}}
\renewcommand{\v}{\mathbf{v}}
\newcommand{\w}{\mathbf{w}}
\newcommand{\z}{\mathbf{z}}
\newcommand{\bpsi}{\boldsymbol{\psi}}
\newcommand*{\medcap}{\mathbin{\scalebox{1.5}{\ensuremath{\cap}}}}
\newcommand{\tsp}{\thinspace}
\begin{document}

\title[Diophantine equations in primes]{Diophantine equations in primes:
density of prime points on affine hypersurfaces II}

\author{Shuntaro Yamagishi}
\address{Mathematisch Instituut, Universiteit Utrecht, Budapestlaan 6, NL-3584 CD Utrecht, The Netherlands}
\email{s.yamagishi@uu.nl}
\indent

\date{Revised on \today}

\begin{abstract}
Let $F \in \mathbb{Z}[x_1, \ldots, x_n]$ be a homogeneous form of degree $d \geq 2$, and let $V_F^*$ denote the singular locus of the affine variety
$V(F) = \{ \mathbf{z} \in {\mathbb{A}}^n_{\CC}: F(\mathbf{z}) = 0 \}$. 
In this paper, we prove the existence of integer solutions with prime coordinates to the equation $F(x_1, \ldots, x_n) = 0$ provided $F$ satisfies suitable local conditions
and $n - \dim V_F^* \geq  7 d (2d-1) 4^d  + 4 (d-1) (12d - 1) 2^d + 12d$. The result is obtained by using the identity
$\Lambda = \mu * \log$ for the von Mangoldt function
and optimizing various parts of the argument in the author's previous work, which
made use of the Vaughan identity and required $n - \dim V_F^* \geq 2^8 3^4 5^2 d^3 (2d-1)^2 4^{d}$.
\end{abstract}

\subjclass[2010]
{11D45, 11D72, 11P32, 11P55}

\keywords{Hardy-Littlewood circle method, Diophantine equations, primes}

\maketitle

\section{Introduction}
Let $F \in \mathbb{Z}[x_1, \ldots, x_n]$ be a homogeneous form of degree $d \geq 2$.
In this paper, we are interested in the problem of finding prime solutions, which are integer solutions with prime coordinates, to the equation
\begin{equation}
\label{eqn main}
F(x_1, \ldots, x_n) = 0.
\end{equation}
First we introduce some notation in order to state our result. 
Let $\wp$ denote the set of prime numbers. Let $\mathbb{Z}_p^{\times}$ be the units of $p$-adic integers. We consider the following condition.

Local conditions ($\star$): The equation (\ref{eqn main})
has a non-singular real solution in $(0,1)^{n}$, and also
has a non-singular solution in $( \mathbb{Z}_p^{\times})^n$ for every $p \in \wp$.
\newline
\newline
Let $V^*_{F}$ denote the singular locus of $V(F) = \{  \mathbf{z} \in \mathbb{A}^n_{\CC}:  F(\mathbf{z}) = 0  \}$, i.e. it is the affine variety
defined by 
\begin{equation}
\label{sing loc}
V_{F}^* =  \{ \mathbf{z} \in \mathbb{A}_{ \mathbb{C} }^{n}:  \nabla F(\mathbf{z}) = \mathbf{0} \},
\end{equation}
where
$\nabla F = \left( \frac{\partial F}{\partial x_1}, \ldots,  \frac{\partial F}{\partial x_n} \right)$. We also define the following class of smooth weights.
\begin{defn}
Let $\delta, \mathfrak{c} > 0$ and $M_0 \in \mathbb{Z}_{\geq 0}$.
We define $\mathcal{S}^+( \delta ;  M_0; \mathfrak{c})$ to be the set of smooth functions $\omega :\mathbb{R} \rightarrow [0, \infty)$ satisfying
\begin{enumerate}[(i)]
\item $\supp \omega = [-\delta, \delta]$,
\item for any $k \in \{ 0, \ldots, M_0 \}$ we have
$\| \partial^{k} \omega / \partial x^{k} \|_{L^\infty(\mathbb{R})} \leq \mathfrak{c}$.
\end{enumerate}
\end{defn}
Let $X > 1$. Let $\mathbf{x}_0 = (x_{0,1}, \ldots, x_{0,n}) \in (0,1)^n$ and $\omega \in \mathcal{S}^+( \delta ;  n; \mathfrak{c})$,
and we define
\begin{eqnarray}
\label{defn of psi}
\psi_i(x_i) = \omega \left( \frac{x_i}{X} - x_{0,i}  \right) \quad (1 \leq i \leq n).
\end{eqnarray}
Let $\Lambda$ denote the von Mangoldt function, where $\Lambda(x)$ is $\log p$ if $x$ is a power of $p \in \wp$ and $0$ otherwise.
Given $\mathcal{X} \subseteq \mathbb{C}^n$,  we let $\mathbbm{1}_{ \mathcal{X}}$ be the characteristic function of $\mathcal{X}$, i.e.
$\mathbbm{1}_{ \mathcal{X}} (\x) = 1$ if $\x \in \mathcal{X}$ and $0$ otherwise.


In this paper, we prove the following result by the Hardy-Littlewood circle method.
\begin{thm}
\label{mainthm}
Let $F \in \mathbb{Z}[x_1, \ldots, x_n]$ be a homogeneous form of degree $d \geq 2$ satisfying
\begin{eqnarray}
\label{codim of F}
n -  \dim V_F^* \geq  7 d (2d-1) 4^d  + 4 (d-1) (12d - 1) 2^d + 12d,
\end{eqnarray}
and the local conditions \textnormal{($\star$)}. Let $\mathbf{x}_0 \in (0,1)^n$ be a non-singular real solution to the equation (\ref{eqn main}).
Let $\delta, \mathfrak{c} > 0$ and $\omega \in \mathcal{S}^+( \delta ; n; \mathfrak{c})$, where $\delta$ is sufficiently small with respect to $F$ and $\mathbf{x}_0$, and let
$\psi_1, \dots, \psi_n$ be as in (\ref{defn of psi}).
Then for any $A > 0$ we have
\begin{equation}
\label{asympt}
\sum_{\mathbf{x} \in [0,X]^n } \prod_{1 \leq i \leq n} \psi_i(x_i)  \Lambda(x_i) \cdot \mathbbm{1}_{ V(F) } (\mathbf{x}) = c(F; \omega, \mathbf{x}_0) \, X^{n-d} + O \left(
\frac{X^{n-d}}{(\log X)^A} \right),
\end{equation}
where $c(F; \omega, \mathbf{x}_0) > 0$ is a constant depending only on $F$, $\omega$ and $\mathbf{x}_0$.
\end{thm}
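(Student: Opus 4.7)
The plan is to apply the Hardy--Littlewood circle method. Writing the left hand side of \eqref{asympt} as $\int_0^1 T(\alpha)\, d\alpha$ for
\[
T(\alpha) = \sum_{\mathbf{x} \in [0,X]^n} \prod_{i=1}^n \psi_i(x_i) \Lambda(x_i) \, e(\alpha F(\mathbf{x})),
\]
I would partition the unit interval into major arcs $\mathfrak{M}$ (neighbourhoods of $a/q$ with $q \le (\log X)^B$ and half-width $(\log X)^B X^{-d}$) and their complement $\mathfrak{m}$. On $\mathfrak{M}$ a standard Siegel--Walfisz argument replaces each $\Lambda(x_i)$ by its expected density on residue classes modulo $q$ and reduces $\int_\mathfrak{M} T(\alpha)\, d\alpha$ to the product of a truncated singular series and a truncated singular integral. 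The local conditions $(\star)$ yield positivity of the singular series, while the non-singular real zero $\mathbf{x}_0 \in (0,1)^n$ together with the smallness of $\delta$ yields positivity of the singular integral, producing the main term $c(F;\omega,\mathbf{x}_0)\, X^{n-d}$.

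The main work lies on $\mathfrak{m}$, and my strategy is to use the identity $\Lambda = \mu * \log$ simultaneously in every coordinate, writing
\[
\prod_{i=1}^n \Lambda(x_i) = \sum_{\substack{a_i b_i = x_i \\ 1 \le i \le n}} \prod_{i=1}^n \mu(a_i) \log b_i.
\]
After a dyadic split $a_i \asymp A_i$, $b_i \asymp B_i$, the sum $T(\alpha)$ decomposes into $O((\log X)^{O(n)})$ multilinear pieces indexed by the cut-offs. Depending on the sizes of the $A_i$, each piece is either a Type~I sum (some $A_i$ small, so the corresponding $b_i$ is summed smoothly and a Birch-type exponential sum estimate applies directly) or a Type~II sum (all variables in a balanced range, requiring Cauchy--Schwarz in the $\mathbf{a}$-variables to convert the task into bounding a differenced smoother exponential sum in the $\mathbf{b}$-variables). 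In both regimes the decisive ingredient is a Birch-type bound: for $\alpha \in \mathfrak{m}$, a smooth exponential sum over $F$ (or a differenced version) is bounded by $X^{n-\Delta}$ with $\Delta$ controlled explicitly by $n - \dim V_F^*$ through iterated Weyl differencing.

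The hard part is the bookkeeping. Each Cauchy--Schwarz doubles the effective degree of the phase in the active variables, and each Weyl-differencing step in the Birch bound consumes one unit of codimension per variable, producing the factors $2^d$ and $4^d$ in \eqref{codim of F}. The three summands $7d(2d-1)4^d$, $4(d-1)(12d-1)2^d$, and $12d$ correspond respectively to the bilinear (Type~II) regime, the linear (Type~I) regime, and the logarithmic losses incurred when passing from a pointwise minor-arc bound to an integrated one and recovering the $(\log X)^{-A}$ saving via Dirichlet's approximation theorem. The gain over the Vaughan-identity approach comes precisely from $\mu * \log$ having only two convolutional levels rather than three, so no cubic (Type~III) sum arises. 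Optimising the split-points $A_i, B_i$ to balance the Type~I and Type~II losses, and combining the major- and minor-arc contributions, then yields \eqref{asympt} with implied constant depending on $A$.
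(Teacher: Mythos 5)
Your overall framework (circle method, $\Lambda = \mu * \log$ dyadic decomposition, and a Birch-type Weyl differencing bound on the resulting exponential sums) is broadly in the right spirit, but two parts of the sketch conceal genuine gaps where the paper has to do something quite different.

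First, the major arcs. You take $q \leq (\log X)^B$ and invoke Siegel--Walfisz. This leaves a problematic intermediate range: for $\alpha$ within distance $\approx X^{-d}$ of a rational $a/q$ with $(\log X)^B < q \ll X^{\vartheta_0}$, a Siegel--Walfisz major-arc expansion does not apply, and the Birch-type pointwise minor-arc bound (which only saves a power of $X$ when one is far from \emph{all} rationals of denominator up to $X^{\vartheta_0}$) is not available either. In the paper this is resolved by taking \emph{extended} major arcs $\mathfrak{M}^+(\vartheta_0)$ with $q$ going up to a genuine power $X^{\vartheta_0}$, at the price of replacing Siegel--Walfisz by the explicit formula, a zero-free region à la Iwaniec (Theorem \ref{thm zero free}), and zero-density estimates (Lemma \ref{Gbdd}), with an explicit treatment of a possible exceptional zero. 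That machinery is indispensable, not an optional refinement: without it the integral over the intermediate arcs is uncontrolled.

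Second, the minor arcs. You describe a Type~I/Type~II split after the dyadic cut, with Cauchy--Schwarz in the $\mathbf{a}$-variables. The paper does not proceed this way. After writing $x_i = u_i v_i$ with $U_i \leq V_i$, it treats \emph{all} ranges uniformly through the bihomogeneous form $G(\mathbf{u},\mathbf{v}) = F(u_1v_1,\ldots)$ and applies bihomogeneous Weyl differencing (Schindler's variant of Birch's method). The engine that makes this give explicit codimension control is a structural dichotomy of the form $F$ itself (Definition~\ref{dichotomy}), together with a rank-concentration lemma (Lemma~\ref{lemma rank concn}): one partitions the coordinates by the dyadic size of $U_i$ and shows that some part carries enough of the codimension of $V_F^*$. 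Proposition~\ref{semiprimeprop} then controls $\textnormal{codim}\,V^*_{\mathcal{G},2}$ for the resulting bihomogeneous form. None of this is forced by a Type~I/Type~II bookkeeping, and your reading of the three summands in \eqref{codim of F} as ``Type~II / Type~I / log losses'' is not how they arise: all three come from the same case-(II) minor-arc estimate via the choice $H = 6d$ and $\mathcal{C}_0 = 4(d-1)2^d + 1$, through the inequality $\textnormal{codim}\,V_F^* > 7d(2d-1)4^d + (12d-1)\mathcal{C}_0$. The absence of a cubic/Type~III regime is a genuine and correctly identified advantage of $\mu*\log$ over Vaughan, but it is the bihomogeneous structure plus rank concentration, not a Type~I/Type~II optimisation, that delivers the quantitative bound.
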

The result improves on the author's previous work \cite{Y2} in which the above asymptotic formula (\ref{asympt}) was obtained
with
$$
n -  \dim V_F^* \geq 2^8 3^4 5^2 d^3 (2d-1)^2 4^{d}
$$
instead of (\ref{codim of F}). The first result in this direction for a general degree $d$ was obtained by Cook and Magyar \cite{CM}, where they required
$n -  \dim V_F^*$ to be an exponential tower in $d$. For results when $d=2$, we refer the reader to see
\cite{Coo}, \cite{Gree}, \cite{Li} and \cite{Zh}.
One day after the author's work \cite{Y2} appeared on the arXiv in May 2021
(we note that \cite{Y2} was submitted in January 2019 and accepted in March 2021), a preprint by
Liu and Zhao \cite{LZ} appeared on the arXiv, where they developed an intricate, and different, method establishing the result with
$$
n \geq 16 d^2 4^d
$$
when $\dim V_F^* = 0$. We remark that the results in \cite{CM} and \cite{LZ} are obtained for systems of forms of equal degrees and without smooth weights.
As mentioned in \cite{Y2}, we expect that these differences can be overcome with additional technical effort. Also we may deduce the existence of prime solutions from Theorem
\ref{mainthm}; we refer the reader to see \cite{Y2} for the details. It may be possible to further improve on
Theorem \ref{mainthm} by combining the work of this paper with the method developed by Liu and Zhao.
Another potential approach to further improve on Theorem \ref{mainthm} is described in the final paragraph of this paper.

The main ideas of this paper go back to the author's previous work \cite{Y2}.
The result is proved by incorporating several technical simplifications and improvements
to optimize various parts of the argument in \cite{Y2}, one of the changes being the use of
the identity $\Lambda = \mu * \log$, i.e.
$$
\Lambda(x) = \sum_{m | x} \mu(m) \log (x/m),
$$
where $\mu$ denotes the M\"{o}bius function (defined in (\ref{defmob})), instead of the Vaughan identity.
In Sections \ref{prem major} and \ref{prem minor}, we prepare for the major and minor arcs analyses respectively, and treat them in the subsequent sections.
We complete the proof of Theorem \ref{mainthm} in Section \ref{conc}. The following exponential sum
\begin{equation}
\label{def S}
S(\alpha) = \sum_{\mathbf{x} \in [0, X]^n } \prod_{1 \leq i \leq n} \psi_i(x_i)  \Lambda(x_i) \cdot e( \alpha F(\mathbf{x}) ),
\end{equation}
where $e(z) = e^{2 \pi i z}$ for $z \in \mathbb{R}$,
is central to our study. In particular, it will be relevant in Sections \ref{major}--\ref{conc}. 

\textit{Notation.} Throughout the paper, we use $\ll$ and $\gg$ to denote Vinogradov's well-known notation, i.e.
the statement $f \ll g$ means there exists a positive constant $C$ (it may depend on parameters
which are regarded as fixed) such
that $|f| \leq  C g$ for all values under consideration, and  the statement $g \gg f$ is equivalent to $f \ll g$.
We also make use of the $O$-notation; the statement $f = O(g)$ is equivalent to $f \ll g$.
All the implicit constants in $\ll, \gg$ and the $O$-notation are independent of $X$. Given a homogeneous form
$G \in \mathbb{C}[x_1, \ldots, x_m]$, we denote
$$
\textnormal{codim} \tsp V_G^* =  m - \dim V_G^*.
$$
Also for $f \in \mathbb{C}[x_1, \ldots, x_m]$, we let $f^{[j]}$ denote the degree $j$ homogeneous portion of $f$.
By $\mathbb{A}^m_{\CC}$ we mean the affine $m$-space over $\CC$, i.e. the set $\CC^m$ with the Zariski topolgy, and for any
$f \in \mathbb{C}[x_1, \ldots, x_m]$
we denote
$$
V(f) = \{ \x \in \mathbb{A}^m_{\CC}: f(\x) = 0  \}.
$$
Given a function $\varrho: \RR \to \RR$, we let $\supp \varrho = \overline{\{ z \in \RR:  \varrho(z) \neq 0 \}}$, i.e.
the closure of the set $\{ z \in \RR:  \varrho(z) \neq 0 \}$. We shall refer to $\mathfrak{B} \subseteq \mathbb{R}^m$ as a box if $\mathfrak{B}$ is of the form
$
\mathfrak{B} = I_1 \times \cdots \times I_m,
$
where each $I_j$ is a closed, open or half open/closed interval. Finally, $\varepsilon$ will always be a sufficiently small (with respect to relevant parameters) positive number, even when it is not explicitly stated so, and
also we do not assume it to be fixed line by line. In other words, we allow it to vary from one exression/inequality to the next.
For example, we may write $X^{2^n \varepsilon} \ll  X^{\varepsilon} \ll X^{1 - \varepsilon}$ and the three $\varepsilon$ that appear
should not necessarily be interpreted as the same real number.

\textit{Acknowledgements.} The author was supported by the NWO Veni Grant \texttt{016.Veni.192.047}
while working on this paper.

\section{Preliminaries for the major arcs analysis}
\label{prem major}
Let $q \in \mathbb{N}$ and $\chi^0$ be the principal character modulo $q$. We consider $\chi^0$ as a primitive character modulo $q$ only when $q=1$, and not otherwise. We
will use the following zero-free region estimate of the Dirichlet $L$-functions. The proof is given in \cite[Theorem 7.1]{Y2} with the main input being \cite{I}.
\begin{thm}
\label{thm zero free}
Let $s = \sigma + it$, $M \geq 3$, $T \geq 0$ and $\mathfrak{L} = \log M(T + 3)$.
Then there exists an absolute constant $c_1 > 0$ such that $L(s, \chi) \not = 0$ whenever
\begin{eqnarray}
\label{region}
\sigma \geq 1 - \frac{c_1}{\log M + ( \mathfrak{L} \log 2 \mathfrak{L}  )^{3/4}} \ \ \textnormal{  and  } \ \  |t| \leq T
\end{eqnarray}
for all primitive characters $\chi$ of modulus $q \leq M$, with the possible exception of at most one primitive character
$\widetilde{\chi}$ modulo $\widetilde{r}$.
If such $\widetilde{\chi}$ exists, then $L(s, \widetilde{\chi})$ has at most one zero in (\ref{region}) and the exceptional zero $\widetilde{\beta}$ is real and simple, and
$\widetilde{r}$ satisfies $M \geq \widetilde{r} \gg_A (\log M)^A$ for any $A>0$.
\end{thm}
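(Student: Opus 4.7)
The plan is to combine a strong single-character zero-free region of Vinogradov--Korobov type with the classical Page--Landau argument to control all but one exceptional character, and then close with a Siegel-type lower bound on the exceptional modulus. First, I would invoke the main result of Iwaniec~\cite{I}, which provides, for each primitive character $\chi$ modulo $q \leq M$, a zero-free region of exactly the claimed shape. The exponent $3/4$ comes from the strength of the mean-value estimates used there, rather than the classical $2/3$ exponent of the unweighted Vinogradov--Korobov bound. Applying \cite{I} character-by-character handles all complex zeros uniformly, and leaves only possible real zeros of real characters near $s = 1$ to address.

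Next, to reduce to at most one exceptional real zero, I would apply the Page--Landau argument. Suppose two distinct primitive real characters $\chi_1, \chi_2$ (of moduli $q_1, q_2 \leq M$) both had real zeros $\beta_1, \beta_2$ in the region (\ref{region}). Consider the Dirichlet series
\[
\zeta(s) \, L(s, \chi_1) \, L(s, \chi_2) \, L(s, \chi_1 \chi_2),
\]
whose coefficients are non-negative. Comparing the residue at the simple pole $s = 1$ with the contribution of the four zeros $\beta_1, \beta_2$ (and their symmetric partners) through a standard Deuring--Heilbronn style inequality yields a contradiction if both $\beta_j$ are too close to $1$. This establishes the uniqueness of the exceptional $\widetilde{\chi}$ and, simultaneously, that $\widetilde{\chi}$ is real and that the exceptional zero $\widetilde{\beta}$ is real and simple.

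Finally, to prove $\widetilde{r} \gg_A (\log M)^A$, I would invoke Siegel's theorem in its standard (ineffective) form: comparing $L(s, \widetilde{\chi})$ with $L(s, \chi^\prime)$ for an auxiliary real primitive character $\chi^\prime$ chosen so that either $\chi^\prime$ already has an exceptional zero (in which case $\widetilde{r}$ is forced to be large in terms of the modulus of $\chi^\prime$) or it does not (in which case one obtains an unconditional lower bound on $L(1, \widetilde{\chi})$ and hence on $1 - \widetilde{\beta}$). Since $A$ is arbitrary and effective constants are not required, this dichotomy yields the stated ineffective bound.

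The main technical obstacle is ensuring that the hybrid dependence on $M$ and $T+3$ comes out in the precise asymmetric form $\log M + (\mathfrak{L} \log 2\mathfrak{L})^{3/4}$ rather than the more symmetric $(\mathfrak{L} \log 2\mathfrak{L})^{3/4}$; this is exactly what \cite{I} provides as input, and aligning it cleanly with the Page--Landau step is the only nontrivial bookkeeping. Everything else is classical and follows the template of \cite[Theorem~7.1]{Y2}.
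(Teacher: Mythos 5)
The paper does not actually prove this theorem here; it cites \cite[Theorem 7.1]{Y2} and names Iwaniec \cite{I} as the main input, which matches your plan exactly (Iwaniec's hybrid zero-free region, then the Page--Landau argument to reduce to a single exceptional real primitive character with a simple real exceptional zero, then Siegel's theorem for $\widetilde{r} \gg_A (\log M)^A$). One minor inaccuracy to flag: the exponent $3/4$ is \emph{weaker} than the Vinogradov--Korobov $2/3$ (a larger exponent in the denominator yields a narrower region), so attributing it to the \emph{strength} of Iwaniec's mean-value estimates is backwards --- what \cite{I} buys is the linear $\log M$ dependence on the modulus, at the cost of a worse $t$-aspect exponent.
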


\begin{rem} \label{remZFR}
Let $M = X^{\vartheta_0}$ for a fixed $\vartheta_0 > 0$ and $X$ sufficiently large. Then we have $\widetilde{r} \gg_A (\log X)^{A}$ for any $A > 0$
(the subscript in $\gg_A$ is to indicate that the implicit constant depends on $A$).
In particular, the exceptional zero will not occur for the Dirichlet $L$-functions associated to primitive characters of modulus $1 \leq q \leq (\log X)^D$ with $D > 0$. It
also follows from Siegel's theorem (see \cite[Remark 7.2]{Y2}) that
\begin{equation}
\label{excepZZ}
0 < 1 - \widetilde{\beta} < \frac12.
\end{equation}
\end{rem}
Let
$$
B_T = \{ s = \sigma + it : 0 \leq \sigma \leq 1 ,  |t| \leq T \}.
$$ Let
$N_{\chi}(\alpha, T)$ denote the number of zeros, with multiplicity, of $L(s, \chi)$ in the rectangle
$\alpha \leq \sigma \leq 1$ and $|t| \leq T$.
For the remainder of this paper, we let $\sum'_{\rho}$ denote the sum over the non-exceptional zeros
(with respect to Theorem \ref{thm zero free}), with multiplicity, of $L(s, \chi)$ in $B_T$ and let $\textnormal{Re }\rho = \beta$.
Let $\sum^*_{\chi (\textnormal{mod} \tsp r)}$  
denote the sum over the primitive characters $\chi$ modulo $r$.
The proof of the following lemma is given in \cite[Lemma 7.3]{Y2} with the main inputs being \cite{Hux} and \cite{J}.
\begin{lem}
\label{Gbdd}
Let $\xi, D, A > 0$. Let $M = X^{\vartheta_0}$ and $T = X^{\gamma}$, where $\vartheta_0, \gamma > 0$ satisfy
$2\vartheta_0 + \gamma < \frac{5}{12}$. Then we have
\begin{align}
\textnormal{i)}& \ \ \ \ \  \ \ \ \ \ \ \  \notag \sum_{  1 \leq r \leq M} \  \sideset{}{^*}\sum_{\chi (\textnormal{mod} \tsp r)} \sideset{}{'}\sum_{\rho} (\xi X)^{\beta - 1}
\ll 1 ,
\\
\notag
\\
\notag
\textnormal{ii)}&  \ \  \ \ \ \
\sum_{  1 \leq r \leq (\log X)^D } \  \sideset{}{^*}\sum_{\chi (\textnormal{mod} \tsp r)} \sum_{ \substack{ \rho \in B_T \\ L(\rho, \chi) = 0  } } (\xi X)^{\beta - 1} \ll
(\log X)^{-A},
\end{align}
where the sum $\sum_{ \substack{ \rho \in B_{T}  \\  L(\rho, \chi) = 0  }}$ is over all the zeros, with multiplicity, of $L(s, \chi)$ in $B_{T}$.
Here the implicit constants may depend on $\xi, D$ and $A$.
\end{lem}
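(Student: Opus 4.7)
The plan is to derive both bounds by combining the zero-free region of Theorem~\ref{thm zero free} with a hybrid zero-density estimate of Huxley--Jutila type (as referenced in \cite{Hux} and \cite{J}). Writing $N_\chi(u, T)$ for the number of zeros $\rho$ of $L(s, \chi)$ in $B_T$ with $\mathrm{Re}(\rho) \geq u$, Abel summation gives
$$
\sum_{\substack{\rho \in B_T \\ L(\rho, \chi) = 0}} (\xi X)^{\beta-1} = (\xi X)^{-1} N_\chi(0, T) + (\log \xi X) \int_0^1 (\xi X)^{u-1} N_\chi(u, T)\, du,
$$
with the analogous identity for $\sum'_\rho$ and the corresponding non-exceptional counting function $N'_\chi$. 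Inserting a hybrid density bound of the form $\sum_{r \le Q}\sideset{}{^*}\sum_\chi N_\chi(u, T) \ll (Q^2 T)^{(12/5)(1-u)}$ and setting $\theta := (12/5)(2\vartheta_0 + \gamma)$, the exponent of $X$ in the resulting integrand equals $-(1-u)(1-\theta)$. The hypothesis $2\vartheta_0 + \gamma < 5/12$ is precisely what makes $\theta < 1$, so that this integrand is concentrated near the upper endpoint of the range of integration.

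For part (i), Theorem~\ref{thm zero free} ensures that the integrand for the non-exceptional sum vanishes for $u > \sigma_0 := 1 - c_1/(\log M + (\mathfrak{L} \log 2\mathfrak{L})^{3/4})$, so the integral is $\ll X^{-(1-\sigma_0)(1-\theta)}/\log X$. With $M = X^{\vartheta_0}$, the term $\log M = \vartheta_0 \log X$ dominates the auxiliary $(\mathfrak{L} \log 2\mathfrak{L})^{3/4}$ for $X$ large, so $(1-\sigma_0)\log X \asymp 1/\vartheta_0$; consequently $X^{-(1-\sigma_0)(1-\theta)}$ is a bounded positive constant, and after absorbing the $\log X$ from Abel summation one obtains an $O_{\xi, \vartheta_0, \gamma}(1)$ bound, as required. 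The range $u \le 1/2$, where the density estimate may degrade, is handled separately using the trivial Riemann--von Mangoldt bound $N_\chi(0, T) \ll T \log(rT)$, exploiting that $X^{2\vartheta_0 + \gamma - 1/2} = o(1)$.

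For part (ii), Remark~\ref{remZFR} ensures that the exceptional modulus $\widetilde{r}$ exceeds $(\log X)^D$ for $X$ sufficiently large, so the sum over all zeros coincides with the sum over non-exceptional zeros and the same analysis applies with $M = (\log X)^D$. Now $\log M = D\log\log X$ is negligible compared to $(\mathfrak{L} \log 2\mathfrak{L})^{3/4} \asymp ((\log X)(\log\log X))^{3/4}$, so the zero-free region widens substantially to $1 - \sigma_0 \gg (\log X)^{-3/4}(\log\log X)^{-3/4}$. Hence
$$
X^{-(1-\sigma_0)(1-\theta)} \ll \exp\!\left(-c(1-\theta)\,(\log X)^{1/4}(\log\log X)^{-3/4}\right),
$$
which decays faster than any negative power of $\log X$ and easily absorbs the polylogarithmic contributions from the range $r \le (\log X)^D$ and from the density estimate, yielding the required $\ll_{A,D}(\log X)^{-A}$ bound.

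The principal technical obstacle is keeping track of logarithmic factors from the density estimate: in part (i) the constant-order saving $e^{-c_1(1-\theta)/\vartheta_0}$ cannot absorb arbitrary powers of $\log X$, so a sufficiently log-free density estimate (in the spirit of Huxley and Jutila) is essential; part (ii), by contrast, is much more forgiving because the sharply improved zero-free region for small conductors provides super-polynomial savings automatically.
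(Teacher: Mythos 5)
Your reconstruction correctly identifies the mechanism the paper defers to \cite{Y2}: Abel (Riemann--Stieltjes) summation against the zero-counting function, a hybrid zero-density estimate of Huxley--Jutila type with exponent $12/5$ (so that $\theta := (12/5)(2\vartheta_0+\gamma) < 1$ precisely when $2\vartheta_0+\gamma < 5/12$), and Theorem~\ref{thm zero free} to truncate the integral at $\sigma_0$. This matches the paper's citation of \cite{Hux} and \cite{J} as the main inputs, and the quantitative bookkeeping in part (i) (concentration near $u = \sigma_0$, the bound $X^{-(1-\sigma_0)(1-\theta)} \asymp e^{-c_1(1-\theta)/\vartheta_0}$, the $O(1/\log X)$ integral, and the trivial treatment of $u \le 1/2$) is sound. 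You are also right to flag that a genuinely log-free density theorem is what makes the constant-order saving viable; this is exactly why \cite{J} is cited.

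One point in part (ii) is stated too loosely. Remark~\ref{remZFR} guarantees that the exceptional modulus $\widetilde{r}$ arising from the $M = X^{\vartheta_0}$ application exceeds $(\log X)^D$; it does not by itself preclude a \emph{new} exceptional character from appearing when you re-apply Theorem~\ref{thm zero free} with $M = (\log X)^D$, which is what your "same analysis with $M = (\log X)^D$" step requires. The gap is easily closed: if such a character $\widetilde{\chi}_0$ of modulus $\widetilde{r}_0 \leq (\log X)^D$ did exist with a real zero $\widetilde{\beta}_0$ inside the wider zero-free region, Siegel's theorem would give $1 - \widetilde{\beta}_0 \gg_\varepsilon \widetilde{r}_0^{-\varepsilon} \gg_\varepsilon (\log X)^{-D\varepsilon}$, so $(\xi X)^{\widetilde{\beta}_0 - 1} \ll \exp(-c(\log X)^{1-D\varepsilon})$ decays super-polynomially in $\log X$ (and, choosing $\varepsilon < 3/(4D)$, in fact pushes any such zero outside the zero-free region entirely, which is what the paper's Remark~\ref{remZFR} is asserting). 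With that supplement your argument for part (ii) is complete.
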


Let $\phi$ be Euler's totient function.
Let $\mathbb{U}_q = (\mathbb{Z}/q \mathbb{Z})^*$, the group of units in $\mathbb{Z}/q \mathbb{Z}$.
The following lemma is proved in \cite[Lemma 7.4]{Y2}.
\begin{lem}
\label{exp sum modp}
Let $F \in \mathbb{Z}[x_1, \ldots, x_n]$ be a homogeneous form of degree $d \geq 2$.
Let $q \in \mathbb{N}$ and $a \in \mathbb{Z}$ be such that $\gcd(a,q)=1$.
Let $\chi_1, \ldots, \chi_n$ be any Dirichlet characters modulo $q$, and let
\begin{eqnarray}
\mathcal{S} = \sum_{\mathbf{h} \in (\mathbb{Z}/q \mathbb{Z})^n  } \chi_1(h_1) \cdots \chi_n(h_n)
\ e \left( \frac{a}{q} F(h_1, \ldots, h_n)  \right).
\end{eqnarray}
Then for any $\varepsilon > 0$ we have
$$
|\mathcal{S}| \ll q^{n - \frac{1}{2(2d-1) 4^d} \textnormal{codim} \tsp V_{F}^* + \varepsilon},
$$
where the implicit constant is independent of the choice of $\chi_1, \ldots, \chi_n$.
\end{lem}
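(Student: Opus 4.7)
The plan is to apply iterated Weyl differencing of Birch--Davenport type to $\mathcal{S}$, with the Dirichlet characters treated as bounded coefficients throughout. By the Chinese Remainder Theorem and the multiplicativity of the $\chi_i$ and of $e(a\cdot/q)$ across coprime factors of $q$, the sum $\mathcal{S}$ factorises over prime-power moduli, so one first reduces to the case $q = p^{e}$.

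Squaring $\mathcal{S}$ and iterating Cauchy--Schwarz together with the Weyl shift $\mathbf{h}' = \mathbf{h} + \u_j$, one differences the polynomial $aF$ a total of $d-1$ times. At each stage the character factor becomes an expression of the form $\prod_i \chi_i(h_i) \overline{\chi_i(h_i + u_{i,j})} \cdots$ of modulus at most $1$; since the $\chi_i$ are not additive characters of $\ZZ/q\ZZ$ in the $\mathbf{h}$ variable, they provide no additional cancellation and are simply bounded by $1$. After $d-1$ differencings the exponent is linear in $\mathbf{h}$, with coefficients essentially $d!\, F^{*}(\u_1, \ldots, \u_{d-1}; \cdot)$, the symmetric multilinear form associated to $F$. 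Summing trivially over $\mathbf{h}$ yields $q^n$ times the indicator that this linear form vanishes identically modulo $q$, and the number of tuples $(\u_1, \ldots, \u_{d-1}) \in (\ZZ/q\ZZ)^{(d-1)n}$ for which this occurs is controlled by $\textnormal{codim}\, V_F^*$ via the standard geometric counting lemma behind Birch's method. Combining this count with the powers of $q$ accumulated from the Cauchy--Schwarz steps and taking a $2^{d-1}$-th root produces the claimed estimate, with the implicit constant depending only on $F$ (in particular, uniform in the choice of characters because $\chi_i$ was absorbed trivially).

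The main obstacle is precisely this loss from the characters: because the $\chi_i$ cannot be folded into the additive oscillation and must be discarded via $|\chi_i| \leq 1$, one effectively pays for an extra squaring step relative to the untwisted case. This is what produces the factor $4^d = 2^{2d}$ in the denominator of $\frac{1}{2(2d-1) 4^d}$ (as opposed to the $2^{d-1}$ that would suffice for a pure additive sum), while the remaining factor $2(2d-1)$ arises from the bookkeeping in Birch's counting lemma applied to $F^{*}$. Executing this bookkeeping cleanly while keeping the exponent explicit and uniform in $\chi_1, \ldots, \chi_n$ is the main technical point.
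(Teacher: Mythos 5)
There is a genuine gap, and it is at the very core of your argument. You propose to apply Weyl/Cauchy--Schwarz differencing directly to $\mathcal{S}$, bounding the Dirichlet character factors trivially by $|\chi_i|\le 1$ along the way, and then to invoke the Birch counting lemma on the resulting linear exponential in $\mathbf{h}$. But these two steps are incompatible. After $d-1$ differencings, the summand is (schematically) $\bigl(\prod_i\chi_i(h_i)\,\overline{\chi_i}(h_i+u_{i,1})\cdots\bigr)\,e(\text{linear form in }\mathbf{h})$. If you discard the character factor by taking absolute values \emph{before} summing over $\mathbf{h}$, you must also discard the exponential, and the $\mathbf{h}$-sum then gives $q^n$ unconditionally --- no ``indicator that the linear form vanishes,'' and hence no input for the geometry-of-numbers counting. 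If instead you keep the character factor inside the $\mathbf{h}$-sum in order to preserve the $e(\cdot)$-cancellation, the product of characters genuinely depends on $\mathbf{h}$ (it is multiplicative, not additive, so it does not telescope under the Weyl shift) and destroys the orthogonality that the counting lemma requires. Either way, your differencing chain does not close.

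The missing idea is to \emph{detach} the characters from the summation variable before differencing, by exploiting multiplicativity rather than trying to survive it. Since $\chi_i(h_i)$ vanishes unless $h_i\in\mathbb{U}_q$, one restricts to units, substitutes $h_i = u_i v_i$ with $u_i, v_i \in \mathbb{U}_q$ (averaging over $u_i$ with weight $\phi(q)^{-n}$), and uses $\chi_i(u_i v_i) = \chi_i(u_i)\chi_i(v_i)$. The characters then become fixed, bounded coefficients depending on $u_i$ and $v_i$ separately, while the additive oscillation becomes $e\bigl(\tfrac{a}{q}F(u_1v_1,\ldots,u_nv_n)\bigr)$, a bihomogeneous form of bidegree $(d,d)$. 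One can now run Weyl differencing in $\mathbf{u}$ and $\mathbf{v}$ (as in Schindler's bihomogeneous version of Birch), obtain a counting problem controlled by $\textnormal{codim}\,V^*_{\mathcal{G},2}$ for $\mathcal{G}(\mathbf{u};\mathbf{v})=F(u_1v_1,\ldots,u_nv_n)$, and finally transfer back to $\textnormal{codim}\,V^*_F$ via a structural lemma of the type of Proposition~\ref{semiprimeprop} in this paper (here without the dichotomy assumption, hence the weaker constant of \cite[Theorem~5.1]{Y1}). This bihomogeneous detour is exactly what produces the $4^d$ in place of $2^{d-1}$: not ``one extra squaring'' as you suggest, but an entirely different differencing geometry of total degree $2d$ in $(\mathbf{u},\mathbf{v})$, together with the $(2d-1)$ from the Birch-type counting for bidegree $(d,d)$. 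Your accounting of the constants therefore does not match the actual shape of the proof.
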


The following proposition on oscillatory integrals is proved as the main result in \cite{Y}.
\begin{prop}
\label{prop osc int}
Let $F \in \mathbb{Z}[x_1, \ldots, x_n]$ be a homogeneous form of degree $d \geq 2$ satisfying $n - \dim V_{F}^* > 4$.
Let $r_j \in [-1, 0]$ and $t_j \in \mathbb{R}$ $(1 \leq j \leq n)$.
Suppose $\mathbf{x}_0 = (x_{0,1}, \ldots, x_{0,n}) \in (0,1)^n$ is a non-singular real solution to the equation $F(\mathbf{x}) = 0$.
Let $\omega \in \mathcal{S}^+(\delta; n; \mathfrak{c})$.
Then provided $\delta$ is sufficiently small, we have
$$
\Big{|} \int_{0}^{\infty} \cdots \int_{0}^{\infty}
\prod_{1 \leq j \leq n} \omega(x_{j} - x_{0, j}) \cdot x_1^{r_1 + i t_1} \cdots x_n^{r_n + i t_n} \tsp  e( \tau F(\mathbf{x}))  \tsp  d \mathbf{x}  \Big{|} \ll \min \{ 1,
|\tau|^{-1} \},
$$
where the implicit constant is independent of $r_1, \ldots, r_n$,
$t_1, \ldots, t_n$ and $\tau$.
\end{prop}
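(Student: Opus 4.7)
\emph{Plan.} The proof splits into the trivial bound $|I| \ll 1$ and the decay bound $|I| \ll |\tau|^{-1}$ for $|\tau| \geq 1$; together these yield the claimed $\min\{1, |\tau|^{-1}\}$ estimate.

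For the trivial bound, note that the integrand is supported on the compact box $\prod_j [x_{0,j} - \delta, x_{0,j} + \delta]$, which lies inside $(0,1)^n$ once $\delta$ is small enough relative to $\mathbf{x}_0$. On this box every $x_j$ is bounded above and below by positive constants, so $|x_j^{r_j+it_j}| = x_j^{r_j}$ is uniformly bounded for $r_j \in [-1,0]$. Combined with $|\omega| \leq \mathfrak{c}$ and the volume of the support, this gives $|I| \ll 1$ independently of $\mathbf{r}, \mathbf{t}, \tau$.

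For the decay bound, my approach is to recast the integrand so that all oscillation lives in a single phase. Passing to coordinates $\mathbf{y} = (\log x_1, \ldots, \log x_n)$, one obtains
\[
I = \int g(\mathbf{y}) \, e^{i \Phi(\mathbf{y})} \, d\mathbf{y}, \qquad \Phi(\mathbf{y}) = 2\pi \tau F(e^\mathbf{y}) + \sum_j t_j y_j,
\]
where $g(\mathbf{y}) = \prod_j \omega(e^{y_j} - x_{0,j}) \, e^{(r_j + 1) y_j}$ is smooth and compactly supported, with $C^k$ norms uniform in $\mathbf{r} \in [-1,0]^n$. The $k$-th component of $\nabla \Phi$ is $2\pi \tau x_k \partial_k F(\mathbf{x}) + t_k$, so at a critical point one has $t_k = -2\pi \tau x_k \partial_k F(\mathbf{x})$, and $\mathrm{Hess}(\Phi) = 2\pi \tau \, \mathcal{H}(\mathbf{x})$ with $\mathcal{H}$ depending only on $F$ and bounded on the support.

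I would then use a smooth partition of unity to split the $\mathbf{y}$-integral into a non-stationary region, where $|\nabla \Phi| \gg |\tau|$, and a neighborhood of the critical set. On the non-stationary region, integration by parts against the vector field $\nabla \Phi / |\nabla \Phi|^2$ absorbs the factor $e^{i\Phi}$ and produces a bound $\ll |\tau|^{-1}$, because the derivatives of $g$ are uniformly bounded. On the complementary region, stationary-phase asymptotics give a contribution of order $|\tau|^{-n/2}$, which is $\ll |\tau|^{-1}$ as soon as $n \geq 2$. The main obstacle is to verify the non-degeneracy of $\mathcal{H}$ and to control the cut-off uniformly in $(\mathbf{r}, \mathbf{t})$: this is where the hypothesis $n - \dim V_F^* > 4$ enters, guaranteeing enough non-vanishing curvature of $F$ near the non-singular point $\mathbf{x}_0$ for the stationary-phase estimate to be robust.
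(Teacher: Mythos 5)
The paper does not actually prove this proposition; it invokes it as the main theorem of the author's separate work \cite{Y} (a lengthy paper devoted to exactly this estimate), so there is no in-paper argument to compare against. Evaluating your proposal on its own terms, however, there is a genuine gap at the stationary-phase step.

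The crux is your claim that the hypothesis $n - \dim V_F^* > 4$ gives enough non-vanishing curvature to run stationary phase near the critical set. This is not true, and in fact the claim is internally inconsistent: if the Hessian of $\Phi$ in the $\mathbf{y}$-variables were non-degenerate near $\mathbf{x}_0$, stationary phase would already give $|\tau|^{-n/2}$, and the only condition you would need is $n \geq 2$; the codimension hypothesis would play no role. The codimension of $V_F^*$ controls the rank of $\nabla F$ over the variety, not the rank of the second-order data of $F$ at a smooth point. For a concrete counterexample, take $F = x_1^2 + x_2^2 + x_3^2 + x_4^2 - x_5^2$ viewed as a form in $n \gg 5$ variables. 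Then $V_F^* = \{x_1 = \cdots = x_5 = 0\}$, so $n - \dim V_F^* = 5 > 4$, and $\mathbf{x}_0 = (\tfrac14,\tfrac14,\tfrac14,\tfrac14,\tfrac12, *, \dots, *) \in (0,1)^n$ is a non-singular zero; but a direct computation shows that the Hessian of $\tau F(e^{\mathbf{y}})$ is diagonal with entries $\pm 4\tau x_j^2$ for $j \leq 5$ and $0$ for $j > 5$, hence has rank $5$ on the whole support, far below $n$. So one cannot apply classical non-degenerate stationary phase in the $n$ variables; one is forced into a degenerate/sub-level-set analysis, and that is exactly where the uniformity in $\mathbf{t}$ becomes delicate, since the location and geometry of the critical set depend on $\mathbf{t}$. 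Your trivial bound $|I| \ll 1$ and the integration-by-parts bound on the region $|\nabla\Phi| \gg |\tau|$ are both fine (with the usual care about derivatives of the cut-off producing extra factors, which are harmless after one integration by parts), but the contribution of the nearly-stationary region is the real content of the proposition and is not handled by the proposed argument. A correct proof needs a quantitative estimate in the spirit of van der Corput or of Birch-type oscillatory-integral bounds (cf.\ the treatment of singular integrals in \cite{BP} and \cite{SS}) adapted to carry the uniformity in $(\mathbf{r},\mathbf{t})$; this is what \cite{Y} develops at length.
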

The key feature of the result is that the bound is uniform in $\mathbf{t}$; the estimate can be deduced easily for a fixed $\mathbf{t} \in \mathbb{R}^n$.

\section{Major arcs}
\label{major}
Let $F \in \mathbb{Z}[x_1, \ldots, x_n]$ be a homogeneous form of degree $d \geq 2$ satisfying (\ref{codim of F}) and the local conditions \textnormal{($\star$)}. Let
$\mathbf{x}_0$, $\delta$, $\omega$ and $\chi_1, \ldots, \chi_n$ be as in the statement of Theorem \ref{mainthm}.
Let
\begin{eqnarray}
\label{defnoflamdastar}
\Lambda^*(x) =
\left\{
    \begin{array}{ll}
         \log x
         &\mbox{if } x \in \wp, \\
         0
         &\mbox{otherwise. }
    \end{array}
\right.
\end{eqnarray}
We define
\begin{eqnarray}
S^*(\alpha) = \sum_{\mathbf{x} \in \mathbb{N}^{n} }  \varpi(\x) \prod_{1 \leq i \leq n} \Lambda^*(x_i) \cdot  e ( \alpha  F(\mathbf{x})  ),
\end{eqnarray}
where
\begin{equation}
\label{defn varp}
\varpi(\x) = \prod_{1 \leq i \leq n} \psi_i(x_i) = \prod_{1 \leq i \leq n} \omega \left( \frac{x_i}{X} - x_{0,i}  \right).
\end{equation}
Clearly we have
\begin{eqnarray}
\label{S and S1}
S(\alpha) = S^*(\alpha) + O(X^{n - \frac12}).
\end{eqnarray}

We define
\begin{equation}
\label{defn kap}
\kappa = \frac{\textnormal{codim} \tsp V_F^*}{2(2d-1)4^d} - 1 - \kappa_0,
\end{equation}
where $\kappa_0 > 0$ is sufficiently small.
In particular, we have $\kappa > 2$.
Let $\vartheta_0, \gamma, \lambda > 0$ be such that
\begin{eqnarray}
\label{theta0 major}
2 \vartheta_0 + \gamma < \frac{5}{12}
\quad
\textnormal{ and }
\quad
\gamma > 2 \vartheta_0 + 2 \lambda.
\end{eqnarray}
We set
\begin{eqnarray}
\label{defnT}
M = X^{\vartheta_0} \quad  \textnormal{ and } \quad T = X^{\gamma},
\end{eqnarray}
and let $\widetilde{\chi}$, $\widetilde{r}$ and $\widetilde{\beta}$ be as in Theorem \ref{thm zero free}.
We define the following extended major arcs
\begin{equation}
\label{defn major}
\mathfrak{M}^+(\vartheta_0) = \bigcup_{1 \leq q \leq X^{\vartheta_0}} \bigcup_{\substack{ 0 \leq a \leq q  \\ \gcd(a, q) = 1}} \mathfrak{M}^+_{q,a}(\vartheta_0),
\end{equation}
where
$$
\mathfrak{M}^+_{q, a} (\vartheta_0) = \left\{ {\alpha} \in [0,1) :  \Big{|}  \alpha - \frac{a}{q}  \Big{|} <  X^{\vartheta_0 + \lambda  - d} \right\}.
$$
It can be verified that the arcs $\mathfrak{M}^+_{q, a}(\vartheta_0)$ are disjoint for $X$ sufficiently large (see for example \cite[Lemma 4.1]{Bir} for the argument).
In this section, we prove the following.
\begin{prop}
\label{prop major}
Suppose $F$ is as in the statement of Theorem \ref{mainthm}.
Let $\vartheta_0, \gamma, \lambda > 0$ be such that (\ref{theta0 major}) is satisfied.
Then for any $A > 0$ we have
$$
\int_{\mathfrak{M}^+(\vartheta_0)} S(\alpha) \tsp  d \alpha = c(F; \omega, \mathbf{x}_0) \, X^{n-d} + O \left(  \frac{X^{n-d}}{(\log X)^A} \right),
$$
where $c(F; \omega, \mathbf{x}_0) > 0$ is a constant depending only on $F$, $\omega$ and $\x_0$.
\end{prop}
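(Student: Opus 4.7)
The plan is to follow the standard major-arcs template: separate the arithmetic and analytic information via Dirichlet characters, then apply the explicit formula (via the identity $\Lambda=\mu*\log$ together with Mellin inversion) to each analytic factor, and finally bound the residual zero sum using the uniform oscillatory estimate of Proposition \ref{prop osc int} together with the zero-sum bound of Lemma \ref{Gbdd}. The preliminary move is to replace $S$ by $S^*$ using (\ref{S and S1}); this contributes at most $X^{n-\frac12}\cdot O(X^{3\vartheta_0+\lambda-d})$, which is acceptable under (\ref{theta0 major}). From here on I focus on $\int_{\mathfrak{M}^+(\vartheta_0)} S^*(\alpha)\tsp d\alpha$.

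On each piece $\mathfrak{M}^+_{q,a}(\vartheta_0)$ I would write $\alpha = a/q+\beta$ with $|\beta|<X^{\vartheta_0+\lambda-d}$, sort $\x$ into residue classes $\mathbf{h}\in \mathbb{U}_q^n$ modulo $q$ (residues with $\gcd(h_i,q)>1$ contribute only primes $p\mid q$ and are negligible), and use character orthogonality to obtain, up to a negligible error,
\begin{equation*}
S^*(a/q+\beta) = \frac{1}{\phi(q)^n}\sum_{\chi_1,\dots,\chi_n \tsp (\textnormal{mod} \tsp q)} \overline{\mathcal{S}(\chi_1,\dots,\chi_n;a,q)}\cdot T(\beta;\chi_1,\dots,\chi_n),
\end{equation*}
where $\mathcal{S}$ is the exponential sum of Lemma \ref{exp sum modp} and
\begin{equation*}
T(\beta;\chi_1,\dots,\chi_n) = \sum_{\x\in\mathbb{N}^n}\varpi(\x)\prod_{i=1}^n \chi_i(x_i)\Lambda^*(x_i)\cdot e(\beta F(\x)).
\end{equation*}
After reducing each $\chi_i$ to its inducing primitive character, I would represent $T$ analytically by Mellin inversion in each coordinate: writing the inner $\Lambda^*$-weighted factor in $x_i$ as a contour integral of $-L'/L(s_i,\chi_i)$ against the Mellin transform of the smooth factor $\varpi(\x)\,e(\beta F(\x))$, and shifting contours across $\textnormal{Re}(s_i)=1$, one picks up the pole at $s_i=1$ for trivial $\chi_i$ and residues at the non-exceptional nontrivial zeros inside the zero-free region of Theorem \ref{thm zero free}.

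The tuple in which every $\chi_i$ is trivial and no zero is picked up reassembles, after integration in $\beta$ and summation over $a,q$, into the main term $c(F;\omega,\x_0)X^{n-d}$: the $a,q$-sum produces the singular series, and the $\beta$-integral completes to $\mathbb{R}$ yielding the singular integral. Every remaining tuple produces a contribution of the form
\begin{equation*}
\frac{|\mathcal{S}(\chi_1,\dots,\chi_n)|}{\phi(q)^n}\sum_{\rho_{i_1},\dots,\rho_{i_k}} \Bigl|\int\varpi(\x)\prod_j x_{i_j}^{\rho_{i_j}-1}\prod_{i\notin\{i_j\}} x_i^{-1+it_i}\cdot e(\beta F(\x))\,d\x\Bigr|,
\end{equation*}
which I would bound by combining the power-in-$q$ saving of Lemma \ref{exp sum modp} (enabled by the codimension hypothesis (\ref{codim of F})), the uniform oscillatory bound $\min\{1,|\beta|^{-1}\}$ of Proposition \ref{prop osc int}, and the zero-sum estimates of Lemma \ref{Gbdd}. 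Should the exceptional zero occur, Remark \ref{remZFR} gives $\widetilde{r}\gg_A (\log X)^A$, which paired with Lemma \ref{exp sum modp} supplies the necessary logarithmic saving.

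\textbf{Principal obstacle.} The delicate point is the simultaneous control of three summations: over the $O(X^{2\vartheta_0})$ arc centres $a/q$, over the $\beta$-length $X^{\vartheta_0+\lambda-d}$ of each arc, and over the zeros $\rho$ of $L(s,\chi)$ with $|\textnormal{Im}\,\rho|\leq T=X^\gamma$. The imaginary parts $t_{i_j}=\textnormal{Im}\,\rho_{i_j}$ enter the integrand as exponents coupled through $F(\x)$, and so cannot be handled coordinate-by-coordinate; the uniformity in $\mathbf{t}$ of Proposition \ref{prop osc int} is exactly what makes the combination with Lemma \ref{Gbdd} viable. Balancing this against the codimension-driven saving of Lemma \ref{exp sum modp} is what ultimately yields the claimed $(\log X)^{-A}$ error.
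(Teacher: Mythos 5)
Your sketch follows essentially the same route as the paper: orthogonality and reduction to primitive characters, followed by the (Mellin/contour-shift form of the) truncated explicit formula, with the resulting zero sums controlled by the uniform oscillatory bound of Proposition \ref{prop osc int}, Lemma \ref{Gbdd}, the arithmetic saving in $q$ from Lemma \ref{exp sum modp}, and Remark \ref{remZFR} for the exceptional zero. The only discrepancy is cosmetic: coordinates whose analytic factor picked up only the pole at $s_i=1$ should contribute the constant factor $1$ (exponent $r_i=t_i=0$) rather than $x_i^{-1+it_i}$, but since Proposition \ref{prop osc int} is uniform over $r_i\in[-1,0]$ and $t_i\in\mathbb{R}$ this does not affect the argument.
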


Let $m, \ell \in \mathbb{Z}_{\geq 0}$ be such that $m + \ell \leq n$. We denote
$\mathbf{j} = (j_1, \ldots, j_m)$, $\mathbf{k} = (k_1, \ldots, k_{\ell})$ and $\mathbf{i} = (i_1, \ldots, i_{n - m - \ell})$
satisfying
\begin{eqnarray}
\label{cond on j and k}
\{ 1, \ldots, n \} = \{ i_1, \ldots, i_{n - m - \ell} \} \cup  \{ j_1, \ldots, j_m \} \cup \{ k_1, \ldots, k_{\ell} \}.
\end{eqnarray}
For each such triple $(\mathbf{i}, \mathbf{j}, \mathbf{k})$ and Dirichlet characters $\chi_1, \ldots, \chi_m$ modulo $q$, we define
\begin{eqnarray}
&&\mathcal{A}(q, a; \mathbf{i}; (j_1, \chi_1), \ldots,  (j_m, \chi_m); \mathbf{k} )
\\
&=& \sum_{\substack{ \mathbf{h} \in \mathbb{U}_q^n } } \
\overline{\chi_1}(h_{j_1}) \cdots \overline{\chi_m}(h_{j_m}) \tsp
\overline{\widetilde{\chi} \chi^0 }(h_{k_1}) \cdots \overline{\widetilde{\chi} \chi^0} (h_{k_{\ell}})
\ e \left( \frac{a}{q}  F(\mathbf{h} )   \right);
\notag
\end{eqnarray}
if $\widetilde{r} \nmid q$, then we only consider $(\mathbf{i}, \mathbf{j}, \mathbf{k})$  with $\mathbf{k} = \emptyset$, i.e. $\ell = 0$.
We also define
\begin{eqnarray}
\label{DefnW}
&&\mathcal{W}(\tau; \mathbf{i};  (j_1, \chi_1), \ldots,  (j_m, \chi_m); \mathbf{k}  )
\\
&=&
\int_0^{\infty} \cdots \int_0^{\infty}
\sum_{x_{j_1}, \ldots, x_{j_m} \in \mathbb{N}} \varpi(\mathbf{x})
\prod_{1 \leq v \leq m} \chi_v (x_{j_v}) \Lambda^*(x_{j_v})
\cdot
\notag
\\
&& x_{k_1}^{\widetilde{\beta} - 1} \cdots x_{k_{\ell}}^{\widetilde{\beta} - 1}
\tsp e ( \tau F(\mathbf{x})  ) \tsp  d x_{i_1} \cdots d x_{i_{n - m - \ell}} d x_{k_1} \cdots d x_{k_{\ell}},
\notag
\end{eqnarray}
where for each $1 \leq v \leq m$ we replace $\chi_v(x_{j_v}) \Lambda^*(x_{j_v})$ with $\left( \chi^0(x_{j_v}) \Lambda^*(x_{j_v}) - 1 \right)$ when $\chi_{v} = \chi^0$, and
with $\left( \widetilde{\chi} \chi^0(x_{j_v}) \Lambda^*(x_{j_v}) + x_{j_v}^{\widetilde{\beta} - 1} \right)$ when $\chi_v = \widetilde{\chi} \chi^0$.
We extend this definition to allow each $\chi_v$ to be a primitive character modulo $r_v$ as follows
\begin{eqnarray}
\label{DefnW2}
&&\mathcal{W}(\tau; \mathbf{i};  (j_1, \chi_1), \ldots,  (j_m, \chi_m); \mathbf{k}  )
\\
&=&
\int_0^{\infty} \cdots \int_0^{\infty}
\sum_{x_{j_1}, \ldots, x_{j_m} \in \mathbb{N}} \varpi(\mathbf{x})
\prod_{1 \leq v \leq m} \chi_v (x_{j_v}) \Lambda^*(x_{j_v})
\cdot
\notag
\\
&& x_{k_1}^{\widetilde{\beta} - 1} \cdots x_{k_{\ell}}^{\widetilde{\beta} - 1}
\tsp e ( \tau F(\mathbf{x})  ) \tsp  d x_{i_1} \cdots d x_{i_{n - m - \ell}} d x_{k_1} \cdots d x_{k_{\ell}},
\notag
\end{eqnarray}
where for each $1 \leq v \leq m$ we replace $\chi_v(x_{j_v}) \Lambda^*(x_{j_v})$ with $\left( \Lambda^*(x_{j_v}) - 1 \right)$ when $r_{v} = 1$, and
with $\left( \widetilde{\chi} (x_{j_v})  \Lambda^*(x_{j_v}) + x_{j_v}^{\widetilde{\beta} - 1} \right)$ when $\chi_v = \widetilde{\chi}$.
With these notation we have the following technical estimate; the result is essentially \cite[Lemma 8.1]{Y2}
except that the range of $\tau$ is $|\tau| < X^{\vartheta_0 + \lambda - d}$ instead of $|\tau| < X^{\vartheta_0 - d}$.
\begin{lem}
\label{first sum lemma}
Let $\alpha \in [0,1)$, where $\alpha = \frac{a}{q} + {\tau}$, $0 \leq a \leq q \leq X^{\vartheta_0}$, $\gcd(a,q) = 1$
and $|\tau| < X^{\vartheta_0 + \lambda - d}$.
Then we have
\begin{eqnarray}
\label{first sum}
S^*(\alpha) &=& \frac{1}{\phi(q)^n} \tsp  \mathcal{A} (q, a; (1, \ldots, n) ; \emptyset ;  \emptyset ) \tsp \mathcal{W}(\tau;(1, \ldots, n) ;  \emptyset ; \emptyset )
\\
\notag
&+& \sum_{(\mathbf{j}, \mathbf{k}) } \frac{(-1)^{\ell}}{\phi(q)^n} \tsp S_{\mathbf{j}, \mathbf{k}}(\alpha)
+ O (  X^{n + \vartheta_0 + \lambda -1} ),
\end{eqnarray}
where
\begin{eqnarray}
S_{\mathbf{j}, \mathbf{k}}(\alpha) =  \sum_{\chi_1, \ldots, \chi_m (\textnormal{mod} \tsp q)}
\mathcal{A}(q, a; \mathbf{i} ;  (j_1, \chi_1), \ldots,  (j_m, \chi_m); \mathbf{k} ) \tsp  \mathcal{W}(\tau; \mathbf{i};  (j_1, \chi_1), \ldots,  (j_m, \chi_m); \mathbf{k} ),
\notag
\end{eqnarray}
and the sum  $\sum_{(\mathbf{j}, \mathbf{k}) }$ in (\ref{first sum}) is over all $(\mathbf{j}, \mathbf{k})$ satisfying $(m, \ell) \not = (0,0)$
and (\ref{cond on j and k}) with an additional condition $\ell = 0$ if $\widetilde{r} \nmid q$ or
the exceptional zero does not exist.
\end{lem}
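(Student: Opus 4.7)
The plan is to perform a Dirichlet character decomposition of each $\Lambda^*(x_i)$ and then split each character-twisted factor into a \emph{main} plus \emph{oscillating} piece. First, since $\varpi$ restricts each $x_i$ to an interval of size $\asymp X$ around $Xx_{0,i}$, and $\Lambda^*(x_i)\neq 0$ forces $x_i$ prime, for $X$ sufficiently large $x_i>q$ throughout the support, so $(x_i,q)=1$ automatically. Writing $\alpha = a/q + \tau$ so that $e(\alpha F(\x))=e(\tfrac{a}{q}F(\x))\,e(\tau F(\x))$, I would group variables by residues $h_i\equiv x_i\,(\textnormal{mod}\, q)$ and apply the character orthogonality relation $\mathbbm{1}_{x\equiv h\,(\textnormal{mod}\, q)}=\phi(q)^{-1}\sum_{\chi\,(\textnormal{mod}\, q)}\chi(x)\overline{\chi}(h)$ in each coordinate to obtain
\[
S^*(\alpha)\,=\,\frac{1}{\phi(q)^n}\sum_{\chi_1,\ldots,\chi_n\,(\textnormal{mod}\, q)}\Big(\sum_{\mathbf{h}\in\mathbb{U}_q^n}\overline{\chi_1}(h_1)\cdots\overline{\chi_n}(h_n)\,e(\tfrac{a}{q}F(\mathbf{h}))\Big)\,T(\tau;\chi_1,\ldots,\chi_n),
\]
where $T(\tau;\chi_1,\ldots,\chi_n)=\sum_{\x\in\NN^n}\varpi(\x)\prod_i\chi_i(x_i)\Lambda^*(x_i)\,e(\tau F(\x))$.

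Next, for each $i$ I would split $\chi_i(x_i)\Lambda^*(x_i)=M_i+O_i$, where the main piece $M_i$ equals $1$ when $\chi_i=\chi^0$, equals $-x_i^{\widetilde{\beta}-1}$ when $\chi_i=\widetilde{\chi}\chi^0$, and vanishes otherwise; these isolate, respectively, the pole of $L(s,\chi^0)$ at $s=1$ and the exceptional zero of $L(s,\widetilde{\chi})$. Expanding $\prod_i(M_i+O_i)$ and retaining only nonzero summands forces, on every index where $M_i$ is chosen, the constraint $\chi_i\in\{\chi^0,\widetilde{\chi}\chi^0\}$; this organizes the expansion by partitions $\{1,\ldots,n\}=\mathbf{i}\sqcup\mathbf{j}\sqcup\mathbf{k}$, where $\mathbf{j}$ indexes the $O_i$-choices (with $\chi_i$ summed freely), $\mathbf{i}$ the $M_i=1$ choices (with $\chi_i=\chi^0$), and $\mathbf{k}$ the $M_i=-x_i^{\widetilde{\beta}-1}$ choices (with $\chi_i=\widetilde{\chi}\chi^0$, requiring $\widetilde{r}\mid q$ and the exceptional zero to exist). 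Summing over $\mathbf{h}$ and over the free characters $\chi_{j_1},\ldots,\chi_{j_m}$ produces exactly $\mathcal{A}(q,a;\mathbf{i};(j_1,\chi_1),\ldots,(j_m,\chi_m);\mathbf{k})$, while
\[
(-1)^\ell\sum_{\x}\varpi(\x)\prod_{i\in\mathbf{k}}x_i^{\widetilde{\beta}-1}\prod_{i\in\mathbf{j}}O_i(x_i;\chi_i)\,e(\tau F(\x)),
\]
after the sums over $x_i$, $i\in\mathbf{i}\cup\mathbf{k}$, are converted into integrals, matches $(-1)^\ell\mathcal{W}(\tau;\mathbf{i};(j_1,\chi_1),\ldots,(j_m,\chi_m);\mathbf{k})$, since the substitution rules in \eqref{DefnW} are precisely $O_i=\chi_i\Lambda^*-M_i$ for $\chi_i\in\{\chi^0,\widetilde{\chi}\chi^0\}$. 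The partition $(\mathbf{j},\mathbf{k})=(\emptyset,\emptyset)$ (so $\mathbf{i}=\{1,\ldots,n\}$) gives the stated leading term $\phi(q)^{-n}\mathcal{A}(q,a;(1,\ldots,n);\emptyset;\emptyset)\,\mathcal{W}(\tau;(1,\ldots,n);\emptyset;\emptyset)$, and the remaining partitions contribute $\sum_{(\mathbf{j},\mathbf{k})}\phi(q)^{-n}(-1)^\ell S_{\mathbf{j},\mathbf{k}}(\alpha)$.

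Finally, I would bound the sum-to-integral error. For each $x_i$ with $i\in\mathbf{i}\cup\mathbf{k}$ the integrand is smooth in $x_i$ (since $M_i\in\{1,-x_i^{\widetilde{\beta}-1}\}$ carries no $\Lambda^*$), so Poisson summation (or Euler--Maclaurin) in that variable expresses the defect $\sum-\int$ via Fourier coefficients of $\omega(x_i/X-x_{0,i})\,x_i^{\widetilde{\beta}-1}\,e(\tau F(\x))$ at nonzero frequencies. Using $|\tau\partial_{x_i}F(\x)|\ll X^{\vartheta_0+\lambda-1}$ on the support of $\varpi$, the derivative bounds supplied by $\omega\in\mathcal{S}^+(\delta;n;\mathfrak{c})$, and repeated integration by parts, these coefficients decay rapidly; combined with the trivial bounds for the remaining factors this yields the stated overall error $O(X^{n+\vartheta_0+\lambda-1})$. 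The adaptation from \cite[Lemma~8.1]{Y2} is essentially that $|\tau|$ is allowed larger by a factor $X^\lambda$, which simply rescales this error by the same factor. The main obstacle lies here: making the Poisson estimate uniform in $\mathbf{h}$, in the $\phi(q)^n$ character tuples, and in the partition $(\mathbf{i},\mathbf{j},\mathbf{k})$, and tracking the $(-1)^\ell$ signs arising from each $M_i=-x_i^{\widetilde{\beta}-1}$ correctly, so that the final error matches the stated bound after the $\phi(q)^{-n}$ normalization.
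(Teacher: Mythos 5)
Your proposal is correct and follows essentially the same route as the paper: apply character orthogonality in each coordinate, split each $\chi_i(x_i)\Lambda^*(x_i)$ into a main piece ($1$ for $\chi^0$, $-x_i^{\widetilde\beta-1}$ for $\widetilde\chi\chi^0$) and an oscillating remainder, organize by partitions $(\mathbf i,\mathbf j,\mathbf k)$, and convert the sums over the main-piece variables to integrals. The only cosmetic deviation is that you invoke Poisson summation / integration by parts for the sum-to-integral step, whereas the paper uses a simpler first-order Euler--Maclaurin/mean-value-theorem bound per unit interval (Lemma~\ref{Lemma 2.22}) and then controls the accumulated error via orthogonality and the prime number theorem to land on $O(X^{n+\vartheta_0+\lambda-1})$; your version would work but is more machinery than is needed.
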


\begin{proof}
Here we only consider the case where the exceptional zero does exist and $\widetilde{r}|q$; the proof for the cases
$\widetilde{r} \nmid q$ or the exceptional zero does not exist
are identical to this case with only slight modifications. First note if $x_u \in \wp \cap [(x_{0,u} - \delta)X, (x_{0,u} + \delta) X]$, then $\gcd(x_u, q) =1$ because $q
\leq X^{\vartheta_0}$. Therefore, we obtain the following via the orthogonality relation of
the Dirichlet characters
\begin{eqnarray}
\label{S1}
S^*(\alpha) &=& \frac{1}{\phi(q)^n} \sum_{\chi_1, \ldots, \chi_n (\textnormal{mod} \tsp q)} \
\sum_{\mathbf{h} \in \mathbb{U}_q^n} \overline{\chi_1}(h_1) \cdots \overline{\chi_n}(h_n) \  e \left( \frac{a}{q} F(\mathbf{h}) \right) \cdot
\\
&&
\notag
\sum_{\mathbf{x} \in \mathbb{N}^n} \varpi(\mathbf{x}) \prod_{1 \leq u \leq n} \chi_u(x_u) \Lambda^*(x_u) \cdot  e ( \tau F(\mathbf{x}) ).
\end{eqnarray}

Let $\mathbf{i}' = (i'_1, \ldots, i'_{s'})$, $\mathbf{j}' = (j'_1, \ldots, j'_{m'})$, $\mathbf{k}' = (k'_1, \ldots, k'_{\ell'})$,
$\mathbf{k} = (k_1, \ldots, k_{\ell})$ and $\mathbf{i} = (i_1, \ldots, i_{n - s' - m' - \ell' - \ell})$, where
\begin{eqnarray}
\label{paritionnn}
 \{1, \ldots, n\} &=& \{i_1, \ldots, i_{n - s' - m' - \ell' - \ell} \} \cup \{ i'_1, \ldots, i'_{s'} \} \cup  \{ j'_1, \ldots, j'_{m'} \}
\\
\notag
&\cup&  \{ k'_1, \ldots, k'_{\ell'} \} \cup  \{ k_1, \ldots, k_{\ell} \}.
\end{eqnarray}
We remark that $s', m', \ell', \ell$ and $n - s' - m' - \ell' - \ell$ are allowed to be $0$.
Now we break up the summands of the inner sum $\sum_{\mathbf{x} \in \mathbb{N}^n}$ 
using the identities
$$
\chi^0(x_{u}) \Lambda^*(x_{u}) = \left( \chi^0(x_{u}) \Lambda^*(x_{u}) - 1 \right) + 1
$$
when $\chi_u = \chi^0$, and
$$
\widetilde{\chi} \chi^0(x_{u}) \Lambda^*(x_{u})  = \left( \widetilde{\chi} \chi^0(x_{u}) \Lambda^*(x_{u}) + x_u^{\widetilde{\beta} - 1} \right) - x_u^{\widetilde{\beta} - 1}
$$
when $\chi_u = \widetilde{\chi} \chi^0$ (If $ \widetilde{r} \nmid q$ or the exceptional zero does not exist, then we simply ignore this second identity.); we do this for each
$1 \leq u \leq n$ and obtain
\begin{eqnarray}
\label{S1 small}
&&\sum_{\mathbf{x} \in \mathbb{N}^n} \varpi(\mathbf{x}) \prod_{1 \leq u \leq n} \chi_u(x_u) \Lambda^*(x_u) \cdot  e ( \tau F(\mathbf{x}) )
\\
\notag
&=& \sum_{ (\mathbf{i}, \mathbf{i'}, \mathbf{j'}, \mathbf{k'}, \mathbf{k}) }  \sum_{\mathbf{x} \in \mathbb{N}^n} \varpi(\mathbf{x})
\prod_{w \in \{ i'_1, \ldots, i'_{s'} \}  } \chi^0(x_w) \Lambda^*(x_w)
\cdot
\\
&&
\notag
\prod_{v \in \{ j'_1, \ldots, j'_{m'} \}  }  \chi_v(x_v) \Lambda^*(x_v) \cdot
\prod_{u \in \{ k'_1, \ldots, k'_{\ell'} \}   } \widetilde{\chi} \chi^0(x_u) \Lambda^*(x_u) \cdot
\\
\notag
&&
(-1)^{\ell} x_{k_1}^{\widetilde{\beta} - 1} \cdots x_{k_{\ell}}^{\widetilde{\beta} - 1}
\tsp   e ( \tau F(\mathbf{x}) ),
\notag
\end{eqnarray}
where the sum $\sum_{ (\mathbf{i}, \mathbf{i'}, \mathbf{j'}, \mathbf{k'}, \mathbf{k}) }$ is over all $(\mathbf{i}, \mathbf{i'}, \mathbf{j'}, \mathbf{k'}, \mathbf{k})$
satisfying
$$
\{ w:  \chi_w = \chi^0  \} =  \{ i_1, \ldots, i_{n - s' - m' - \ell' - \ell} \} \cup   \{ i'_1, \ldots, i'_{s'} \},
$$
$$
\{ v:  \chi_v \neq \chi^0, \widetilde{\chi} \chi^0   \} =   \{ j'_1, \ldots, j'_{m'} \}
$$
and
$$
\{ u:  \chi_u =  \widetilde{\chi} \chi^0  \} =  \{ k'_1, \ldots, k'_{\ell'} \} \cup  \{ k_1, \ldots, k_{\ell} \},
$$
and the convention regarding $\chi^0(x_w) \Lambda^*(x_w)$ and  $\widetilde{\chi} \chi^0(x_u) \Lambda^*(x_u)$
described in the sentence following (\ref{DefnW}) is being used. By substituting (\ref{S1 small}) into (\ref{S1}) and
changing the order of summation, we obtain
\begin{eqnarray}
\notag
S^*(\alpha) &=& \sum_{ (\mathbf{i}, \mathbf{i'}, \mathbf{j'}, \mathbf{k'}, \mathbf{k}) }
\frac{1}{\phi(q)^n} \sum_{\chi_{j'_1}, \ldots, \chi_{j'_{m'}  }  \neq \chi^0, \widetilde{\chi} \chi^0 } \
\sum_{\mathbf{h} \in \mathbb{U}_q^n} \overline{\chi_{j'_1} }(h_{j'_1}) \cdots \overline{\chi_{j'_{m'}} }(h_{j'_{m'}}) \cdot
\\
\notag
&& \prod_{u \in \{ k'_1, \ldots, k'_{\ell'} \} \cup  \{ k_1, \ldots, k_{\ell} \}}  \overline{  \widetilde{\chi} \chi^0  }(h_{u})
\cdot  e \left( \frac{a}{q} F(\mathbf{h}) \right) \cdot \sum_{\mathbf{x} \in \mathbb{N}^n} \varpi(\mathbf{x})
\prod_{w \in \{ i'_1, \ldots, i'_{s'} \}  } \chi^0(x_w) \Lambda^*(x_w)
\cdot
\\
&&
\notag
\prod_{v \in \{ j'_1, \ldots, j'_{m'} \}  }  \chi_v(x_v) \Lambda^*(x_v) \cdot
\prod_{u \in \{ k'_1, \ldots, k'_{\ell'} \}   } \widetilde{\chi} \chi^0(x_u) \Lambda^*(x_u) \cdot
\\
\notag
&&
(-1)^{\ell} x_{k_1}^{\widetilde{\beta} - 1} \cdots x_{k_{\ell}}^{\widetilde{\beta} - 1}
\tsp   e ( \tau F(\mathbf{x}) ),
\notag
\end{eqnarray}
where the sum $\sum_{ (\mathbf{i}, \mathbf{i'}, \mathbf{j'}, \mathbf{k'}, \mathbf{k}) }$ is over all $(\mathbf{i}, \mathbf{i'}, \mathbf{j'}, \mathbf{k'}, \mathbf{k})$
satisfying (\ref{paritionnn}). Let us set $\mathbf{j} = (\mathbf{i}', \mathbf{j}', \mathbf{k}')$.
Then for fixed $\mathbf{h} \in \mathbb{U}_q^n$, $\x \in \NN^n$ and $(\mathbf{i}, \mathbf{k})$, it follows that
\begin{eqnarray}
&&\sum_{ (\mathbf{i'}, \mathbf{j'}, \mathbf{k'}) } \tsp
\sum_{\chi_{j'_1}, \ldots, \chi_{j'_{m'}  }  \neq \chi^0, \widetilde{\chi} \chi^0 } \
\overline{\chi_{j'_1} }(h_{j'_1}) \cdots \overline{\chi_{j'_{m'}} }(h_{j'_{m'}}) \prod_{u \in \{ k'_1, \ldots, k'_{\ell'} \} }  \overline{  \widetilde{\chi} \chi^0  }(h_{u}) \cdot
\\
\notag
&&
\prod_{w \in \{ i'_1, \ldots, i'_{s'} \}  } \chi^0(x_w) \Lambda^*(x_w)
\cdot
\notag
\prod_{v \in \{ j'_1, \ldots, j'_{m'} \}  }  \chi_v(x_v) \Lambda^*(x_v) \cdot
\prod_{u \in \{ k'_1, \ldots, k'_{\ell'} \}   } \widetilde{\chi} \chi^0(x_u) \Lambda^*(x_u)
\notag
\\
\notag
&=&\sum_{ (\mathbf{i'}, \mathbf{j'}, \mathbf{k'}) } \tsp
\prod_{w \in \{ i'_1, \ldots, i'_{s'} \}  }
\overline{  \chi^0  }(h_{w})
\chi^0(x_w) \Lambda^*(x_w) \cdot
\\
\notag
&& \sum_{\chi_{j'_1}, \ldots, \chi_{j'_{m'}  }  \neq \chi^0, \widetilde{\chi} \chi^0 } \
\prod_{v \in \{ j'_1, \ldots, j'_{m'} \}  } \overline{\chi_{v}} (h_{v}) \chi_v(x_v) \Lambda^*(x_v)  \cdot
\prod_{u \in \{ k'_1, \ldots, k'_{\ell'} \} }  \overline{  \widetilde{\chi} \chi^0  }(h_{u}) \widetilde{\chi} \chi^0(x_u) \Lambda^*(x_u)
\\
&=&
\notag
\sum_{\mathbf{j}}
\sum_{\chi_{j_1}, \ldots, \chi_{j_{m}}  (\textnormal{mod} \tsp q)   }
\prod_{v \in \{ j_1, \ldots, j_{m} \}  }   \overline{\chi_{v} }(h_{v})  \chi_v(x_v) \Lambda^*(x_v),
\end{eqnarray}
where the sum $\sum_{ \mathbf{j} }$ is over all $\mathbf{j}$ satisfying (\ref{cond on j and k}).
Therefore, we obtain
\begin{eqnarray}
\label{sum'''}
&&S^*(\alpha)
\\
\notag
&=&
\sum_{ (\mathbf{i}, \mathbf{j}, \mathbf{k}) }
\frac{1}{\phi(q)^n} \sum_{\chi_{j_1}, \ldots, \chi_{j_{m}}  (\textnormal{mod} \tsp q)   } \
\sum_{\mathbf{h} \in \mathbb{U}_q^n} \overline{\chi_{j_1} }(h_{j_1}) \cdots \overline{\chi_{j_{m}} }(h_{j_{m}})
\prod_{u \in \{ k_1, \ldots, k_{\ell} \}}  \overline{  \widetilde{\chi} \chi^0  }(h_{u})
\cdot  e \left( \frac{a}{q} F(\mathbf{h}) \right) \cdot
\\
\notag
&&
\sum_{\mathbf{x} \in \mathbb{N}^n} \varpi(\mathbf{x})
\prod_{v \in \{ j_1, \ldots, j_{m} \}  }  \chi_v(x_v) \Lambda^*(x_v)  \cdot
(-1)^{\ell} x_{k_1}^{\widetilde{\beta} - 1} \cdots x_{k_{\ell}}^{\widetilde{\beta} - 1}
\tsp   e ( \tau F(\mathbf{x}) )
\\
\notag
&=& \frac{1}{\phi(q)^n} \tsp  \mathcal{A} (q, a; (1, \ldots, n) ; \emptyset ;  \emptyset )
\sum_{\mathbf{x} \in \mathbb{N}^n} \varpi(\mathbf{x})   e ( \tau F(\mathbf{x})  )
\\
\notag
&+& \sum_{(\mathbf{j}, \mathbf{k}) } \frac{(-1)^{\ell}}{\phi(q)^n}
\sum_{\chi_1, \ldots, \chi_m (\textnormal{mod} \tsp q)}
\mathcal{A}(q, a; \mathbf{i} ;  (j_1, \chi_1), \ldots,  (j_m, \chi_m); \mathbf{k} ) \cdot
\\
&& \sum_{\mathbf{x} \in \mathbb{N}^n} \varpi(\mathbf{x})
\prod_{1 \leq v \leq m} \chi_v(x_{j_v}) \Lambda^*(x_{j_v})
\cdot   x_{k_1}^{\widetilde{\beta} - 1} \cdots x_{k_{\ell}}^{\widetilde{\beta} - 1}
\tsp e ( \tau F(\mathbf{x})  ),
\notag
\end{eqnarray}
where the sum $\sum_{(\mathbf{j}, \mathbf{k}) }$ is over all $(\mathbf{j}, \mathbf{k})$ satisfying $(m, \ell) \not = (0,0)$ and (\ref{cond on j and k}).
We also switched the notation from $\chi_{j_m}$ to $\chi_m$ to get the second equality.
Each summand of the sum $\sum_{(\mathbf{j}, \mathbf{k}) }$ in (\ref{sum'''}) can also be expressed as
\begin{eqnarray}
\notag
&& \frac{ (-1)^{\ell} }{\phi(q)^n}
\sum_{\mathbf{h} \in \mathbb{U}_q^n} \overline{\widetilde{\chi} \chi^0 }(h_{k_1}) \cdots
\overline{\widetilde{\chi} \chi^0}(h_{k_{\ell}}) \
e \left( \frac{a}{q} F(\mathbf{h}) \right) \cdot
\\
&&
\notag
\sum_{ \substack{ x_{j_1}, \ldots, x_{j_m} \in \mathbb{N} } } \
\prod_{  1 \leq v \leq m } \Lambda^*(x_{j_v}) \sum_{\chi_v (\textnormal{mod} \tsp q)}   \overline{\chi_v}(h_{j_v})  \chi_v(x_{j_v})
\cdot
\sum_{ \substack{ x_{u} \in \mathbb{N} \\ (u \not \in\{j_1, \ldots, j_m\}) } }
\varpi(\mathbf{x}) \tsp x_{k_1}^{\widetilde{\beta} - 1} \cdots x_{k_{\ell}}^{\widetilde{\beta} - 1} e ( \tau F(\mathbf{x}) ).
\\
\notag
&=& \frac{ (-1)^{\ell} }{\phi(q)^n}
\sum_{\mathbf{h} \in \mathbb{U}_q^n} \overline{\widetilde{\chi} \chi^0 }(h_{k_1}) \cdots
\overline{\widetilde{\chi} \chi^0}(h_{k_{\ell}}) \
e \left( \frac{a}{q} F(\mathbf{h}) \right) \cdot
\\
&&
\notag
\phi(q)^m \sum_{ \substack{ x_{j_v} \in \mathbb{N}  \\  x_{j_v} \equiv h_{j_v}  (\textnormal{mod} \tsp q) \\  (1 \leq v \leq m)  } } \
\prod_{  1 \leq v \leq m } \Lambda^*(x_{j_v})
\cdot
\sum_{ \substack{ x_{u} \in \mathbb{N} \\ (u \not \in\{j_1, \ldots, j_m\}) } }
\varpi(\mathbf{x}) \tsp x_{k_1}^{\widetilde{\beta} - 1} \cdots x_{k_{\ell}}^{\widetilde{\beta} - 1} e ( \tau F(\mathbf{x}) ),
\end{eqnarray}
where we used the orthogonality relation of the Dirichlet characters to obtain the latter expression.
Next we apply the following estimate which can be deduced from the mean value theorem along with (\ref{excepZZ}).
\begin{lem}
\label{Lemma 2.22}
For each $x_{j_1}, \ldots, x_{j_m} \in \mathbb{N}$,  we have
\begin{eqnarray}
\notag
&&  \sum_{ \substack{ x_{u} \in \mathbb{N} \\ (u \not \in\{j_1, \ldots, j_m\}) } }
\varpi(\mathbf{x}) \tsp x_{k_1}^{\widetilde{\beta} - 1} \cdots x_{k_{\ell}}^{\widetilde{\beta} - 1} e ( \tau F(\mathbf{x})  )
\\
&=&
\notag
\int_0^{\infty} \cdots \int_0^{\infty}
\varpi(\mathbf{x}) \tsp x_{k_1}^{\widetilde{\beta} - 1} \cdots x_{k_{\ell}}^{\widetilde{\beta} - 1}
 e ( \tau F(\mathbf{x})  ) \tsp  d x_{i_1} \cdots d x_{i_{n - m - \ell}} d x_{k_1} \cdots d x_{k_{\ell}}
+ O(X^{n - m + \vartheta_0 + \lambda - 1}),
\end{eqnarray}
where the implicit constant is independent of $x_{j_1}, \ldots, x_{j_m}$.
\end{lem}
\begin{proof}
We deal with one variable at a time. By the mean value theorem, it follows that
\begin{eqnarray}
\notag
&& \sum_{  x_{k_1} \in \mathbb{N}   }
\varpi(\mathbf{x}) \tsp x_{k_1}^{\widetilde{\beta} - 1} e ( \tau F(\mathbf{x})  )
\\
&=&
\notag
\sum_{ t \in \mathbb{N}  }
\int_{ t }^{ t + 1}
\varpi(\mathbf{x}) \tsp x_{k_1}^{\widetilde{\beta} - 1}  e ( \tau F(\mathbf{x})  ) \tsp d x_{k_1}
+ O \left(  \sup_{ x_{k_1} \in [t, t+1]  }  \left|  \frac{\partial  }{  \partial x_{k_1} } \left(  \varpi(\mathbf{x}) \tsp x_{k_1}^{\widetilde{\beta} - 1} e ( \tau F(\mathbf{x})  ) \right)
\right|  \right)
\\
&=&
\notag
\int_{ 0 }^{ \infty }
\varpi(\mathbf{x}) \tsp x_{k_1}^{\widetilde{\beta} - 1}  e ( \tau F(\mathbf{x})  ) \tsp d x_{k_1}
+ O \left( X  \sup_{ x_{k_1} \in [1, X]  }  \left|  \frac{\partial  }{  \partial x_{k_1} }
\left( \varpi(\mathbf{x}) \tsp x_{k_1}^{\widetilde{\beta} - 1} e ( \tau F(\mathbf{x})  ) \right)
\right|  \right).
\end{eqnarray}
By (\ref{excepZZ}) and the assumption $|\tau| < X^{\vartheta_0 + \lambda - d}$, we have
\begin{eqnarray}
\notag
\sup_{x_{k_1} \in [1, X]} \left|  \frac{\partial  }{  \partial x_{k_1} }  \left(  \varpi(\mathbf{x}) \tsp x_{k_1}^{\widetilde{\beta} - 1} e ( \tau F(\mathbf{x})  )
\right)
\right|
\ll
|\tau X^{d-1}|
<
|X^{\vartheta_0 + \lambda - 1}|.
\end{eqnarray}
Therefore, we obtain
\begin{eqnarray}
\notag
&&  \sum_{ \substack{ x_{u} \in \mathbb{N} \\ (u \not \in\{j_1, \ldots, j_m\}) } }
\varpi(\mathbf{x}) \tsp x_{k_1}^{\widetilde{\beta} - 1} \cdots x_{k_{\ell}}^{\widetilde{\beta} - 1} e ( \tau F(\mathbf{x})  )
\\
&=&
\notag
\int_0^{\infty}  \sum_{ \substack{ x_{u} \in \mathbb{N} \\ (u \not \in\{j_1, \ldots, j_m, k_1\}) } }
\varpi(\mathbf{x}) \tsp x_{k_1}^{\widetilde{\beta} - 1} \cdots x_{k_{\ell}}^{\widetilde{\beta} - 1}
 e ( \tau F(\mathbf{x})  ) \tsp  d x_{k_1}
+ O(X^{n - m + \vartheta_0 + \lambda - 1}).
\end{eqnarray}
The result follows by repeating this procedure for each of the remaining variables.
\end{proof}

We now apply Lemma \ref{Lemma 2.22} to both terms on the right hand side of (\ref{sum'''}); the first term becomes
\begin{eqnarray}
\notag
&& \frac{1}{\phi(q)^n} \tsp  \mathcal{A} (q, a; (1, \ldots, n) ; \emptyset ;  \emptyset )
\sum_{\mathbf{x} \in \mathbb{N}^n} \varpi(\mathbf{x})  e ( \tau F(\mathbf{x})  )
\\
\notag
&=&  \frac{1}{\phi(q)^n} \tsp  \mathcal{A} (q, a; (1, \ldots, n) ; \emptyset ;  \emptyset )
\tsp  \mathcal{W}(\tau; (1, \ldots, n) ; \emptyset ; \emptyset )
+ O(X^{n + \vartheta_0 + \lambda - 1}),
\end{eqnarray}
and each summand of the sum $\sum_{(\mathbf{j}, \mathbf{k}) }$ (see the expression above Lemma \ref{Lemma 2.22}) becomes
\begin{eqnarray}
\notag
&& \frac{(-1)^{\ell}}{\phi(q)^n}
\sum_{\chi_1, \ldots, \chi_m (\textnormal{mod} \tsp q)}
\mathcal{A}(q, a; \mathbf{i} ;  (j_1, \chi_1), \ldots,  (j_m, \chi_m); \mathbf{k} ) \cdot
\\
&& \sum_{\mathbf{x} \in \mathbb{N}^n} \varpi(\mathbf{x}) \tsp  \chi_1(x_{j_1}) \Lambda^*(x_{j_1}) \cdots \chi_m(x_{j_m}) \Lambda^*(x_{j_m})
\tsp  x_{k_1}^{\widetilde{\beta} - 1} \cdots x_{k_{\ell}}^{\widetilde{\beta} - 1}
\tsp e ( \tau F(\mathbf{x})  )
\notag
\\
&=&
\frac{(-1)^{\ell}}{\phi(q)^n}
\sum_{\chi_1, \ldots, \chi_m (\textnormal{mod} \tsp q)}
\mathcal{A}(q, a; \mathbf{i} ;  (j_1, \chi_1), \ldots,  (j_m, \chi_m); \mathbf{k} )
\tsp \mathcal{W}(\tau; \mathbf{i};  (j_1, \chi_1), \ldots,  (j_m, \chi_m); \mathbf{k} )
\notag
\\
&+&
\notag
O \left(
\frac{  X^{n - m + \vartheta_0 + \lambda - 1} }{\phi(q)^{n-m}}
\sum_{\mathbf{h} \in \mathbb{U}_q^n}
\prod_{  1 \leq v \leq m }
\sum_{ \substack{ 1 \leq x_{j_v} \leq X  \\  x_{j_v} \equiv h_{j_v}  (\textnormal{mod} \tsp q) } }
\Lambda^*(x_{j_v})
\right).
\end{eqnarray}
Therefore, we see that we have obtained the result apart form the error term.
For each $(\mathbf{j}, \mathbf{k})$, we have by the prime number theorem that
\begin{eqnarray}
\notag
&& \frac{  X^{n - m + \vartheta_0 + \lambda - 1} }{\phi(q)^{n-m}}
\sum_{\mathbf{h} \in \mathbb{U}_q^n}
\prod_{  1 \leq v \leq m }
\sum_{ \substack{ 1 \leq x_{j_v} \leq X  \\  x_{j_v} \equiv h_{j_v}  (\textnormal{mod} \tsp q) } }
\Lambda^*(x_{j_v})
\\
&=&
\notag
X^{n - m + \vartheta_0 + \lambda - 1}
\prod_{1 \leq v \leq m} \,
\sum_{h_{j_v} \in \mathbb{U}_q} \, \sum_{ \substack{ 1 \leq x_{j_v} \leq X \\  x_{j_v} \equiv h_{j_v} (\textnormal{mod} \tsp q) } }  \Lambda^*(x_{j_v})
\\
&\leq&
\notag
X^{n - m + \vartheta_0 + \lambda - 1}
\left( \sum_{ 1 \leq x \leq X }  \Lambda(x) \right)^m
\\
&\ll&
\notag
X^{n + \vartheta_0 + \lambda  -  1}.
\end{eqnarray}
From this bound it follows that the error term is as in the statement of the lemma.
\end{proof}
It follows from Lemma \ref{first sum lemma} that
\begin{eqnarray}
\label{MAJOR1}
&&\int_{\mathfrak{M}^+(\vartheta_0)} S^*(\alpha) \tsp d \alpha
\\
\notag
&=&
\sum_{1 \leq q \leq X^{\vartheta_0}} \sum_{\substack{ 1 \leq a \leq q \\  \gcd (a,q) = 1 }} \frac{1}{\phi(q)^n} \tsp \mathcal{A} (q, a; (1, \ldots, n) ; \emptyset ;  \emptyset )
\cdot
\int_{|\tau| < X^{\vartheta_0 + \lambda - d} }
\mathcal{W} (\tau;(1, \ldots, n) ;  \emptyset ; \emptyset ) \tsp d \tau
\\
\notag
&+&
\sum_{(\mathbf{j}, \emptyset) }
\sum_{1 \leq q \leq X^{\vartheta_0}} \sum_{\substack{ 1 \leq a \leq q \\  \gcd (a,q) = 1 }}  \frac{ 1 }{\phi(q)^n} \tsp \int_{|\tau| <
X^{\vartheta_0 + \lambda - d}  } S_{\mathbf{j}, \emptyset} \left( \frac{a}{q} + \tau \right)  d \tau
\\
\notag
&+&
\sum_{ \substack{ (\mathbf{j}, \mathbf{k})  \\   \mathbf{k} \neq \emptyset  } }
\sum_{  \substack{ 1 \leq q \leq X^{\vartheta_0}  \\  \widetilde{r} | q  } } \sum_{\substack{ 1 \leq a \leq q \\  \gcd (a,q) = 1 }}  \frac{(-1)^{\ell}}{\phi(q)^n} \tsp \int_{|\tau| <
X^{\vartheta_0 + \lambda - d}  } S_{\mathbf{j}, \mathbf{k}} \left( \frac{a}{q} + \tau \right) d \tau
\\
\notag
&+& O(X^{n - d + 4 \vartheta_0 + 2 \lambda - 1}),
\end{eqnarray}
where the sum $\sum_{(\mathbf{j}, \mathbf{k}) }$ is as in the statement of Lemma \ref{first sum lemma} (If the exceptional zero does not exist, then the sum
$\sum_{ \substack{ (\mathbf{j}, \mathbf{k})  \\   \mathbf{k} \neq \emptyset  } }$ is an empty sum.). We prove that the first term on the right hand side of (\ref{MAJOR1}) contributes the main term, while the remaining terms are error terms. We consider the contribution from each $( \mathbf{j}, \mathbf{k})$ separately, where we deal with the case $\mathbf{k} = \emptyset$ in Section \ref{sec m>0}, and the case $\mathbf{k} \neq \emptyset$ in Section \ref{secm=0}. Finally, Proposition \ref{prop major} is established in Section \ref{secmajormainterm}.

\subsection{Case $\mathbf{k} = \emptyset$}
\label{sec m>0}
Since $(m, \ell) \not = (0,0)$, we necessarily have $m > 0$. Without loss of generality let $\mathbf{j} = (1, \ldots, m)$.
In this case, we have
\begin{eqnarray}
\label{eqn 1 m>0}
&& \Big{|} \sum_{1 \leq q \leq X^{\vartheta_0}} \sum_{ \substack{ 1 \leq a \leq q \\  \gcd(a,q) = 1 } } \frac{1}{\phi(q)^n}  \int_{|\tau| < X^{\vartheta_0 + \lambda- d} }
S_{\mathbf{j}, \emptyset } \left( \frac{a}{q} + \tau \right)  d \tau \Big{|}
\\
\notag
&=& \Big{|} \sum_{1 \leq q \leq X^{\vartheta_0}}  \sum_{ \substack{ \chi'_1, \ldots, \chi'_m \\ (\textnormal{mod} \tsp q) } }  \sum_{\substack{ 1 \leq a \leq q \\  \gcd(a,q)
= 1 }} \frac{1}{\phi(q)^n} \tsp \mathcal{A}(q, a; \mathbf{i};  (1, \chi'_1), \ldots, (m, \chi'_m) ; \emptyset  ) \cdot  \\
&& \int_{|\tau| < X^{\vartheta_0 + \lambda - d} }  \mathcal{W}(\tau; \mathbf{i}; (1, \chi'_1), \ldots, (m, \chi'_m) ; \emptyset  )  \tsp d \tau  \Big{|}.
\notag
\end{eqnarray}
Let us denote $\chi'_v = \chi_v \chi^0$, where $\chi_v$ is the primitive character modulo $r_v$ which induces $\chi'_v$,
and $\chi^0$ is the principal character modulo $q$. We also denote
\begin{eqnarray}
\label{defnR}
R= \textnormal{lcm}(r_1, \ldots, r_m).
\end{eqnarray}
We let 
$$
\delta_{\min} =  \min_{1 \leq u \leq n} (x_{0,u} - \delta).
$$
Let $p \in [(x_{0,u} - \delta) X, (x_{0,u} + \delta) X]$ be a prime.
Since $q \leq X^{\vartheta_0} < \delta_{\min} X$, we have $\gcd(p,q) = 1$ and it follows that
$\chi_v(p) \chi^{0}(p) = \chi_v(p)$.
Consequently, we obtain
\begin{eqnarray}
\notag
\mathcal{W}(\tau; \mathbf{i}; (1, \chi'_1), \ldots, (m, \chi'_m) ; \emptyset  )
&=& \mathcal{W}(\tau; \mathbf{i}; (1, \chi_1 \chi^0), \ldots, (m, \chi_m \chi^0) ; \emptyset )
\\
\notag
&=& \mathcal{W}(\tau; \mathbf{i}; (1, \chi_1), \ldots, (m, \chi_m) ; \emptyset  ).
\end{eqnarray}
Recall the definition of $\kappa$ given in (\ref{defn kap}).
Since $q / \phi(q) \ll_{\varepsilon} q^{\varepsilon}$ for any $\varepsilon > 0$, it follows from Lemma \ref{exp sum modp} that
\begin{eqnarray}
\label{usinglem7.4}
&&\sum_{\substack{ 1 \leq q \leq X^{\vartheta_0} \\  R |q   }}  \sum_{\substack{ 1 \leq a \leq q \\  \gcd(a,q) = 1 }} \frac{1}{\phi(q)^n} | \mathcal{A}(q, a; \mathbf{i}; (1,
\chi_1 \chi^0), \ldots, (m, \chi_m \chi^0) ; \emptyset ) |
\\
\notag
&\ll&
\sum_{\substack{ 1 \leq q \leq X^{\vartheta_0} \\  R |q   }}  \sum_{\substack{ 1 \leq a \leq q \\  \gcd(a,q) = 1 }}
 \frac{ q^{n -  \frac{\textnormal{codim} \tsp V_F^*}{2(2d-1)4^d}  + \varepsilon} }{\phi(q)^n}
\\
\notag
&\ll&
\sum_{\substack{ 1 \leq q \leq X^{\vartheta_0} \\  R |q   }} q^{-  \frac{\textnormal{codim} \tsp V_F^*}{2(2d-1)4^d} + 1 + \kappa_0 }
\\
\notag
&\ll& R^{- \kappa}.
\end{eqnarray}
Therefore, the term on the right hand side of (\ref{eqn 1 m>0}) can be rewritten as
\begin{eqnarray}
\label{eqn 2 m>0}
&& \Big{|} \sum_{   1 \leq r_1, \ldots, r_m \leq X^{\vartheta_0} } \  \sideset{}{^*}\sum_{\substack{\chi_v (\textnormal{mod} \tsp r_v)  \\  (1 \leq v \leq m) } }
\sum_{\substack{ 1 \leq q \leq X^{\vartheta_0} \\  R |q   }}   \sum_{\substack{ 1 \leq a \leq q \\  \gcd(a,q) = 1 }} \frac{1}{\phi(q)^n} \tsp
\mathcal{A}(q, a; \mathbf{i}; (1, \chi_1 \chi^0), \ldots, (m, \chi_m \chi^0) ;\emptyset ) \cdot
\\
\notag
&& \int_{|\tau| < X^{\vartheta_0 + \lambda - d} } \mathcal{W}(\tau; \mathbf{i}; (1, \chi_1), \ldots, (m, \chi_m) ; \emptyset ) \tsp d \tau
\Big{|}
\\
\notag
&\ll& \sum_{   1 \leq r_1, \ldots, r_m \leq X^{\vartheta_0} } R^{-\kappa}  \sideset{}{^*}\sum_{\substack{\chi_v (\textnormal{mod} \tsp r_v) \\ (1 \leq v \leq m) } }
\int_{|\tau| < X^{\vartheta_0 + \lambda- d} }
| \mathcal{W}(\tau; \mathbf{i}; (1, \chi_1), \ldots, (m, \chi_m) ; \emptyset  )  |  \tsp d \tau.
\end{eqnarray}

We will make use of the following explicit formula (for example, it can be deduced from \cite[\S 17 and \S 19]{D}): Let $1 \leq q \leq Y$ and $2 \leq T' \leq Y^{1/2}$. For
any primitive character $\chi$ modulo $q$, we have
\begin{eqnarray}
\label{explicit formula}
\sum_{y \leq Y} \Lambda^*(y) \chi(y)  = \delta_{\chi = \chi^0} Y - \sum_{ \substack{ \rho \in B_{T'}  \\  L(\rho, \chi) = 0  }} \frac{Y^{\rho}}{\rho} + \mathfrak{R}(Y),
\end{eqnarray}
where $\delta_{\chi = \chi^0} = 1$ if $\chi = \chi^0$ and $0$ otherwise, and
\begin{eqnarray}
\label{explicit error}
| \mathfrak{R}(Y) |  \ll E(Y) = \frac{Y (\log Y)^2}{T'}.
\end{eqnarray}
Here the sum $\sum_{ \substack{ \rho \in B_{T'}  \\  L(\rho, \chi) = 0  }}$ is over all the zeros, with multiplicity, of $L(s, \chi)$ in $B_{T'}$.
We will be using (\ref{explicit formula}) with $T' = T$ and $X \ll Y \ll X$.

With these notation (also recall the notation introduced in the paragraph preceding Lemma \ref{Gbdd}) we obtain the following lemma, which is essentially \cite[Lemma 8.2]{Y2}; the result is obtained by replacing $\vartheta_0$ in the proof of \cite[Lemma 8.2]{Y2} with $\vartheta_0 + \lambda$.  Note we do not assume that $\mathbf{k} = \emptyset$ for this lemma, and
recall the notation introduced in the paragraph following (\ref{excepZZ}).
\begin{lem}
\label{bound on W}
Without loss of generality let $\mathbf{j} = (1, \ldots, m)$ and $\mathbf{k} = (m+1, \ldots, m + \ell)$,
where $m > 0$ and $\ell \geq 0$. Suppose $\chi_v$ is a primitive character modulo $r_v$ $(1 \leq v \leq m)$.
Let $|\tau| < X^{\vartheta_0 + \lambda - d}$.
Then we have
\begin{eqnarray}
\label{eqn lem bdd W}
&&\Big{|}  \mathcal{W}(\tau; \mathbf{i};  (1, \chi_1), \ldots,  (m, \chi_m); \mathbf{k} ) \Big{|}
\\
\notag
&\ll& \Big{|} \int_{0}^{\infty} \cdots \int_0^{\infty}
\prod_{1 \leq  v \leq m }  \sideset{}{'}\sum_{\rho_v} x_v^{\rho_v - 1}
\cdot
x_{m+1}^{\widetilde{\beta} - 1} \cdots x_{m + \ell}^{\widetilde{\beta} -1}
\tsp \varpi(\mathbf{x}) \tsp  e (\tau F(\mathbf{x}) ) \tsp d \mathbf{x} \Big{|}
+ \widetilde{E},
\end{eqnarray}
where
\begin{eqnarray}
\notag
\widetilde{E} =  \sum_{ \boldsymbol{\epsilon} \in \{ 0, 1 \}^{m} \backslash \{ \mathbf{0} \} } X^{n} X^{(\vartheta_0 + \lambda - 1) (\epsilon_1 + \cdots + \epsilon_m)}
E(X)^{\epsilon_1 + \cdots + \epsilon_m} \prod_{ \epsilon_j = 0  }  \sideset{}{'}\sum_{\rho_j} (\delta_{\min} X)^{\beta_j - 1}
\end{eqnarray}
and $\textnormal{Re }\rho_j = \beta_j$.
\end{lem}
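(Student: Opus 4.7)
The plan is to apply partial summation combined with the explicit formula (\ref{explicit formula}) iteratively to each of the $m$ sums over $x_{j_1},\ldots,x_{j_m}$ in the definition (\ref{DefnW2}) of $\mathcal{W}$. As a preliminary step, I would verify that the redefinitions appearing in the paragraph after (\ref{DefnW2}) — replacing $\chi_v(x_{j_v})\Lambda^*(x_{j_v})$ by $\Lambda^*(x_{j_v})-1$ when $r_v=1$, and by $\widetilde{\chi}(x_{j_v})\Lambda^*(x_{j_v})+x_{j_v}^{\widetilde{\beta}-1}$ when $\chi_v=\widetilde{\chi}$ — combine with (\ref{explicit formula}) to give the uniform estimate
$$\sum_{y\leq Y}[\mathrm{mod}]_v(y) \;=\; -\sideset{}{'}\sum_{\rho_v}\frac{Y^{\rho_v}}{\rho_v}\;+\;O(E(Y)+1),$$
valid in all three cases. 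The subtraction of $1$ cancels the main term $Y$ in the trivial-character case, and the addition of $x_{j_v}^{\widetilde{\beta}-1}$ cancels the exceptional-zero contribution, since $\sum_{y\leq Y}y^{\widetilde{\beta}-1}=Y^{\widetilde{\beta}}/\widetilde{\beta}+O(1)$ by (\ref{excepZZ}) and Abel summation.

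Next, I would fix all variables except $x_{j_v}$ and, writing $G_v$ for the integrand viewed as a function of $x_{j_v}$, apply partial summation followed by integration by parts in the leading term to get
\begin{equation*}
\sum_{x_{j_v}\in\NN}G_v(x_{j_v})[\mathrm{mod}]_v(x_{j_v})\;=\;-\sideset{}{'}\sum_{\rho_v}\int_0^\infty G_v(x)\,x^{\rho_v-1}\,dx\;+\;O\!\left(\int_0^\infty|G_v'(x)|\bigl(E(x)+1\bigr)\,dx\right),
\end{equation*}
where the boundary terms vanish by the compact support of $\varpi$ away from $0$. Performing this iteratively for $v=1,\ldots,m$ and expanding the resulting $m$-fold product decomposes $\mathcal{W}$ as a sum over $\boldsymbol{\epsilon}\in\{0,1\}^m$, where $\epsilon_v=0$ retains the integral against $\sideset{}{'}\sum_{\rho_v}x_{j_v}^{\rho_v-1}$ and $\epsilon_v=1$ retains the error contribution.

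The term $\boldsymbol{\epsilon}=\mathbf{0}$ reproduces, up to the sign $(-1)^m$, precisely the first expression on the right-hand side of (\ref{eqn lem bdd W}). By the triangle inequality it then suffices to bound the contribution of each $\boldsymbol{\epsilon}\neq\mathbf{0}$ by the corresponding summand of $\widetilde{E}$. Using the chain rule, the dominant part of $|\partial_{x_{j_v}}G_v|$ comes from differentiating $e(\tau F(\mathbf{x}))$ and is $\ll|\tau|X^{d-1}\ll X^{\vartheta_0+\lambda-1}$ under the hypothesis $|\tau|<X^{\vartheta_0+\lambda-d}$. Hence each error slot $\epsilon_v=1$ contributes $\ll X\cdot X^{\vartheta_0+\lambda-1}E(X)$ after integration over $\mathrm{supp}\,\psi_{j_v}$, while each zero slot $\epsilon_v=0$ contributes $\ll X\cdot\sideset{}{'}\sum_{\rho_v}(\delta_{\min}X)^{\beta_v-1}$ (using that $x_{j_v}\asymp X$ on the support and $|x_{j_v}^{\rho_v-1}|=x_{j_v}^{\beta_v-1}$). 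Multiplying with the trivial bound $\ll X$ for each of the remaining $n-m$ integrations over the $\mathbf{i}$ and $\mathbf{k}$ variables and collecting the resulting factor of $X^m$ into $X^n$ yields exactly the displayed form of $\widetilde{E}$.

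The main delicate point is achieving uniformity across the three possible shapes of $\chi_v$ (trivial, exceptional, generic non-exceptional); the redefinitions in (\ref{DefnW2}) are engineered precisely to make all three cases collapse into one statement. A secondary point is that the replacement of $\vartheta_0$ by $\vartheta_0+\lambda$ relative to \cite[Lemma 8.2]{Y2} propagates only through the derivative bound $|\tau|X^{d-1}\ll X^{\vartheta_0+\lambda-1}$, so no further structural modifications to the earlier argument are required.
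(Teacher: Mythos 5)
Your proposal is correct and follows essentially the same route as the paper's proof, which (as the paper notes) is the proof of Lemma~8.2 of \cite{Y2} with $\vartheta_0$ replaced by $\vartheta_0+\lambda$: iterated Abel summation combined with the explicit formula (\ref{explicit formula}) and integration by parts, applied to each of the $m$ sum variables, producing a decomposition of $\mathcal{W}$ indexed by $\boldsymbol{\epsilon}\in\{0,1\}^m$ in which $\boldsymbol{\epsilon}=\mathbf{0}$ recovers the main term and the remaining $2^m-1$ pieces are bounded by the respective summands of $\widetilde{E}$ using $|\partial^{\boldsymbol{\epsilon}}\varsigma|\ll X^{(\vartheta_0+\lambda-1)(\epsilon_1+\cdots+\epsilon_m)}$ and the uniform $O(E(Y)+1)$ control of the error in the modified explicit formula in all three character cases.
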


Recall the definition of $\varpi$ given in (\ref{defn varp}).
We substitute (\ref{eqn lem bdd W}) into (\ref{eqn 2 m>0}), and
apply Proposition \ref{prop osc int} to the first of the resulting terms in the following manner
\begin{eqnarray}
\label{eqn 3 m>0}
&&
\int_{|\tau| < X^{\vartheta_0 + \lambda - d} } \Big{|}  \int_{0}^{\infty} \cdots \int_0^{\infty}
\prod_{1 \leq  v \leq m}  \sideset{}{'}\sum_{\rho_v} x_v^{\rho_v - 1}
\cdot \varpi(\mathbf{x}) \tsp  e (\tau F(\mathbf{x}) )   \tsp d \mathbf{x}  \Big{|}  \tsp  d \tau
\\
&=&
X^{n-d} \int_{|\tau| < X^{\vartheta_0 + \lambda} } \Big{|}  \int_{0}^{\infty} \cdots \int_0^{\infty}
\prod_{ 1 \leq  v  \leq  m}  \sideset{}{'}\sum_{\rho_v} (X x_v)^{\rho_v - 1}
\cdot  \prod_{ 1 \leq  u \leq n} \omega(x_{u} - x_{0, u}) \cdot   e (\tau F(\mathbf{x}) )   \tsp d \mathbf{x}  \Big{|}  \tsp  d \tau
\notag
\\
\notag
&\ll&
X^{n - d } \sideset{}{'}\sum_{\rho_1, \ldots, \rho_m} | X^{\rho_1 - 1} \cdots X^{\rho_m - 1} |   \cdot
\\
&&
\notag
\int_{|\tau| < X^{\vartheta_0 + \lambda }} \Big{|} \int_{0}^{\infty} \cdots \int_0^{\infty}
x_1^{\rho_1 - 1} \cdots x_m^{\rho_m - 1} \prod_{ 1 \leq  u \leq n } \omega(x_{u} - x_{0, u}) \cdot  e (\tau F(\mathbf{x}) )   \tsp d \mathbf{x}   \Big{|} \tsp  d \tau
\\
\notag
&\ll&
X^{n - d} (\log X) \sideset{}{'}\sum_{\rho_1, \ldots, \rho_m} X^{\beta_1 - 1} \cdots X^{\beta_m - 1}.
\end{eqnarray}
Therefore, by combining  (\ref{eqn 1 m>0}), (\ref{eqn 2 m>0}), (\ref{eqn lem bdd W}) and (\ref{eqn 3 m>0}), we obtain
\begin{eqnarray}
\label{eqn 4 m>0}
&& \Big{|} \sum_{1 \leq q \leq X^{\vartheta_0}} \sum_{\substack{ 1 \leq a \leq q \\  \gcd(a,q) = 1 }}  \frac{1}{\phi(q)^n}  \int_{|\tau| < X^{\vartheta_0 + \lambda - d}  }  S_{\mathbf{j},
\emptyset} \left( \frac{a}{q} + \tau \right)  d \tau \Big{|}
\\
\notag
&\ll& \sum_{ 1 \leq r_1, \ldots, r_m \leq X^{\vartheta_0} } R^{-\kappa}  \sideset{}{^*}\sum_{\substack{ \chi_v (\textnormal{mod} \tsp r_v)  \\   (1 \leq v \leq m)  } } X^{n -
d} (\log X) \sideset{}{'}\sum_{\rho_1, \ldots, \rho_m} X^{\beta_1 - 1} \cdots X^{\beta_m - 1}
\\
\notag
&+&
\sum_{ 1 \leq r_1, \ldots, r_m \leq X^{\vartheta_0} }
R^{-\kappa}  \sideset{}{^*}\sum_{ \substack{ \chi_v (\textnormal{mod} \tsp r_v)  \\   (1 \leq v \leq m)  } }
\int_{|\tau| < X^{\vartheta_0 + \lambda - d}}  \widetilde{E}  \ d \tau. 
\end{eqnarray}

We begin by bounding the first term on the right hand side of (\ref{eqn 4 m>0}). By the definition of $R$ given in (\ref{defnR}), we clearly have $R^{-\kappa} \leq r_1^{-\kappa}$.
Let $D > 1$ and $A>0$. Then  by Lemma \ref{Gbdd} and Remark \ref{remZFR} we obtain
\begin{eqnarray}
\label{est 1 ell0}
&&
\sum_{ 1 \leq r_1, \ldots, r_m \leq X^{\vartheta_0} } R^{-\kappa}  \sideset{}{^*}\sum_{\substack{ \chi_v (\textnormal{mod} \tsp r_v)  \\   (1 \leq v \leq m)  } } X^{n -
d} (\log X) \sideset{}{'}\sum_{\rho_1, \ldots, \rho_m} X^{\beta_1 - 1} \cdots X^{\beta_m - 1}
\\
\notag
&\ll&
X^{n - d} (\log X) \left( \sum_{ \substack{ 1 \leq r_1 \leq X^{\vartheta_0}} } r_1^{-\kappa}  \sideset{}{^*}\sum_{\chi_1 (\textnormal{mod} \tsp r_1)}
\sideset{}{'}\sum_{\rho_1 } X^{\beta_1 - 1} \right)
\prod_{2 \leq v \leq m}  \
\sum_{ \substack{ 1 \leq r_v \leq X^{\vartheta_0}}} \  \sideset{}{^*}\sum_{\chi_v (\textnormal{mod} \tsp r_v)}
\sideset{}{'}\sum_{\rho_v} X^{\beta_v - 1}
\\
\notag
&\ll&  X^{n-d} (\log X) \sum_{  1 \leq r_1 \leq X^{\vartheta_0} } r_1^{-\kappa} \sideset{}{^*}\sum_{\chi_1 (\textnormal{mod} \tsp r_1)} \sideset{}{'}\sum_{\rho_1} X^{\beta_1
- 1}
\\
\notag
&\ll&  X^{n-d} (\log X)  \sum_{  1 \leq r_1 \leq (\log X)^D } \  \sideset{}{^*}\sum_{\chi_1 (\textnormal{mod} \tsp r_1)} \sum_{\substack{\rho_1 \in B_T \\ L(\rho_1, \chi_1) =
0 }} X^{\beta_1 - 1}
\\
\notag
&+&
X^{n-d} (\log X)^{- D \kappa + 1} \sum_{  1 \leq r_1 \leq X^{\vartheta_0} } \  \sideset{}{^*}\sum_{\chi_1 (\textnormal{mod} \tsp r_1)} \sideset{}{'}\sum_{\rho_1} X^{\beta_1 -
1}
\\
\notag
&\ll& X^{n-d} (\log X)^{- A} + X^{n - d} (\log X)^{- D \kappa + 1}.
\end{eqnarray}

Next we bound the second term on the right hand side of (\ref{eqn 4 m>0}). 
We define
$$
R_{\boldsymbol{\epsilon}} = \textnormal{lcm}  (r_{ \iota_1}, \ldots, r_{\iota_{\epsilon_1 + \cdots + \epsilon_m}}),
$$
where
$$
\{   \iota_1, \ldots, \iota_{\epsilon_1 + \cdots + \epsilon_m}  \} = \{ 1 \leq  i \leq m :  \epsilon_i = 1  \}.
$$
We have from (\ref{defnT}) and (\ref{explicit error}) that
$$
E(X) \ll (\log X)^2 X^{1 - \gamma}.
$$
Then by Lemma \ref{Gbdd} it follows that
\begin{eqnarray}
\label{est ell0 2}
&& \sum_{  1 \leq r_1, \ldots, r_m \leq X^{\vartheta_0} } R^{-\kappa}  \sideset{}{^*}\sum_{\substack{\chi_v (\textnormal{mod} \tsp r_v) \\  (1 \leq v \leq m)}}
\int_{|\tau| < X^{\vartheta_0 + \lambda - d}}
\widetilde{E} \ d \tau
\\
&\ll& \sum_{ \boldsymbol{\epsilon} \in \{ 0, 1 \}^{m} \backslash \{ \mathbf{0} \} }  X^{n - d + \vartheta_0 + \lambda} X^{(\vartheta_0 + \lambda - 1) (\epsilon_1 + \cdots +
\epsilon_m)} E(X)^{\epsilon_1 + \cdots + \epsilon_m} \cdot
\notag
\\
&&
\notag
\sum_{ \substack{  1 \leq r_i   \leq X^{\vartheta_0}  \\   (\epsilon_i = 1)} } R_{ \boldsymbol{\epsilon}  }^{- \kappa} \prod_{\epsilon_i = 1 } \  \sideset{}{^*}\sum_{\chi_{i} (\textnormal{mod} \tsp r_{i} ) } 1
\cdot  \prod_{ \epsilon_j = 0  } \,  \sum_{  1 \leq r_j \leq X^{\vartheta_0}} \ \sideset{}{^*}\sum_{\chi_j (\textnormal{mod} \tsp r_j)} \sideset{}{'}\sum_{\rho_j}
(\delta_{\min} X)^{\beta_j - 1}
\notag
\\
\notag
&\ll& \sum_{ \boldsymbol{\epsilon} \in \{ 0, 1 \}^{m} \backslash \{ \mathbf{0} \} }  X^{n - d + \vartheta_0 + \lambda} X^{(\vartheta_0 + \lambda - 1) (\epsilon_1 + \cdots +
\epsilon_m)} E(X)^{\epsilon_1 + \cdots + \epsilon_m} \sum_{ \substack{  1 \leq r_i   \leq X^{\vartheta_0}  \\   (\epsilon_i = 1)} } R_{ \boldsymbol{\epsilon}  }^{- \kappa}
\prod_{\epsilon_i = 1 } \phi(r_i)
\\
\notag
&\ll& (\log X)^2
\sum_{ \boldsymbol{\epsilon} \in \{ 0, 1 \}^{m} \backslash \{ \mathbf{0} \} }  X^{n - d + \vartheta_0 + \lambda} X^{(\vartheta_0 + \lambda - \gamma) (\epsilon_1 +
\cdots + \epsilon_m)} \sum_{ \substack{  1 \leq r_i   \leq X^{\vartheta_0}  \\   (\epsilon_i = 1)} } R_{ \boldsymbol{\epsilon}  }^{- \kappa} \prod_{\epsilon_i = 1 }
\phi(r_i).
\end{eqnarray}

In order to estimate the final expression, we use the following.
\begin{lem}
\label{div fxn lem}
Let $t \in \mathbb{R}_{>1}$ and $h \in \mathbb{N}$. Then
$$
\sum_{1 \leq x_1 \leq B} \cdots \sum_{1 \leq x_h \leq B} \frac{\phi(x_1) \cdots \phi(x_h) }{ \textnormal{lcm}(x_1, \ldots, x_h)^t  }
\ll
\begin{cases}
 B^{\varepsilon}  & \mbox{if } t \geq h + 1, \\
 B^{h - t + 1 + \varepsilon}  & \mbox{if } t < h + 1.
\end{cases}
$$
\end{lem}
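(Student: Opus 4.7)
My plan is to change the order of summation and group the terms by the value $L = \textnormal{lcm}(x_1, \ldots, x_h)$. Writing $T$ for the sum to be estimated, I would decompose it as $T = \sum_{L \geq 1} L^{-t} G(L, B)$, where
\[
G(L, B) = \sum_{\substack{ x_i \mid L,\ x_i \leq B\ (1 \leq i \leq h) \\ \textnormal{lcm}(x_1, \ldots, x_h) = L}} \prod_{i=1}^{h} \phi(x_i).
\]
The key ingredient is the classical identity $\sum_{x \mid L} \phi(x) = L$, which allows me to drop the lcm condition and factor the inner sum:
\[
G(L, B) \leq \sum_{x_i \mid L,\ x_i \leq B} \prod_{i=1}^h \phi(x_i) = \Bigl( \sum_{x \mid L,\ x \leq B} \phi(x) \Bigr)^h.
\]

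Next, I would split the outer sum at $L = B$. When $L \leq B$ the constraint $x_i \leq B$ is automatic, so the identity gives $G(L, B) \leq L^h$. When $L > B$ I would use the crude bound $\sum_{x \mid L,\ x \leq B} \phi(x) \leq B \cdot \tau(L)$, since there are at most $\tau(L)$ divisors of $L$, each contributing $\phi(x) \leq x \leq B$; hence $G(L, B) \leq B^h \tau(L)^h$. This gives
\[
T \leq \sum_{L \leq B} L^{h - t} \ +\ B^h \sum_{L > B} \frac{\tau(L)^h}{L^t}.
\]

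Applying the standard bound $\tau(L)^h \ll L^\varepsilon$ together with $\sum_{L > B} L^{-t + \varepsilon} \ll B^{1 - t + \varepsilon}$ (valid since $t > 1$), the second term is $\ll B^{h - t + 1 + \varepsilon}$. An elementary estimation of the first term gives $\ll 1$ when $t > h+1$, $\ll \log B$ when $t = h+1$, and $\ll B^{h - t + 1}$ when $1 < t < h+1$. Combining, both terms are $\ll B^{\varepsilon}$ when $t \geq h+1$ and both are $\ll B^{h - t + 1 + \varepsilon}$ when $1 < t < h+1$, yielding the claimed bound. The only subtle step is the treatment of the range $L > B$, where the lcm condition can no longer be tracked exactly; the crude bound $G(L, B) \leq (B\tau(L))^h$ does the job because the $\tau(L)^h$ overcount is absorbed in $L^\varepsilon$, and the $B^h$ prefactor matches precisely the $L^{h-t}$ scaling from the $L \leq B$ range so the two contributions balance at the critical threshold $t = h+1$.
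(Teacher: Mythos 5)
Your proof is correct, and it takes a genuinely different route from the paper's. The paper argues via a pointwise bound on the summand: it uses $\phi(x_i)\leq x_i\leq\textnormal{lcm}$ to absorb $\lfloor t-1\rfloor$ of the $\phi$'s into the power of $\textnormal{lcm}$, the fractional part $\{t-1\}$ against $x_1^{\{t-1\}}$, and then trivially bounds the remaining $\phi(x_i)\leq B$ to reduce, in both regimes of $t$, to the single sum $\sum_{x_1,\ldots,x_h}\textnormal{lcm}(x_1,\ldots,x_h)^{-1}\ll B^{\varepsilon}$, estimated by grouping on $x=\textnormal{lcm}$ and using $\#\{\mathbf{x}:\textnormal{lcm}(\mathbf{x})=x\}\leq\tau(x)^h\ll x^{\varepsilon}$. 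You instead reorganise the sum by $L=\textnormal{lcm}$ from the start and split at $L\approx B$, which is the natural scale at which the constraints $x_i\leq B$ begin to cut into the divisor set of $L$; the identity $\sum_{d\mid L}\phi(d)=L$ then handles the range $L\leq B$ exactly, while the crude bound $\sum_{d\mid L,\,d\leq B}\phi(d)\leq B\tau(L)$ covers $L>B$. Both arguments ultimately rest on $\tau(L)^h\ll L^{\varepsilon}$, but yours avoids the floor-function bookkeeping in the paper's case $t<h+1$ by replacing it with the exact convolution identity; the trade-off is that you must verify the two contributions match at the threshold $t=h+1$, which you do. Either approach yields the stated bound, and yours is arguably the cleaner of the two.
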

\begin{proof}
First we recall the following well-known estimate for the divisor function
$$
\sum_{m| x} 1 \ll x^{\varepsilon}.
$$
Suppose $ t \geq h + 1$. Then since
$$
\frac{\phi(x_1) \cdots \phi(x_h) }{ \textnormal{lcm}(x_1, \ldots, x_h)^t  }
\leq
\frac{1}{\textnormal{lcm}(x_1, \ldots, x_h)},
$$
it follows that
\begin{eqnarray}
\notag
&&\sum_{1 \leq x_1 \leq B} \cdots \sum_{1 \leq x_h \leq B} \frac{\phi(x_1) \cdots \phi(x_h) }{ \textnormal{lcm}(x_1, \ldots, x_h)^t  }
\\
\notag
&\leq&
\sum_{1 \leq x_1 \leq B} \cdots \sum_{1 \leq x_h \leq B} \frac{1}{\textnormal{lcm}(x_1, \ldots, x_h)}
\\
\notag
&\leq&
\sum_{1 \leq x \leq B^{h}} \frac{  \#\{ (x_1, \ldots, x_h) \in \mathbb{N}^h: \textnormal{lcm}(x_1, \ldots, x_h) = x   \} }{ x }
\\
\notag
&\leq&
\sum_{1 \leq x \leq B^{h}} \frac{  \left( \sum_{m| x} 1 \right)^h  }{ x }
\\
&\ll&
B^{\varepsilon}.
\notag
\end{eqnarray}
On the other hand, suppose $ t < h + 1$. Then since
$$
\frac{\phi(x_1) \cdots \phi(x_h) }{ \textnormal{lcm}(x_1, \ldots, x_h)^t }
\leq
\frac{\phi(x_1) \cdots \phi(x_{h - \lfloor t - 1 \rfloor}) }{ \textnormal{lcm}(x_1, \ldots, x_h)^{1 + (t-1) - \lfloor t - 1 \rfloor  } }
\leq
\frac{x_1 \cdots x_{h - \lfloor t - 1 \rfloor} }{ \textnormal{lcm}(x_1, \ldots, x_h)  x_1^{(t-1) - \lfloor t - 1 \rfloor  } },
$$
it follows that
\begin{eqnarray}
\notag
&&\sum_{1 \leq x_1 \leq B} \cdots \sum_{1 \leq x_h \leq B} \frac{\phi(x_1) \cdots \phi(x_h) }{ \textnormal{lcm}(x_1, \ldots, x_h)^t  }
\\
\notag
&\leq& B^{h - \lfloor t - 1 \rfloor - 1 } \sum_{1 \leq x_1 \leq B} \cdots \sum_{1 \leq x_h \leq B} \frac{ x_1 }{ \textnormal{lcm}(x_1, \ldots, x_h) x_1^{(t-1) - \lfloor t - 1 \rfloor  }  }
\\
\notag
&\leq&
B^{h - \lfloor t - 1 \rfloor - (t-1) + \lfloor t - 1 \rfloor  } \sum_{1 \leq x_1 \leq B} \cdots \sum_{1 \leq x_h \leq B} \frac{1}{\textnormal{lcm}(x_1, \ldots, x_h) }
\\
\notag
&\leq&
B^{h - (t - 1) } \sum_{1 \leq x \leq B^{h}} \frac{  \#\{ (x_1, \ldots, x_h) \in \mathbb{N}^h: \textnormal{lcm}(x_1, \ldots, x_h) = x   \} }{ x }
\\
\notag
&\leq&
B^{h - t + 1} \sum_{1 \leq x \leq B^{h}} \frac{  \left( \sum_{m| x} 1 \right)^h  }{ x }
\\
&\ll&
B^{h - t + 1 + \varepsilon}.
\notag
\end{eqnarray}
\end{proof}

We now estimate the summands in (\ref{est ell0 2}).
Suppose $\kappa \geq   1 + \epsilon_1 + \cdots + \epsilon_m$. In this case, by Lemma \ref{div fxn lem} we have
\begin{eqnarray}
\label{div est 1}
&& X^{n - d + \vartheta_0 + \lambda} X^{(\vartheta_0 + \lambda - \gamma) (\epsilon_1 + \cdots + \epsilon_m)}
\sum_{ \substack{  1 \leq r_i   \leq X^{\vartheta_0}  \\   (\epsilon_i = 1)} } R_{ \boldsymbol{\epsilon}  }^{- \kappa} \prod_{\epsilon_i = 1 } \phi(r_i)
\\
\notag
&\ll&
X^{n - d + \vartheta_0 + \lambda + (\vartheta_0 + \lambda - \gamma ) (\epsilon_1 + \cdots + \epsilon_m) + \varepsilon}
\\
\notag
&\ll&
X^{n - d - \varepsilon},
\end{eqnarray}
because $\gamma > 2 \vartheta_0 + 2 \lambda$.
On the other hand, suppose $\kappa < 1 + \epsilon_1 + \cdots + \epsilon_m$. In this case,  by Lemma \ref{div fxn lem}  we have
\begin{eqnarray}
\label{div est 2}
&& X^{n - d + \vartheta_0 + \lambda} X^{(\vartheta_0 + \lambda - \gamma) (\epsilon_1 + \cdots + \epsilon_m)} \sum_{ \substack{  1 \leq r_i   \leq X^{\vartheta_0}  \\
(\epsilon_i = 1)} } R_{ \boldsymbol{\epsilon}  }^{- \kappa} \prod_{\epsilon_i = 1 } \phi(r_i)
\\
\notag
&\ll&
X^{n - d + \vartheta_0 + \lambda + (2 \vartheta_0 + \lambda - \gamma ) (\epsilon_1 + \cdots + \epsilon_m) - (\kappa - 1) \vartheta_0 + \varepsilon  }
\\
\notag
&\ll&
X^{n - d + \vartheta_0 + \lambda + (2 \vartheta_0 + \lambda - \gamma ) (\kappa - 1)   - (\kappa - 1) \vartheta_0 + \varepsilon }
\\
\notag
&\ll&
X^{n - d + \vartheta_0 + \lambda + (\vartheta_0 + \lambda - \gamma ) (\kappa - 1)  + \varepsilon }
\\
\notag
&\ll&
X^{n - d - \varepsilon},
\end{eqnarray}
because $\gamma > 2 \vartheta_0 + 2 \lambda$ and $\kappa > 2$.
Therefore, by combining (\ref{eqn 4 m>0}),
(\ref{est 1 ell0}), (\ref{est ell0 2}), (\ref{div est 1}) and (\ref{div est 2}),
we obtain
\begin{eqnarray}
\label{est1}
\Big{|} \sum_{1 \leq q \leq X^{\vartheta_0}} \sum_{ \substack{ 1 \leq a \leq q \\  \gcd(a,q) = 1 } } \frac{1}{\phi(q)^n}  \int_{|\tau| < X^{\vartheta_0 + \lambda- d} }
S_{\mathbf{j}, \emptyset } \left( \frac{a}{q} + \tau \right)  d \tau \Big{|}
\ll
X^{n - d} (\log X)^{- C}
\end{eqnarray}
for any $C >0$.

\subsection{Case $\mathbf{k} \neq \emptyset$}
\label{secm=0}
In this case, we only need to consider $q$ divisible by $\widetilde{r}$.
Without loss of generality let $\mathbf{j} = (1, \ldots, m)$ and $\mathbf{k} = (m+1, \ldots, m+{\ell})$.
First we suppose $m=0$.
By the same calculations as in (\ref{usinglem7.4}), we have
\begin{eqnarray}
\notag
\sum_{\substack{ 1 \leq q \leq X^{\vartheta_0} \\  \widetilde{r} |q   }}  \sum_{\substack{ 1 \leq a \leq q \\  \gcd(a,q) = 1 }} \frac{1}{\phi(q)^n} | \mathcal{A}(q, a;
\mathbf{i}; \emptyset ; (1, \ldots, \ell)  ) |
\notag
\ll \widetilde{r}^{- \kappa}.
\end{eqnarray}
Therefore, by Proposition \ref{prop osc int} we obtain
\begin{eqnarray}
\label{est2}
&& \Big{|} \sum_{ \substack{1 \leq q \leq X^{\vartheta_0} \\ \widetilde{r} | q }}    \sum_{\substack{ 1 \leq a \leq q \\  \gcd (a,q) = 1 }}
\frac{ (-1)^{\ell} }{\phi(q)^n}
\int_{|\tau| < X^{\vartheta_0 + \lambda
- d}}  S_{\emptyset, \mathbf{k}} \left( \frac{a}{q} + \tau \right)  d \tau \Big{|}
\\
\notag
&\leq& \sum_{ \substack{1 \leq q \leq X^{\vartheta_0} \\ \widetilde{r} | q }}  \sum_{\substack{ 1 \leq a \leq q \\ \gcd (a,q) = 1 }}   \frac{ (-1)^{\ell} }{\phi(q)^n} | \mathcal{A}(q, a;
\mathbf{i}; \emptyset ; (1, \ldots, \ell)  ) | \cdot
\\
\notag
&&
 \int_{|\tau| < X^{\vartheta_0 + \lambda - d}} \Big{|}  \int_{0}^{\infty} \cdots \int_0^{\infty}
x_{1}^{\widetilde{\beta} - 1} \cdots x_{\ell}^{\widetilde{\beta} - 1} \tsp \varpi(\mathbf{x}) \tsp  e (\tau F(\mathbf{x}) )   \tsp d \mathbf{x} \Big{|}  \tsp  d \tau
\\
&\ll&
\widetilde{r}^{-\kappa} X^{n - d + \ell ( \widetilde{\beta} - 1) }
\int_{|\tau| < X^{\vartheta_0 + \lambda} } \Big{|} \int_{0}^{\infty} \cdots \int_0^{\infty}
x_{1}^{\widetilde{\beta} - 1} \cdots x_{\ell}^{\widetilde{\beta} - 1} \cdot
\notag
\\
&&
\prod_{1 \leq u \leq n} \omega(x_{u} - x_{0, u}) \cdot e (\tau F(\mathbf{x}) )   \tsp d \mathbf{x}  \Big{|} \tsp
d \tau
\notag
\\
&\ll&
\widetilde{r}^{-\kappa} X^{n - d + \ell ( \widetilde{\beta} - 1) } (\log X)
\notag
\\
&\ll&
X^{n - d} (\log X)^{-A}
\notag
\end{eqnarray}
for any $A >0$, where the final inequality follows from Remark \ref{remZFR}. 

\subsubsection{Case $\ell > 0$ and $m > 0$}
In this case, we have
\begin{eqnarray}
\label{eqn 1 m>0+}
&& \Big{|} \sum_{ \substack{ 1 \leq q \leq X^{\vartheta_0} \\  \widetilde{r}|q  } } \sum_{ \substack{ 1 \leq a \leq q \\  \gcd (a,q) = 1 } }
\frac{ (-1)^{\ell}  }{\phi(q)^n}
\int_{|\tau| < X^{\vartheta_0 +
\lambda - d}}  S_{\mathbf{j}, \mathbf{k} } \left( \frac{a}{q} + \tau \right)  d \tau \Big{|}
\\
\notag
&=& \Big{|} \sum_{ \substack{ 1 \leq q \leq X^{\vartheta_0} \\  \widetilde{r}|q  } }  \sum_{ \substack{ \chi'_1, \ldots, \chi'_m \\ (\textnormal{mod }q) } }  \sum_{\substack{
1 \leq a \leq q \\ \gcd (a,q) = 1 }} \frac{ (-1)^{\ell} }{\phi(q)^n} \ \mathcal{A}(q, a; \mathbf{i};  (1, \chi'_1), \ldots, (m, \chi'_m) ; \mathbf{k} ) \cdot  \\
&& \int_{|\tau| < X^{\vartheta_0  + \lambda- d}}  \mathcal{W}(\tau; \mathbf{i}; (1, \chi'_1), \ldots, (m, \chi'_m) ; \mathbf{k} )  \tsp d \tau  \Big{|}.
\notag
\end{eqnarray}
As in Section \ref{sec m>0}, let us denote $\chi'_v = \chi_v \chi^0$, where $\chi_v$ is the primitive character modulo $r_v$ which induces $\chi'_v$,
and $\chi^0$ is the principal character modulo $q$. We also denote
\begin{eqnarray}
\notag
R= \textnormal{lcm}(r_1, \ldots, r_m).
\end{eqnarray}
We let 
$$
\delta_{\min} =  \min_{1 \leq u \leq n} (x_{0,u} - \delta).
$$
Let $p \in [(x_{0,u} - \delta) X, (x_{0,u} + \delta) X]$ be a prime.
Since $q \leq X^{\vartheta_0} < \delta_{\min} X$, we have $\gcd(p,q) = 1$ and it follows that
$\chi_v(p) \chi^{0}(p) = \chi_v(p)$.
Consequently, we obtain
$$
\mathcal{W}(\tau; \mathbf{i}; (1, \chi'_1), \ldots, (m, \chi'_m) ; \mathbf{k}  )
= \mathcal{W}(\tau; \mathbf{i}; (1, \chi_1), \ldots, (m, \chi_m) ; \mathbf{k}  ).
$$
By the same calculations as in (\ref{usinglem7.4}), we have
$$
\sum_{\substack{ 1 \leq q \leq X^{\vartheta_0} \\  \widetilde{r} | q  \\  R | q   }}  \sum_{\substack{ 1 \leq a \leq q \\ \gcd (a,q) = 1  }} \frac{1}{\phi(q)^n} | \mathcal{A}(q, a;
\mathbf{i}; (1, \chi_1 \chi^0), \ldots, (m, \chi_m \chi^0) ; \mathbf{k} ) | \ll R^{- \kappa}.
$$
Therefore, the term on the right hand side of (\ref{eqn 1 m>0+}) can be rewritten as
\begin{eqnarray}
\label{eqn 2 m>0+}
&& \Big{|} \sum_{   1 \leq r_1, \ldots, r_m \leq X^{\vartheta_0} } \  \sideset{}{^*}\sum_{\substack{\chi_v (\textnormal{mod} \tsp r_v)  \\  (1 \leq v \leq m) } }
\sum_{\substack{ 1 \leq q \leq X^{\vartheta_0}  \\  \widetilde{r} | q  \\  R |q   }}   \sum_{\substack{ 1 \leq a \leq q \\  \gcd(a,q) = 1 }} \frac{  (-1)^{\ell}  }{\phi(q)^n} \tsp
\mathcal{A}(q, a; \mathbf{i}; (1, \chi_1 \chi^0), \ldots, (m, \chi_m \chi^0) ;\mathbf{k} ) \cdot
\\
\notag
&& \int_{|\tau| < X^{\vartheta_0 + \lambda - d} } \mathcal{W}(\tau; \mathbf{i}; (1, \chi_1), \ldots, (m, \chi_m) ; \mathbf{k} ) \tsp d \tau
\Big{|}
\\
\notag
&\ll& \sum_{   1 \leq r_1, \ldots, r_m \leq X^{\vartheta_0} } R^{-\kappa}  \sideset{}{^*}\sum_{\substack{\chi_v (\textnormal{mod} \tsp r_v) \\ (1 \leq v \leq m) } }
\int_{|\tau| < X^{\vartheta_0 + \lambda- d} }
| \mathcal{W}(\tau; \mathbf{i}; (1, \chi_1), \ldots, (m, \chi_m) ; \mathbf{k}  )  |  \tsp d \tau.
\end{eqnarray}
We substitute (\ref{eqn lem bdd W}) into (\ref{eqn 2 m>0+}), and
apply Proposition \ref{prop osc int} to the first of the resulting terms in the following manner
\begin{eqnarray}
\label{eqn 3 m>0+}
\\
\notag
&&
\int_{|\tau| < X^{\vartheta_0 + \lambda - d} } \Big{|}  \int_{0}^{\infty} \cdots \int_0^{\infty}
\prod_{ 1 \leq  v \leq m}  \sideset{}{'}\sum_{\rho_v} x_v^{\rho_v - 1}
\cdot x_{m+1}^{\widetilde{\beta} - 1 } \cdots x_{m + \ell}^{\widetilde{\beta} - 1 } \tsp \varpi(\mathbf{x}) \tsp  e(\tau F(\mathbf{x}))   \tsp d \mathbf{x}  \Big{|}  \tsp  d
\tau
\\
&=&
\notag
X^{n-d} \int_{|\tau| < X^{\vartheta_0 + \lambda} } \Big{|}  \int_{0}^{\infty} \cdots \int_0^{\infty}
\prod_{1 \leq v \leq m}  \sideset{}{'}\sum_{\rho_v} (X x_v)^{\rho_v - 1}
\cdot
\\
&&  (X x_{m+1})^{\widetilde{\beta} - 1 } \cdots (X x_{m + \ell})^{\widetilde{\beta} - 1 } \prod_{ 1 \leq  u \leq n} \omega(x_{u} - x_{0, u}) \cdot   e (\tau F(\mathbf{x}) )   \tsp d
\mathbf{x}  \Big{|}  \tsp  d \tau
\notag
\\
\notag
&\ll&
X^{\ell (\widetilde{\beta} - 1) } X^{n - d } \sideset{}{'}\sum_{\rho_1, \ldots, \rho_m} | X^{\rho_1 - 1} \cdots X^{\rho_m - 1} |   \cdot
\\
&&
\notag
\int_{|\tau| < X^{\vartheta_0  + \lambda}} \Big{|} \int_{0}^{\infty} \cdots \int_0^{\infty}
x_1^{\rho_1 - 1} \cdots x_m^{\rho_m - 1} x_{m+1}^{\widetilde{\beta} - 1 } \cdots x_{m + \ell}^{\widetilde{\beta} - 1 }  \prod_{1 \leq  u \leq n} \omega(x_{u} - x_{0, u})
\cdot  e (\tau F(\mathbf{x}) )   \tsp d \mathbf{x}   \Big{|} \tsp  d \tau
\\
\notag
&\ll&
X^{n - d} (\log X) \sideset{}{'}\sum_{\rho_1, \ldots, \rho_m} X^{\beta_1 - 1} \cdots X^{\beta_m - 1}.
\end{eqnarray}
Therefore, by combining (\ref{eqn 1 m>0+}), (\ref{eqn 2 m>0+}), (\ref{eqn lem bdd W}) and (\ref{eqn 3 m>0+}), we obtain
\begin{eqnarray}
\label{eqn 4 m>0+}
&& \Big{|} \sum_{  \substack{ 1 \leq q \leq X^{\vartheta_0}  \\  \widetilde{r} | q  } } \sum_{\substack{ 1 \leq a \leq q \\  \gcd(a,q) = 1 }}
\frac{  (-1)^{\ell} }{\phi(q)^n}
\int_{|\tau| < X^{\vartheta_0 + \lambda - d}  }  S_{\mathbf{j},
\mathbf{k}} \left( \frac{a}{q} + \tau \right)  d \tau \Big{|}
\\
\notag
&\ll& \sum_{ 1 \leq r_1, \ldots, r_m \leq X^{\vartheta_0} } R^{-\kappa}  \sideset{}{^*}\sum_{\substack{ \chi_v (\textnormal{mod} \tsp r_v)  \\   (1 \leq v \leq m)  } } X^{n -
d} (\log X) \sideset{}{'}\sum_{\rho_1, \ldots, \rho_m} X^{\beta_1 - 1} \cdots X^{\beta_m - 1}
\\
\notag
&+&
\sum_{ 1 \leq r_1, \ldots, r_m \leq X^{\vartheta_0} }
R^{-\kappa}  \sideset{}{^*}\sum_{ \substack{ \chi_v (\textnormal{mod} \tsp r_v)  \\   (1 \leq v \leq m)  } }
\int_{|\tau| < X^{\vartheta_0 + \lambda - d}}  \widetilde{E}  \ d \tau.
\end{eqnarray}
Since the right hand side of (\ref{eqn 4 m>0+}) is identical to that of (\ref{eqn 4 m>0}),
we can bound it in the same manner. Therefore, it follows that
\begin{eqnarray}
\Big{|} \sum_{   \substack{ 1 \leq q \leq X^{\vartheta_0}  \\  \widetilde{r} | q  } } \sum_{ \substack{ 1 \leq a \leq q \\  \gcd(a,q) = 1 } } \frac{ (-1)^{\ell} }{\phi(q)^n}  \int_{|\tau| < X^{\vartheta_0 + \lambda- d} }
S_{\mathbf{j}, \mathbf{k} } \left( \frac{a}{q} + \tau \right)  d \tau \Big{|}
\ll
X^{n - d} (\log X)^{- C}
\label{est3}
\end{eqnarray}
for any $C >0$.

\subsection{Proof of Proposition \ref{prop major}} \label{secmajormainterm}
We combine (\ref{S and S1}) and (\ref{MAJOR1}) with the estimates from Sections \ref{sec m>0} and \ref{secm=0}, namely (\ref{est1}), (\ref{est2}) and (\ref{est3}).
As a result, we obtain
\begin{eqnarray}
\label{eqnsec3.3}
&&\int_{\mathfrak{M}^+(\vartheta_0)} S(\alpha) \tsp d\alpha
\\
\notag
&&\int_{\mathfrak{M}^+(\vartheta_0)} S^*(\alpha) \tsp d\alpha
+ O \left(  X^{n - d + 3 \vartheta_0  + \lambda - \frac12}  \right)
\\
&=&
\notag
\sum_{1 \leq q \leq X^{\vartheta_0}} \sum_{\substack{ 1 \leq a \leq q \\  \gcd (a,q) = 1 }} \frac{1}{\phi(q)^n} \tsp \mathcal{A} (q, a; (1, \ldots, n) ; \emptyset ;  \emptyset )
\cdot
\int_{|\tau| < X^{\vartheta_0 + \lambda - d}}
\mathcal{W} (\tau;(1, \ldots, n) ;  \emptyset ; \emptyset ) \tsp d \tau
\\
\notag
&+& O \left(X^{n - d + 4 \vartheta_0 + 2 \lambda - 1} +  X^{n - d + 3 \vartheta_0  + \lambda - \frac12}  +  \frac{X^{n - d}}{(\log X)^{A} } \right)
\end{eqnarray}
for any $A > 0$. We now estimate the singular series and the singular integral by standard arguments; we keep the details to a minimum.
We begin with the singular series (see for example \cite[Section 7]{CM} for more details). Since $\textnormal{codim} \tsp V^*_{F} > 4 (2d-1) 4^{d}$,
by invoking\footnote{We may invoke \cite[Lemma 10]{CM} instead, but since the proof contains a minor oversight and Lemma \ref{exp sum modp} suffices for our purpose,
we take this approach. In the final paragraph in the proof of \cite[Lemma 10]{CM}, they make the following statement under the assumptions that $t \in \ZZ_{> d}$
and $a_1, \ldots, a_r \in \mathbb{U}_q$: Set $P = p^{t-1}$ and
$q_1 = p^{t-d}$. Then, for each $i = 1, \ldots, r$,
$$
2|q'a_i - a'_i q_1| \leq P^{-(d-1) + (d-1) r \theta}
$$
and
$$
1 \leq q' \leq P^{(d-1) r \theta}
$$
cannot be satisfied if $\theta < 1/(d-1)r$.

However, this statement can be seen to be not true by the following counterexample.
Let $d \geq 2$, $r=1$ and
$$
\frac{t - d}{  (t-1) (d-1)  } \leq \theta < \frac{1}{d-1}.
$$
Then we can choose
$$
q'= q_1 = p^{t-d}  =  p^{(t-1) \frac{t-d}{t-1}  }   \leq   P^{ (d-1) \theta }
\quad
\textnormal{ and }
\quad
a'_1 = a_1,
$$
which yields $q'a_1 - a'_1 q_1 = 0.$}
Lemma \ref{exp sum modp} it follows that there exists $\xi_1 >0$ such that
\begin{eqnarray}
\notag
&&\sum_{1 \leq q \leq X^{\vartheta_0}} \sum_{\substack{ 1 \leq a \leq q \\ \gcd (a,q) = 1 }} \frac{1}{\phi(q)^n} \tsp \mathcal{A} (q, a; (1, \ldots, n) ; \emptyset ;  \emptyset )
\\
\notag
&=&
\sum_{1 \leq q \leq X^{\vartheta_0}} \sum_{\substack{ 1 \leq a \leq q \\ \gcd  (a,q) = 1 }} \frac{1}{\phi(q)^n} \sum_{\substack{ \mathbf{h} \in \mathbb{U}_q^n } }  e \left(
\frac{a}{q}  F(\mathbf{h} )   \right)
\\
&=& c_1(F) +
O \left( \sum_{q >  X^{\vartheta_0}} \sum_{\substack{ 1 \leq a \leq q \\  \gcd (a,q) = 1 }} \frac{1}{\phi(q)^n}
q^{n - \frac{\textnormal{codim} \tsp V^*_{F}}{ 2(2d-1) 4^d } + \varepsilon} \right)
\notag
\\
\notag
&=& c_1(F) +
O \left( \sum_{q >  X^{\vartheta_0}}
q^{1 - \frac{\textnormal{codim} \tsp V^*_{F}}{ 2(2d-1) 4^d } + \varepsilon} \right)
\\
\notag
&=& c_1(F) +
O ( X^{- \xi_1}),
\end{eqnarray}
where $c_1(F) \geq 0$ is a constant depending only on $F$.

Next we deal with the singular integral (see for example \cite[Section 5]{BP} or \cite[\S 2]{SS} for more details). The same argument to establish \cite[(5.5)]{BP} yields
$$
\int_{0}^{\infty} \cdots \int_{0}^{\infty}
\prod_{1 \leq u \leq n} \omega(x_{u} - x_{0, u}) \cdot e( \tau F(\mathbf{x}))  \tsp  d \mathbf{x} \ll
\min \left( 1,  |\tau|^{ - \frac{ \textnormal{codim} \tsp V^*_{F}}{(d-1) 2^{d-1}}  +  \varepsilon}  \right).
$$
Therefore, since $\textnormal{codim} \tsp V^*_{F} > (d-1) 2^{d-1} + 1$, it follows that there exists $\xi_2 >0$ such that
\begin{eqnarray}
\notag
&&\int_{|\tau| < X^{\vartheta_0 + \lambda - d}}
\mathcal{W}(\tau;(1, \ldots, n) ;  \emptyset ; \emptyset ) \tsp d \tau
\\
\notag
&=&
X^{n-d} \int_{|\tau| < X^{\vartheta_0 + \lambda}} \int_{0}^{\infty} \cdots \int_{0}^{\infty}
\prod_{1 \leq  u \leq n} \omega(x_{u} - x_{0, u}) \cdot e( \tau F(\mathbf{x}))  \tsp  d \mathbf{x} \tsp d \tau
\\
\notag
&=&  c_2(F; \omega, \mathbf{x}_0) \, X^{n - d}
+
O \left(  X^{n - d}   \int_{|\tau| \geq X^{\vartheta_0 + \lambda}} \min \left( 1,  |\tau|^{ - \frac{ \textnormal{codim} \tsp V^*_{F}}{(d-1) 2^{d-1}}  +  \varepsilon}
\right)  d \tau   \right)
\\
\notag
&=& c_2(F; \omega, \mathbf{x}_0) \, X^{n - d} + O(X^{n-d - \xi_2}),
\end{eqnarray}
where $c_2(F; \omega, \mathbf{x}_0) \geq  0$ is a constant depending only on $F$, $\omega$ and $\mathbf{x}_0$.
The constant $c_1(F) c_2(F; \omega, \mathbf{x}_0)$ is a product of local densities; in fact, $c_1(F) c_2(F; \omega, \mathbf{x}_0) > 0$ provided $F$ satisfies the local
conditions \textnormal{($\star$)} and $\varpi$ is as in this section. Therefore, by combining these estimates with (\ref{eqnsec3.3}), we obtain Proposition \ref{prop major} with
$c(F; \omega, \mathbf{x}_0) = c_1(F) c_2(F; \omega, \mathbf{x}_0)$.

\section{Preliminaries for the minor arcs analysis}
\label{prem minor}
We collect some definitions and results from \cite{Y2}, which we will need in the next section.
Let $F \in \mathbb{Z}[x_1, \ldots, x_n]$ be a homogeneous form.
Given partitions of variables $\mathbf{x} = (\mathbf{z}, \mathbf{w})$ and
$\mathbf{z} = (\mathbf{s}, \mathbf{t})$,  let us denote
$$
F_{\mathbf{z}}(\mathbf{z}) = F(\mathbf{z}, \mathbf{0}) = F(\mathbf{x}) |_{\mathbf{w} = \mathbf{0}}
$$
and
\begin{equation}
\label{den G 1}
\mathfrak{G}(\mathbf{s}, \mathbf{t} ) = F_{\mathbf{z}}(\mathbf{s}, \mathbf{t}) - F_{\mathbf{z}}(\mathbf{s}, \mathbf{0}) - F_{\mathbf{z}}(\mathbf{0}, \mathbf{t}).
\end{equation}
It is clear that $\mathfrak{G}(\mathbf{s}, \mathbf{0} )$ and $\mathfrak{G}(\mathbf{0}, \mathbf{t} )$ are the zero polynomials.
With these notation we consider two cases based on the structure of $F$.
\begin{defn}
\label{dichotomy}
We define a structural dichotomy of $F$ with respect to $\mathcal{C}_0 > 0$ as follows:
\begin{enumerate}[(I)]
\item There exist partitions of variables $\mathbf{x} = (\mathbf{z}, \mathbf{w})$ and
$\mathbf{z} = (\mathbf{s}, \mathbf{t})$ such that $\textnormal{codim} \tsp V^*_{\mathfrak{G}} > \mathcal{C}_0$.
\item Given any partitions $\mathbf{x} = (\mathbf{z}, \mathbf{w})$ and
$\mathbf{z} = (\mathbf{s}, \mathbf{t})$, we have $\textnormal{codim} \tsp V^*_{\mathfrak{G}} \leq \mathcal{C}_0$.
\end{enumerate}
\end{defn}
Let us set
\begin{equation}
\label{defn C0}
\mathcal{C}_0 =  4 (d-1) 2^d + 1.
\end{equation}
The situation when $F$ satisfies (I) is easier to handle, and this is dealt with in Section \ref{conc}.
We deal with the more challenging case when $F$ satisfies (II) in Section \ref{minor}. The following
result \cite[Lemma 4.3]{Y2} played a crucial role in the author's previous work \cite{Y2}.
\begin{lem}
\label{lemma rank concn}
Let $F \in \mathbb{Z}[x_1, \ldots, x_n]$ be a homogeneous form of degree $d \geq 2$ satisfying \textnormal{(II)}. Suppose we have a partition of the $\mathbf{x}$ variables
into $H$ sets $\mathbf{x} = (\mathbf{y}_0, \ldots, \mathbf{y}_{H-1})$. For each $0 \leq i \leq H-1$, let us denote
$$
F_{i} (\mathbf{y}_i) = F(\mathbf{x})|_{ \mathbf{y}_{\ell} = \mathbf{0} \tsp (\ell \not = i) }.
$$
Then there exists $j_0 \in \{ 0, 1, \ldots, H - 1 \}$ satisfying
\begin{eqnarray}
\label{rank concn}
\textnormal{codim} \tsp V^*_{F_{j_0}} \geq    \frac{\textnormal{codim} \tsp V^*_{F}- (H-1) \mathcal{C}_0 }{H}.
\end{eqnarray}
\end{lem}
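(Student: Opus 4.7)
The plan is to combine a telescoping decomposition of $F$ with the structural hypothesis (II) and then invoke an easy subadditivity property of the singular-locus codimension, after which the result is pigeonhole. I introduce the partial restrictions $F^{(i)}(\mathbf{y}_0, \ldots, \mathbf{y}_i) = F(\x)|_{\mathbf{y}_\ell = \mathbf{0}\,(\ell > i)}$, so that $F^{(0)} = F_0$ and $F^{(H-1)} = F$. For each $1 \leq i \leq H-1$ I would apply Definition~\ref{dichotomy} to the tripartition $\s = (\mathbf{y}_0, \ldots, \mathbf{y}_{i-1})$, $\t = \mathbf{y}_i$, $\w = (\mathbf{y}_{i+1}, \ldots, \mathbf{y}_{H-1})$. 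With this choice one checks directly that $F_{\z} = F^{(i)}$, $F_{\z}(\s, \mathbf{0}) = F^{(i-1)}$, and $F_{\z}(\mathbf{0}, \t) = F_i$, so the polynomial $\mathfrak{G}_i$ produced by (\ref{den G 1}) satisfies
$$
F^{(i)} \;=\; F^{(i-1)} + F_i + \mathfrak{G}_i,
$$
and hypothesis (II) immediately yields $\textnormal{codim}\, V^*_{\mathfrak{G}_i} \leq \mathcal{C}_0$.

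Iterating this identity from $i=1$ up to $i=H-1$ gives the global decomposition $F = \sum_{i=0}^{H-1} F_i + \sum_{i=1}^{H-1} \mathfrak{G}_i$. I then invoke the elementary bound $\textnormal{codim}\, V^*_{A+B} \leq \textnormal{codim}\, V^*_A + \textnormal{codim}\, V^*_B$, which follows at once from $V^*_A \cap V^*_B \subseteq V^*_{A+B}$ together with $\dim(A \cap B) \geq \dim A + \dim B - n$. Applied inductively this yields
$$
\textnormal{codim}\, V^*_F \;\leq\; \textnormal{codim}\, V^*_{\sum_i F_i} + \sum_{i=1}^{H-1}\textnormal{codim}\, V^*_{\mathfrak{G}_i} \;\leq\; \textnormal{codim}\, V^*_{\sum_i F_i} + (H-1)\mathcal{C}_0.
$$
Because the $F_i$ live in pairwise disjoint variable blocks, $V^*_{\sum_i F_i} \subseteq \mathbb{A}^n_{\CC}$ is exactly the Cartesian product of the intrinsic singular loci $V^*_{F_i} \subseteq \mathbb{A}^{|\mathbf{y}_i|}_{\CC}$, so in fact $\textnormal{codim}\, V^*_{\sum_i F_i} = \sum_{i=0}^{H-1}\textnormal{codim}\, V^*_{F_i}$. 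Rearranging gives $\sum_i \textnormal{codim}\, V^*_{F_i} \geq \textnormal{codim}\, V^*_F - (H-1)\mathcal{C}_0$, and the conclusion follows by pigeonhole.

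The only non-routine point in the argument is identifying the correct telescope: at stage $i$ one must verify that $F^{(i)}$ really is of the shape $F_{\z}$ for the chosen tripartition $(\s,\t,\w)$, and that the two restrictions $F^{(i)}(\s,\mathbf{0})$ and $F^{(i)}(\mathbf{0},\t)$ coincide respectively with $F^{(i-1)}$ and $F_i$. This is true by construction but is precisely the step where the bound $(H-1)\mathcal{C}_0$ (rather than something larger) arises, so it is the only place where hypothesis (II) enters, and thus the main obstacle one must navigate in writing the proof cleanly.
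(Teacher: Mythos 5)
Your argument is correct, and it is the natural route (the paper itself defers to \cite[Lemma~4.3]{Y2}, which proceeds along essentially the same lines: telescoping restrictions, the structural dichotomy applied to each increment, subadditivity, and pigeonhole). Two steps you leave implicit are worth spelling out: the affine dimension theorem you invoke for $\dim(V^*_A\cap V^*_B)\geq\dim V^*_A+\dim V^*_B - n$ is stated for irreducible subvarieties, but here $V^*_A$ and $V^*_B$ are cones (the gradient components are homogeneous), so every irreducible component passes through the origin and the bound applies after passing to maximal-dimension components; and hypothesis~(II) bounds $\textnormal{codim}\,V^*_{\mathfrak G_i}$ in the ambient space $\mathbb A^{|\mathbf z|}$ of the $\mathbf z$-variables, whereas your subadditivity chain takes place in $\mathbb A^n$, which is fine since regarding a polynomial as one in a larger set of variables on which it does not depend leaves the codimension of its singular locus unchanged. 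With these two observations filled in, the telescoping decomposition $F=\sum_i F_i+\sum_{i\geq 1}\mathfrak G_i$, the estimate $\textnormal{codim}\,V^*_{\sum_i F_i}=\sum_i\textnormal{codim}\,V^*_{F_i}$ for forms in disjoint blocks, and pigeonhole yield the lemma exactly as stated.
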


Let $\mathcal{G}( \u; \v )$ be a polynomial in variables $\u = (u_1, \ldots, u_h)$ and $\v = (v_1, \ldots, v_h)$, and
say it is bihomogeneous of bidegree $(d_1, d_2)$ if
$$
\mathcal{G}( a \u; b \v ) = a^{d_1} b^{d_2} \mathcal{G}( \u; \v )
$$
for all $a, b \in \CC$.
We define the following affine variety
\begin{equation}
\label{singlocbhmg}
V_{\mathcal{G}, 2}^* = \left\{ (\u, \v)  \in  \mathbb{A}_{ \mathbb{C} }^{2 h}:
\frac{\partial \mathcal{G}}{\partial v_i} ( \u; \v)
= 0 \quad (1 \leq i \leq h) \right \}.
\end{equation}
We take the partial derivatives with respect to the second set of variables (in the notation of $\mathcal{G}$) for $V_{\mathcal{G}, 2}^*$.
The following was the key estimate in establishing the main result in \cite{Y1}; we present a
different proof which improves \cite[Theorem 5.1]{Y1} by almost a factor of $2$ under the additional assumption that $F$ satisfies (II) of Definition \ref{dichotomy}.
\begin{prop}
\label{semiprimeprop}
Let $F \in \mathbb{Z}[x_1, \ldots, x_n]$ be a homogeneous form of degree $d \geq 2$ satisfying (II) of Definition \ref{dichotomy}. Let
$\z = (z_1, \ldots, z_h)$ with $h \geq 1$. We let
$\mathbf{x} = (\z, \mathbf{w})$ be a partition of variables and denote
$$
\mathfrak{F} (\z) =  F_{\z} (\z) =  F( \z, \mathbf{0}) = F(\mathbf{x}) |_{\mathbf{w} = \mathbf{0}}.
$$
Let us define a bihomogeneous form
$$
\mathcal{G}( \u; \v ) = \mathfrak{F}(u_1 v_1, \ldots, u_h v_h).
$$
Then
$$
\textnormal{codim} \tsp  V^*_{\mathcal{G}, 2}  \geq  \textnormal{codim}  \tsp   V^*_{\mathfrak{F}} - \mathcal{C}_0.
$$
\end{prop}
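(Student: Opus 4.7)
The plan is to stratify $V^*_{\mathcal{G}, 2}$ by the set $T \subseteq \{1, \ldots, h\}$ of indices on which $u_i = 0$, and to bound each stratum separately. Since $\partial \mathcal{G}/\partial v_i (\u; \v) = u_i \cdot (\partial \mathfrak{F}/\partial z_i)(u_1 v_1, \ldots, u_h v_h)$, every point of $V^*_{\mathcal{G}, 2}$ lies in the stratum
\[
\Sigma_T = \Bigl\{(\u, \v) \in \mathbb{A}^{2h}_{\CC} : u_i = 0 \textnormal{ for } i \in T, \ \frac{\partial \mathfrak{F}}{\partial z_j}(u_1 v_1, \ldots, u_h v_h) = 0 \textnormal{ for } j \notin T\Bigr\}
\]
for some $T$, so it suffices to bound $\textnormal{codim} \, \Sigma_T$ for each $T$. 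Setting $\mathfrak{F}_T(\z_{T^c}) = \mathfrak{F}(\z)|_{\z_T = \mathbf{0}}$, the condition defining $\Sigma_T$ reduces to $(u_j v_j)_{j \notin T} \in V^*_{\mathfrak{F}_T} \subseteq \mathbb{A}^{h - |T|}_{\CC}$. Because every fiber of the coordinate-wise product map $\mathbb{A}^{2(h-|T|)} \to \mathbb{A}^{h-|T|}$ has dimension $h - |T|$, this yields $\dim \Sigma_T \leq h + \dim V^*_{\mathfrak{F}_T}$ and hence $\textnormal{codim} \, \Sigma_T \geq h - \dim V^*_{\mathfrak{F}_T}$. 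The proposition therefore reduces to the claim
\[
\dim V^*_{\mathfrak{F}_T} \leq \dim V^*_{\mathfrak{F}} + \mathcal{C}_0 \quad \textnormal{for every } T \subseteq \{1, \ldots, h\}.
\]

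The boundary cases $T = \emptyset$ and $T = \{1, \ldots, h\}$ are immediate (in the latter $\mathfrak{F}_T \equiv 0$, so $\dim V^*_{\mathfrak{F}_T} = 0$). For $\emptyset \subsetneq T \subsetneq \{1, \ldots, h\}$, I would invoke condition (II) of $F$ with the splitting $\s = \z_T$, $\t = \z_{T^c}$; the associated cross-term polynomial
\[
\mathfrak{G}(\z_T, \z_{T^c}) = \mathfrak{F}(\z_T, \z_{T^c}) - \mathfrak{F}(\z_T, \mathbf{0}) - \mathfrak{F}(\mathbf{0}, \z_{T^c})
\]
then satisfies $\dim V^*_{\mathfrak{G}} \geq h - \mathcal{C}_0$ in $\mathbb{A}^h_{\CC}$. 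Writing $\mathfrak{F}_{\z_T}(\z_T) = \mathfrak{F}(\z_T, \mathbf{0})$, the decomposition $\mathfrak{F} = \mathfrak{F}_{\z_T} + \mathfrak{F}_T + \mathfrak{G}$ and the fact that $\mathfrak{F}_{\z_T}$ depends only on $\z_T$ while $\mathfrak{F}_T$ depends only on $\z_{T^c}$ give, by a direct check on partial derivatives, the key containment
\[
\bigl(V^*_{\mathfrak{F}_{\z_T}} \times V^*_{\mathfrak{F}_T}\bigr) \cap V^*_{\mathfrak{G}} \subseteq V^*_{\mathfrak{F}},
\]
viewed inside $\mathbb{A}^h_{\CC}$. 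All the singular loci here are cones with vertex at the origin (the underlying polynomials are homogeneous of degree $d \geq 2$), so every irreducible component passes through $\mathbf{0}$; the intersection above is therefore nonempty, and the affine Bezout inequality applied to top-dimensional components yields
\[
\dim V^*_{\mathfrak{F}} \geq \dim V^*_{\mathfrak{F}_{\z_T}} + \dim V^*_{\mathfrak{F}_T} + \dim V^*_{\mathfrak{G}} - h \geq \dim V^*_{\mathfrak{F}_T} - \mathcal{C}_0,
\]
using $\dim V^*_{\mathfrak{F}_{\z_T}} \geq 0$. This is exactly the required inequality.

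The main hurdle is establishing a bound on $\dim V^*_{\mathfrak{F}_T}$ whose slack does not grow with $|T|$. A naive approach that cuts $V^*_{\mathfrak{F}_T}$ out of $V^*_{\mathfrak{F}} \cap \{\z_T = \mathbf{0}\}$ produces only $\dim V^*_{\mathfrak{F}_T} \leq \dim V^*_{\mathfrak{F}} + |T|$, which is useless once $|T|$ exceeds $\mathcal{C}_0$. Condition (II) is exactly the tool that rescues this: the cross-term polynomial $\mathfrak{G}$ has a singular locus of codimension at most $\mathcal{C}_0$, and its intersection with the product $V^*_{\mathfrak{F}_{\z_T}} \times V^*_{\mathfrak{F}_T}$ carves out — inside $V^*_{\mathfrak{F}}$ — a piece whose dimension is controlled by Bezout independently of $|T|$, giving the uniform constant $\mathcal{C}_0$ of slack.
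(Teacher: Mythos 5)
Your proof is correct and takes essentially the same route as the paper's: both stratify $V^*_{\mathcal{G},2}$ according to the set of indices on which $u_i$ vanishes, reduce each stratum to a copy of $V^*_{\mathfrak{F}_T}$ via the coordinate-wise product map, and then bound $\dim V^*_{\mathfrak{F}_T}$ against $\dim V^*_{\mathfrak{F}}$ by decomposing $\mathfrak{F} = \mathfrak{F}_{\z_T} + \mathfrak{F}_T + \mathfrak{G}$, using Condition (II) to control $V^*_{\mathfrak{G}}$, and applying the affine dimension theorem (Hartshorne I.7.1) to a triple intersection that contains the origin. The only stylistic difference is that where you invoke the general upper-semicontinuity/fiber-dimension argument for the coordinate-wise product $\mathbb{A}^{2m}\to\mathbb{A}^m$, the paper isolates this as a separate lemma and gives a hands-on proof showing that no component of the preimage is contained in $V(u_i)\cap V(v_i)$ and then slices at a point with all $u_i$ nonzero; the content is the same.
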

\begin{proof}
We begin by proving the following result.
\begin{lem}
\label{lemlem}
Let $G_j \in \CC[v_1, \ldots, v_k]$ be a homogeneous form for each $1 \leq j \leq s$.
We define
$$
L =  \{ (v_1, \ldots, v_k) \in \mathbb{A}_{\CC}^k: G_j(v_1, \ldots, v_k) = 0  \quad (1 \leq j \leq s) \}
$$
and
$$
\widetilde{L} =  \{ (u_1, \ldots, u_k, v_1, \ldots, v_k) \in \mathbb{A}_{\CC}^{2k}: G_j(u_1 v_1, \ldots, u_k v_k) = 0  \quad (1 \leq j \leq s) \}.
$$
Then
$$
\dim \widetilde{L} \leq \dim L + k.
$$
\end{lem}
\begin{proof}
Let $W$ be an irreducible component of $\widetilde{L}$.
Suppose there exists $1 \leq i \leq k$ such that
\begin{equation}
\label{W set}
W \subseteq V(u_i) \cap V(v_i).
\end{equation}
Without loss of generality let us suppose $i = 1$. Then $W$ is of the form
$$
W = \{ (0, \u', 0, \mathbf{v}') \in \mathbb{A}_{\CC}^{2k}:  (\u', \mathbf{v}') \in W'     \},
$$
where $\u' = (u_2, \ldots, u_k)$, $\mathbf{v}' = (v_2, \ldots, v_k)$ and $W' \subseteq \mathbb{A}_{\CC}^{2k-2}$ is an irreducible affine variety.
It is easy to see that
$$
W \subsetneq \{ (0, \u', v_1, \mathbf{v}') \in \mathbb{A}_{\CC}^{2k}:  v_1 \in \mathbb{A}_{\CC},  (\u', \mathbf{v}') \in W'     \} \subseteq \widetilde{L},
$$
and this is a contradiction because
$$
\{ (0, \u', v_1, \mathbf{v}') \in \mathbb{A}_{\CC}^{2k}:  v_1 \in \mathbb{A}_{\CC},  (\u', \mathbf{v}') \in W'     \} \cong \mathbb{A}_{\CC} \times  W'
$$
is irreducible; here $\cong$ denotes an isomorphism of affine varieties. Therefore, we obtain that (\ref{W set}) does not hold for any $1 \leq i \leq k$.
In particular, this means that there exists $(\eta_1, \ldots, \eta_k, \zeta_1, \ldots, \zeta_k) \in W$
such that either $\eta_i \neq 0$ or  $\zeta_i \neq 0$ for each $1 \leq i \leq k$ by the following reason.
Let us suppose otherwise, in which case it follows that
$$
W = \bigcup_{1 \leq i \leq k} W \cap (V(u_i) \cap V(v_i)).
$$
Then
$$
W =  W \cap (V(u_{i_0}) \cap V(v_{i_0}))
$$
for some $1 \leq i_0  \leq k$, because $W$ is irreducible; however, this contradicts the fact that (\ref{W set}) does not hold for any $1 \leq i \leq k$.
Without loss of generality let us suppose $\eta_i \neq 0$ for all $1 \leq i \leq k$. Then we obtain
\begin{eqnarray}
&& W \cap \bigcap_{1 \leq i \leq k} V(u_i - \eta_i)
\notag
\\
\notag
&\subseteq&
\{ (\eta_1, \ldots, \eta_k) \} \times
\{ (v_1, \ldots, v_k) \in \mathbb{A}_{\CC}^k: G_j( \eta_1 v_1, \ldots, \eta_k v_k) = 0  \quad (1 \leq j \leq s) \}
\\
\notag
&\cong&
L,
\end{eqnarray}
and by \cite[Proposition I.7.1]{H} that
$$
\dim W  - k \leq \dim  \left( W \cap \bigcap_{1 \leq i \leq k} V(u_i - \eta_i) \right).
$$
Therefore, we have
$$
\dim W  - k  \leq  \dim L.
$$
Since this holds for any irreducible component $W$ of $\widetilde{L}$, the result follows.
\end{proof}

By the definitions of $\mathcal{G}$ and $V^*_{\mathcal{G}, 2}$, we have
\begin{eqnarray}
\notag
V^*_{\mathcal{G}, 2} &=& \left\{ (\u, \v)  \in  \mathbb{A}_{ \mathbb{C} }^{2 h} :
u_i \frac{\partial \mathfrak{F}}{\partial v_i} (u_1 v_1, \ldots, u_h v_h)
= 0 \quad (1 \leq i \leq h) \right \}
\\
\notag
&=&
\bigcup_{I \subseteq \{1, \ldots, h \}} Z_{I},
\end{eqnarray}
where
$$
Z_I =
\left\{ (\u, \v)  \in  \mathbb{A}_{ \mathbb{C} }^{2 h} :
\frac{\partial \mathfrak{F}}{\partial v_i} (u_1 v_1, \ldots, u_h v_h)
= 0 \quad (i \in I ) \right \}
\medcap  \,
\bigcap_{i \not \in I} V(u_i).
$$
Let us fix a choice of $I$ and without loss of generality we assume $I = \{1, \ldots, k \}$, where
$0 \leq k \leq h$ (we let $I = \emptyset$ if $k = 0$). Let us suppose $k \geq 1$. Then
$$
Z_I \cong
\left\{ (u_1, \ldots, u_k, v_1, \ldots, v_k)  \in  \mathbb{A}_{ \mathbb{C} }^{2k} :
\frac{\partial \mathfrak{F}}{\partial v_i} (u_1 v_1, \ldots, u_k v_k, \mathbf{0} )
= 0 \quad (1 \leq  i  \leq k) \right \}
\times \mathbb{A}^{h - k}_{\CC}.
$$
Let us define
$$
T_k
=
\left\{ (v_1, \ldots, v_k)  \in  \mathbb{A}_{ \mathbb{C} }^k :
\frac{\partial \mathfrak{F}}{\partial v_i} (v_1, \ldots, v_k, \mathbf{0} )
= 0 \quad (1 \leq  i  \leq k) \right \}
$$
and
$$
T'_k
=
\left\{ (v_{k+1}, \ldots, v_h)  \in  \mathbb{A}_{ \mathbb{C} }^{h - k} :
\frac{\partial \mathfrak{F}}{\partial v_i} (\mathbf{0},  v_{k+1}, \ldots, v_h)
= 0 \quad (k + 1 \leq  i  \leq h) \right \}.
$$
Then by Lemma \ref{lemlem} it follows that
\begin{eqnarray}
\notag
\dim Z_I
\leq
\dim T_k + h.
\end{eqnarray}
The result follows on proving
\begin{eqnarray}
\label{ineqqq}
\dim T_k \leq \dim V_{\mathfrak{F}}^* + \mathcal{C}_0.
\end{eqnarray}
With this estimate it follows that
$$
\dim Z_I \leq \dim V_\mathfrak{F}^* + h + \mathcal{C}_0;
$$
note the inequality is trivial when $k = 0$, because $Z_I \cong \mathbb{A}^h_{\CC}$ in this case.
Since this holds for all $I \subseteq \{1, \ldots, h\}$, we then obtain
$$
\dim  V^*_{\mathcal{G}, 2} \leq \dim V_\mathfrak{F}^* + h + \mathcal{C}_0,
$$
and equivalently
$$
2h - \dim V^*_{\mathcal{G}, 2} \geq h - \dim V_\mathfrak{F}^* - \mathcal{C}_0.
$$

We now prove (\ref{ineqqq}) and complete the proof. We assume $1 \leq k < h$ as the inequality is trivial when $k = h$. Let us consider the following partition of variables
$\v = (\s, \t)$, where $\s = (v_1, \ldots, v_k)$ and $\t = (v_{k+1}, \ldots, v_h)$.
Let
$$\mathfrak{F}_{1}(v_1, \ldots, v_k) = \mathfrak{F}(v_1, \ldots, v_k, \mathbf{0}), \quad
\mathfrak{F}_{2}(v_{k+1}, \ldots, v_h) = \mathfrak{F}(\mathbf{0}, v_{k+1}, \ldots, v_h)$$
and
\begin{equation}
\notag
\mathfrak{G} (\v) = \mathfrak{F}(\v) - \mathfrak{F}_{1}(v_1, \ldots, v_k) - \mathfrak{F}_{2}(v_{k+1}, \ldots, v_h).
\end{equation}
Then on recalling the definition of $\mathfrak{F}$, we see that
\begin{eqnarray}
\notag
\mathfrak{G} (\v) = \mathfrak{G}(\s, \t)  = \mathfrak{F}(\s, \t) - \mathfrak{F}(\s, \mathbf{0}) - \mathfrak{F}( \mathbf{0}, \t)
\end{eqnarray}
as defined in (\ref{den G 1}). Therefore, since $F$ satisfies (II) of Definition \ref{dichotomy} it follows that
\begin{equation}
\label{C0}
h -    \dim V_{\mathfrak{G}}^*
= \textnormal{codim} \tsp V_{\mathfrak{G}}^* \leq \mathcal{C}_0.
\end{equation}
We define the following homogeneous forms:
$$
\lambda_i (\v) = \frac{\partial \mathfrak{F}}{ \partial v_i} (\v)   \quad (1 \leq i \leq h), \quad
\tau_i (\v) = \frac{\partial \mathfrak{G} }{ \partial v_i}(\v) \quad (1 \leq i \leq h),
$$
$$
\varphi_i (\v) = \frac{\partial \mathfrak{F}_{1}}{ \partial v_i} (v_1, \ldots, v_k)  \quad (1 \leq i \leq k), \quad
\varphi_i (\mathbf{v}) = \frac{\partial \mathfrak{F}_{2}}{ \partial v_i} (v_{k+1}, \ldots, v_h) \quad (k + 1 \leq i \leq h),
$$
Clearly $\lambda_i = \varphi_i + \tau_i$ $(1 \leq i \leq h)$.
Let us define the following ideals: $I_1 = (\varphi_1, \ldots, \varphi_k)$, $I_2 = (\varphi_{k+1}, \ldots, \varphi_h)$ and $J = (\tau_1, \ldots, \tau_h)$.
Then we have
$$
V(I_1) \cong T_k \times \mathbb{A}_{\CC}^{h-k}, \quad V(I_2) \cong T'_{k} \times \mathbb{A}_{\CC}^{k}, \quad
V(J) = V_{\mathfrak{G}}^*
$$
and
$$
V(I_1) \cap V(I_2) \cap V(J) =   V(I_1 + I_2 + J) \subseteq V(\lambda_1, \ldots, \lambda_h) = V_\mathfrak{F}^* \subseteq \mathbb{A}_{\mathbb{C}}^h.
$$
Therefore, by \cite[Proposition I.7.1]{H} we obtain
\begin{eqnarray}
\notag
\dim V^*_{\mathfrak{F}}
&\geq& \dim ( V(I_1) \cap V(I_2) \cap V(J) )
\\
\notag
&\geq&  \dim V(I_1)  +  \dim V(I_2)  +  \dim V(J) - 2h
\\
\notag
&=& \dim T_k + (h-k)  + \dim T'_k + k   -  h  - ( h -    \dim V_{\mathfrak{G}}^*)
\\
\notag
&\geq&  \dim T_k  + \dim T'_k   -  ( h -    \dim V_{\mathfrak{G}}^*)
\notag
\\
\notag
&\geq&  \dim T_k   -  \mathcal{C}_0,
\end{eqnarray}
where the final inequality follows from (\ref{C0}).
\end{proof}

Next we recall the following identity for the von Mangoldt function
\begin{eqnarray}
\label{von iden}
\Lambda(x) = \sum_{m \ell = x} \mu(m) \log \ell,
\end{eqnarray}
where $\mu$ denotes the M\"{o}bius function, i.e.
\begin{equation}
\label{defmob}
\mu(m) = \begin{cases}
           0 & \mbox{if } p^2|m \mbox{ for some } p \in \wp, \\
           1 & \mbox{if } m = 1, \\
           (-1)^s & \mbox{if } m \mbox{ is a product of $s$ distinct primes}.
         \end{cases}
\end{equation}
In order to apply this identity, we mainly follow the exposition in \cite[Section 3]{Poly} for the remainder of this section.
We fix a constant $\Theta > 1$.
Let $\Psi: \mathbb{R} \to \mathbb{R}$ be a smooth function such that $\supp \Psi = [- \Theta, \Theta]$,
$$
\Psi(y) = 1 \quad   ( y \in [-1, 1])
$$
and obeys the derivative estimates
$$
|\Psi^{(s)}(y)| \ll 1 \quad (y \in \mathbb{R})
$$
for any fixed $s \geq 0$, where the implicit constant depends only on $s$. We then have a smooth partition of unity
$$
1 = \sum_{T \in \mathfrak{D}} \Psi_{T}(x) \quad (x \in \mathbb{N})
$$
indexed by the multiplicative semigroup
$$
\mathfrak{D} = \{ \Theta^t: t \in \ZZ_{\geq 0} \},
$$
where
$$
\Psi_{T}(x) = \Psi \left(  \frac{x}{T}  \right) - \Psi \left( \frac{\Theta x}{T} \right).
$$
It is clear that
$$
\supp \Psi_T  \cap  \mathbb{R}_{>0} \subseteq [\Theta^{-1}T, \Theta T].
$$
Let $\omega$ and $\psi_1, \ldots, \psi_n$ be as in the statement of Theorem \ref{mainthm}, and let
\begin{equation}
\label{supp psi}
\supp \psi_i = [a_i X, b_i X] \quad (1 \leq i \leq n).
\end{equation}
Let $1 \leq i \leq n$.
Then
\begin{eqnarray}
\label{big mess for nu}
\Lambda (x_i) \psi_i (x_i)
&=&
\sum_{  m_i n_i = x_i }
 \mu(m_i) (\log n_i)  \psi_i (x_i)
\\
&=&
\notag
\sum_{  (M_i, N_i) \in \mathfrak{D}^2  }
\sum_{  m_i n_i = x_i }
 \mu(m_i) \Psi_{M_i}(m_i)  (\log n_i)   \Psi_{N_i}(n_i) \psi_i (m_i n_i).
\notag
\end{eqnarray}
For each $(M_i, N_i) \in \mathfrak{D}^2$, the summand vanishes unless
\begin{eqnarray}
\label{cond2}
a_i X \Theta^{-2}  \leq M_i N_i  \leq  b_i X \Theta^{2}.
\end{eqnarray}
We define $\Xi (a_iX, b_iX)$ to be the set of all $(M_i, N_i) \in \mathfrak{D}^{2}$ satisfying (\ref{cond2}).
We can easily deduce that
\begin{eqnarray}
\label{LOG}
\# \Xi (a_iX, b_iX) \ll (\log X)^{2},
\end{eqnarray}
where the implicit constant depends only on $\Theta$. 
We also have by the well-known estimate for the divisor function that
\begin{eqnarray}
\label{bound Phi}
\sum_{ \substack{ \Theta^{-1} M_i  \leq m_i  \leq  \Theta M_i \\   \Theta^{-1} N_i  \leq n_i  \leq  \Theta N_i   }}
\mathbbm{1}_{[a_i X, b_i X]} (m_i n_i)
\ll X^{1 + \varepsilon}
\end{eqnarray}
for $(M_i, N_i) \in \Xi (a_iX, b_iX)$.

For each $1 \leq i \leq n$, let us write (\ref{big mess for nu}) as
\begin{eqnarray}
\Lambda(x_i) \psi_i(x_i)
\label{sum Phi+}
=
\sum_{ (M_i, N_i ) \in \Xi (a_iX, b_iX) }  \sum_{\substack{  \Theta^{-1} U_i  \leq  u_i  \leq \Theta  U_i  \\    \Theta^{-1} V_i  \leq  v_i  \leq \Theta  V_i  }} K_i(u_i) L_i(v_i)
\psi_i ( u_i v_i ),
\end{eqnarray}
where $U_i = \min (M_i, N_i)$ and $V_i = \max (M_i, N_i)$,
$$
u_i = m_i, \quad  K_i (u_i) =  \mu(m_i) \Psi_{M_i}(m_i), \quad  v_i = n_i, \quad L_i (v_i) =   (\log n_i)   \Psi_{N_i}(n_i)
$$
if $M_i \leq N_i$,
and
$$
v_i = m_i, \quad L_i (v_i) =  \mu(m_i) \Psi_{M_i}(m_i), \quad  u_i = n_i, \quad K_i (u_i) =  (\log n_i)   \Psi_{N_i}(n_i),
$$
if $M_i > N_i.$
In particular, it follows from (\ref{cond2}) that
\begin{eqnarray}
\label{construction1}
1 \leq U_i \ll X^{\frac{1}{2}},  \quad
U_i \leq V_i,
\quad
\frac{X}{U_i}
\ll V_i \ll \frac{X}{U_i},
\end{eqnarray}
where the implicit constants depend only on $a_i, b_i$ and $\Theta$.
We also have
$$
K_i(u_i),   L_i(v_i) \ll \log X
$$
for all $\Theta^{-1} U_i  \leq u_i  \leq  \Theta U_i$ and $\Theta^{-1} V_i  \leq v_i  \leq  \Theta V_i$.
For clarity we note that the definitions of $K_i$ and $L_i$ depend on $U_i$ and $V_i$ respectively,
though we do not make this explicit in the notation.

\section{Minor Arcs}
\label{minor}
In this section, we prove the following result.
\begin{prop}
\label{prop minor}
Suppose $F$ is as in the statement of Theorem \ref{mainthm}.
Furthermore, suppose $F$ satisfies (II) of Definition \ref{dichotomy}.
Then there exist $\vartheta_0, \gamma, \lambda > 0$ satisfying (\ref{theta0 major}) such that
$$
\int_{[0,1] \setminus \mathfrak{M}^+(\vartheta_0)} |S(\alpha)| \tsp d \alpha
\ll X^{n - d - \varepsilon}.
$$
\end{prop}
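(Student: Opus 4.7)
The plan is to decompose $S(\alpha)$ via the identity (\ref{von iden}), reduce to bilinear sums of Type I/II shape, and then apply Weyl-type differencing adapted to the bihomogeneous structure $F(u_1v_1,\ldots,u_nv_n)$, so as to exploit the large codimension of $V_F^*$ together with the special form supplied by Proposition \ref{semiprimeprop}.

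First, I would substitute the dyadic decomposition (\ref{sum Phi+}) into (\ref{def S}) to write $S(\alpha) = \sum_{(\mathbf{U},\mathbf{V})} T(\alpha;\mathbf{U},\mathbf{V})$, where the outer sum has at most $(\log X)^{2n}$ terms by (\ref{LOG}), and
$$T(\alpha;\mathbf{U},\mathbf{V}) = \sum_{\u,\v}\prod_{i=1}^n K_i(u_i)L_i(v_i)\psi_i(u_iv_i)\cdot e\!\left(\alpha F(u_1v_1,\ldots,u_nv_n)\right),$$
with the $U_i,V_i$ constrained by (\ref{construction1}). Since we can absorb polylogarithmic losses, it suffices to show that for each fixed dyadic tuple the $L^1$ integral of $|T|$ over $[0,1]\setminus\mathfrak{M}^+(\vartheta_0)$ is bounded by $X^{n-d-\varepsilon}$.

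Second, I would partition $\{1,\ldots,n\}$ into two blocks $\mathbf{y}_0\cup\mathbf{y}_1$ according to a threshold $U_i\lessgtr X^{\sigma}$ for a parameter $\sigma>0$ to be optimized against $\vartheta_0,\gamma,\lambda$. Applying Lemma \ref{lemma rank concn} with $H=2$ produces an index $j_0\in\{0,1\}$ for which the restricted form $F_{j_0}$ satisfies $\textnormal{codim}\,V^*_{F_{j_0}}\geq (\textnormal{codim}\,V^*_F-\mathcal{C}_0)/2$, which, in view of (\ref{codim of F}) and (\ref{defn C0}), remains very large. Fixing the variables associated to the complementary block and absorbing the resulting cross terms of $F$ into a lower-order phase, I would then apply Cauchy--Schwarz (once for the Type I regime, twice or van der Corput differencing for Type II) to eliminate the unknown weights $K_i,L_i$ and reduce the square of $T$ to an expression of the shape $\sum_{\u}|\sum_{\v}e(\alpha\,\mathcal{G}(\u;\v)+\textnormal{l.o.t.})|^{2}$, with $\mathcal{G}(\u;\v)=F_{j_0}(u_1v_1,\ldots,u_{h_0}v_{h_0})$ as in Proposition \ref{semiprimeprop}.

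Third, Proposition \ref{semiprimeprop} guarantees $\textnormal{codim}\,V^*_{\mathcal{G},2}\geq\textnormal{codim}\,V^*_{F_{j_0}}-\mathcal{C}_0$, which is still large thanks to (\ref{codim of F}). Standard Birch--Weyl differencing (exactly as used in \cite{Y1} and adapted in \cite{Y2}, now with the improved codimension bound from Proposition \ref{semiprimeprop}) produces a pointwise minor-arc estimate $|S(\alpha)|\ll X^{n-d-\eta}$ for some $\eta>0$ whenever $\alpha\notin\mathfrak{M}^{+}(\vartheta_0)$, with $\eta$ quantified by the codimension. Integrating over $[0,1]\setminus\mathfrak{M}^+(\vartheta_0)$ (of measure at most $1$) then yields the claim.

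The main obstacle is parameter balancing. The differencing depth, the cut-off $\sigma$, and the major arc widths encoded by $\vartheta_0,\gamma,\lambda$ interact delicately: (\ref{theta0 major}) must hold simultaneously with the condition that the codimension surviving the two-step reduction ($F\rightsquigarrow F_{j_0}$ via Lemma \ref{lemma rank concn}, then $F_{j_0}\rightsquigarrow \mathcal{G}$ via Proposition \ref{semiprimeprop}) be large enough to feed the Weyl inequality and obtain a power saving strictly exceeding $d$. The threshold $\textnormal{codim}\,V^*_F\geq 7d(2d-1)4^d+4(d-1)(12d-1)2^d+12d$ in (\ref{codim of F}) is precisely the margin that lets this cascade close while leaving room for a valid choice of $(\vartheta_0,\gamma,\lambda,\sigma)$.
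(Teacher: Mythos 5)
Your outline captures the overall shape of the argument — decomposition via $\Lambda = \mu * \log$ and the dyadic partition of unity, exploiting the bihomogeneous structure $F(u_1v_1,\ldots,u_nv_n)$, using Lemma \ref{lemma rank concn} and Proposition \ref{semiprimeprop} to control codimensions, Weyl differencing, and an inclusion $\mathfrak{M}^{(\mathrm{II})}\subseteq\mathfrak{M}^+(\vartheta_0)$ — but there is a concrete quantitative error in the reduction step that breaks the argument.

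You propose a \emph{binary} partition of $\{1,\ldots,n\}$ into two blocks by a single threshold $U_i\lessgtr X^\sigma$, i.e.\ taking $H=2$ in Lemma \ref{lemma rank concn}. This cannot close the argument. After the Weyl differencing, the minor arc condition (case ii of Lemma \ref{6.2}) delivers a rational approximation with width $X^{-d}(UV)^{-d}V^{\vartheta(2d-1)}$, and the quantity $d\sigma$ is what controls the gap between $(UV)^{-d}$ and $X^{-d}$. For this to be contained in a valid extended major arc $\mathfrak{M}^+(\vartheta_0)$ with $\vartheta_0,\gamma,\lambda$ satisfying (\ref{theta0 major}), one needs a condition of the shape $2(2d-1)\vartheta_0' + d\sigma < 5/24$ (this is (\ref{theta0 1})). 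But with $H=2$ the indices $i$ in the surviving block have $U_i$ ranging over an interval of exponent width $\approx 1/4$, so after the translation to uniform boxes of side $U=\Theta U_{\min}$, $V=\Theta V_{\min}$ one only has $UV\gg X^{3/4}$ in the worst case — effectively $d\sigma = d/4 \geq 1/2 > 5/24$ for all $d\geq 2$, and (\ref{theta0 1}) is unsatisfiable no matter how $\vartheta_0',\gamma,\lambda$ are tuned. The paper instead splits into $H$ groups with $\sigma = 1/(2H)$ and chooses $H = 6d$, so each group has $U_i\asymp X^{j_0\sigma}$ with $\sigma = 1/(12d)$ and hence $d\sigma = 1/12 < 5/24$; the price is a codimension loss of $(2H-1)\mathcal{C}_0$ in Lemma \ref{lemma rank concn} plus Proposition \ref{semiprimeprop}, and this is exactly what produces the threshold (\ref{codim of F}).

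Two smaller issues. First, your description of the differencing ("Cauchy--Schwarz once for Type I, twice or van der Corput for Type II") does not match what is needed: the paper applies a uniform bihomogeneous Weyl differencing to the $2^{2d-2}$-th power, with H\"{o}lder at each stage, and makes no Type I / Type II distinction at this point — all pieces are treated by the same bilinear Birch--Schmidt scheme. Second, you cannot simply integrate a pointwise bound over a set of measure at most $1$. On the complement of $\mathfrak{M}^{(\mathrm{II})}(\vartheta_0')$ with $\vartheta_0'$ small (which is what can be ensured to contain $[0,1)\setminus\mathfrak{M}^+(\vartheta_0)$), the pointwise bound is only $|S|\ll X^{n+\varepsilon}V^{-\vartheta_0'\mathcal{K}}$, which is not small enough by itself. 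The paper runs the Davenport--Birch sliding-scale argument (the chain $\vartheta_0'<\vartheta_1<\cdots<\vartheta_J<1$, trading measure estimates for the arcs $\mathfrak{M}^{(\mathrm{II})}(\vartheta_{j+1})\setminus\mathfrak{M}^{(\mathrm{II})}(\vartheta_j)$ against the pointwise estimate with parameter $\vartheta_j$) to bridge this gap; this ingredient is absent from your sketch.
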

We begin by substituting (\ref{sum Phi+}) into the definition of $S$ given in (\ref{def S}), and obtain
\begin{eqnarray}
\label{big mess for Sk}
S(\alpha)
= \sum_{ ({\mathbf{M}}, {\mathbf{N}}) \in \Xi (\mathbf{a} X, \mathbf{b} X )} S ({\mathbf{M}},{\mathbf{N}}; \alpha ),
\end{eqnarray}
where
\begin{eqnarray}
S ({\mathbf{M}}, {\mathbf{N}}; \alpha )
=
\sum_{ \substack{ \Theta^{-1} U_i  \leq  u_i  \leq \Theta  U_i  \\    \Theta^{-1} V_i  \leq  v_i  \leq \Theta  V_i \\ (1 \leq i \leq n) }}  \prod_{1 \leq i
\leq n} K_i(u_i) L_i(v_i) \psi_i(u_i v_i)
\cdot  e (  \alpha F( u_1 v_1, \ldots, u_n v_n ) ),
\end{eqnarray}
$$
\Xi (\mathbf{a} X, \mathbf{b} X ) = \Xi (a_1 X, b_1 X ) \times \cdots \times \Xi (a_n X, b_n X )
$$
and
$$
(\mathbf{M}, \mathbf{N}) = ((M_1, N_1), \ldots, (M_n, N_n)).
$$
We prove that given any $( {\mathbf{M}}, {\mathbf{N}}) \in \Xi(\mathbf{a} X, \mathbf{b} X )$,
the sum $S ({\mathbf{M}}, {\mathbf{N}}; \alpha )$ satisfies
the statement of Proposition \ref{prop major} in place of $S(\alpha)$. 
Then Proposition \ref{prop major} follows by noting (see (\ref{LOG}))  that
\begin{eqnarray}
\label{Xi}
\# \Xi(\mathbf{a} X, \mathbf{b} X ) \ll (\log X)^{2n}.
\end{eqnarray}

Let $H \in \NN$ to be chosen later and set
\begin{eqnarray}
\label{defn delta}
\sigma = \frac{1}{2 H}.
\end{eqnarray}
We consider the  partition of $\mathbf{x} = (x_1, \ldots, x_{n})$ into $H$ sets $\mathbf{x}  = (\mathbf{y}_0, \ldots, \mathbf{y}_{H-1})$,
where $\mathbf{y}_j$ is the collection of $x_i$ such that $i$ satisfies
\begin{eqnarray}
\label{XX}
X^{j \sigma }  \ll   U_i  \ll  X^{(j + 1) \sigma },
\end{eqnarray}
where the implicit constants depend only on $a_1, \ldots, a_n, b_1, \ldots, b_n$ and $\Theta$.
Then by Lemma \ref{lemma rank concn} there exists $j_0 \in \{ 0, 1, \ldots, H - 1 \}$ satisfying
\begin{eqnarray}
\label{codimcondF4}
\textnormal{codim} \tsp V^*_{F_{j_0}} \geq   \frac{ \textnormal{codim} \tsp V^*_{F}   - (H -1) \mathcal{C}_0 }{H},
\end{eqnarray}
where
$$
F_{j_0}(\mathbf{y}_{j_0}) = F(\x) |_{\mathbf{y}_{\ell} = \mathbf{0} \tsp (\ell \not = j_0). }
$$
Without loss of generality let $\mathbf{y}_{j_0} = (x_1, \ldots, x_{h})$.
In particular, it then follows from (\ref{construction1}) and (\ref{XX}) that
\begin{equation}\label{UV}
X^{j_0 \sigma }  \ll   U_i  \ll  X^{(j_0 + 1) \sigma }
\quad
\textnormal{ and } \quad
X^{1 - (j_0 + 1) \sigma }  \ll   V_i  \ll  X^{1 - j_0 \sigma } \quad (1 \leq i \leq h).
\end{equation}
By estimating every other variable trivially, we obtain
\begin{eqnarray}
\label{reduced sum 1}
&& S({\mathbf{M}}, {\mathbf{N}}; \alpha )
\\
\notag
&\ll&
X^{n - h + \varepsilon}
\max_{ \substack{ \Theta^{-1} U_i  \leq  u_i  \leq \Theta  U_i  \\    \Theta^{-1} V_i  \leq  v_i  \leq \Theta  V_i \\ (h + 1 \leq i \leq n) }}
\tsp
\sum_{ \substack{ \Theta^{-1} U_i  \leq  u_i  \leq \Theta  U_i  \\    \Theta^{-1} V_i  \leq  v_i  \leq \Theta  V_i \\ (1 \leq i \leq h) }}  \prod_{1 \leq i
\leq h} K_i(u_i) L_i(v_i) \psi_i(u_i v_i)
\cdot e(\alpha G (\u, \v ) ),
\end{eqnarray}
where $\u = (u_1, \ldots, u_{h})$, $\v = (v_1, \ldots, v_{h})$,
\begin{eqnarray}
\notag
G (\u, \v ) = F (u_1 v_1, \ldots, u_{h} v_{h}, \widetilde{\x})
\end{eqnarray}
and
\begin{equation}
\label{defnxtil}
\widetilde{\x} = (u_{h+1} v_{h+1}, \ldots, u_n v_n).
\end{equation}
In particular, the coefficients of the lower degree terms of $G$ may depend on $\widetilde{\x}$. The degree $2d$ homogeneous portion of $G(\mathbf{u}, \mathbf{v})$ is
$G^{[2d]}(\mathbf{u};\mathbf{v})  = F_{j_0}(u_1v_1, \ldots, u_{h}v_h)$.
With this set-up it follows from Proposition \ref{semiprimeprop} and (\ref{codimcondF4}) that
\begin{eqnarray}
\label{codim dj0}
\textnormal{codim} \tsp V^*_{ G^{[2d]}, 2}  \geq
\textnormal{codim} \tsp V^*_{F_{j_0}} - \mathcal{C}_0 \geq   \frac{ \textnormal{codim} \tsp V^*_F - (2H - 1) \mathcal{C}_0 }{H}.
\end{eqnarray}

Next step is a technical improvement that simplifies the argument by the author in \cite{Y2}.
Let
\begin{equation}
\label{VV}
U_{\min} = \min_{1 \leq j \leq h} U_j, \quad
W_{i; \ell_i} = \Theta^{-1} U_i +  \ell_i \Theta U_{\min},
\end{equation}
\begin{equation}
\notag
V_{\min} = \min_{1 \leq j \leq h} V_j,
\quad W'_{i; \ell'_i} = \Theta^{-1} V_i +  \ell'_i \Theta V_{\min}
\end{equation}
for each $0 \leq \ell_i \leq \frac{U_i}{U_{\min}}$, $0 \leq \ell'_i \leq \frac{V_i}{V_{\min}}$ and $1 \leq i \leq h$.
In particular, we have
\begin{eqnarray}
\label{some estimate}
U_i V_{\min} \leq U_i V_i \ll X \quad (1 \leq i \leq h).
\end{eqnarray}
We partition the range of summation in
(\ref{reduced sum 1}) for each $1 \leq i \leq h$ as follows
\begin{eqnarray}
\notag
&& \sum_{\substack{  \Theta^{-1} U_i  \leq  u_i  \leq \Theta  U_i  \\    \Theta^{-1} V_i  \leq  v_i  \leq \Theta  V_i  }} K_i(u_i) L_i (v_i)
\psi_i ( u_i v_i )
\\
\notag
&=&
\sum_{ \substack{ 0 \leq  \ell_i \leq  \frac{U_i}{U_{\min}}  \\  0 \leq \ell'_i \leq \frac{V_i}{V_{\min}}  }  }
\sum_{\substack{ W_{i; \ell_i } \leq u_i < W_{i; \ell_i} + \Theta U_{\min}
\\
W'_{i; \ell'_i } \leq v_i < W'_{i; \ell'_i} + \Theta V_{\min}
 }} K_i(u_i)  L_i (v_i)  \mathbbm{1}_{[\Theta^{-1} U_i, \Theta U_i]}(u_i) \mathbbm{1}_{[\Theta^{-1} V_i, \Theta V_i]}(v_i) \psi_i ( u_i v_i ).
\end{eqnarray}
Let us denote $\boldsymbol{\ell} = (\ell_1, \ldots, \ell_h)$,  $\boldsymbol{\ell}' = (\ell'_1, \ldots, \ell'_h)$,
$U = \Theta U_{\min}$, $V = \Theta V_{\min}$ and
\begin{eqnarray}
\notag
&&\mathcal{E}_{ \widetilde{\x} ; \boldsymbol{\ell}, \boldsymbol{\ell}' }(\alpha)
\\
&=&
\notag
\sum_{ \substack{ W_{i; \ell_i } \leq u_i < W_{i; \ell_i} + U  \\   W'_{i; \ell'_i } \leq v_i < W'_{i; \ell'_i} +  V   \\ (1 \leq i \leq h) }}
\prod_{1 \leq i \leq h} K_i(u_i) L_i(v_i) \mathbbm{1}_{[\Theta^{-1} U_i, \Theta U_i]}(u_i) \mathbbm{1}_{[\Theta^{-1} V_i, \Theta V_i]}(v_i)\psi_i ( u_i v_i )
\cdot e(\alpha  G(\u, \v) ).
\end{eqnarray}
Then (\ref{reduced sum 1}) becomes
\begin{eqnarray}
\label{case 4 ineq 1}
|S({\mathbf{M}}, {\mathbf{N}}; \alpha )|
&\ll&
X^{n - h + \varepsilon}
\max_{ \substack{ \Theta^{-1} U_j  \leq  u_j  \leq \Theta  U_j  \\    \Theta^{-1} V_j  \leq  v_j  \leq \Theta  V_j \\ (h + 1 \leq j \leq n) }}
\tsp
\sum_{ \substack{   {0} \leq  \ell_i \leq \frac{ U_i }{ U_{\min} }  \\   {0} \leq \ell'_i  \leq \frac{ V_i }{ V_{\min} }  \\ (1 \leq i \leq h)  } }
|\mathcal{E}_{ \widetilde{\x}; \boldsymbol{\ell}, \boldsymbol{\ell}' }(\alpha)|
\\
\notag
&\ll&
X^{n - h + \varepsilon} \frac{ U_1 \cdots U_h }{ U_{\min}^h  }
\frac{ V_1 \cdots V_h }{ V_{\min}^h  }
\    \max_{  \substack{ \Theta^{-1} U_j  \leq  u_j  \leq \Theta  U_j  \\    \Theta^{-1} V_j  \leq  v_j  \leq \Theta  V_j \\ (h + 1 \leq j \leq n) \\
 {0} \leq \ell_i \leq \frac{ U_i }{ U_{\min} }  \\   {0} \leq \ell'_i \leq \frac{ V_i }{ V_{\min} } \\ (1 \leq i \leq h)
 } }
|\mathcal{E}_{\widetilde{\x}; \boldsymbol{\ell}, \boldsymbol{\ell}' }(\alpha)|
\\
\notag
&\ll&
\frac{ X^{n + \varepsilon} }{ U^h V^h  }
\    \max_{  \substack{ \Theta^{-1} U_i  \leq  u_i  \leq \Theta  U_i  \\    \Theta^{-1} V_i  \leq  v_i  \leq \Theta  V_i \\ (h + 1 \leq i \leq n) \\
 {0} \leq \ell_i \leq \frac{ U_i }{ U_{\min} }  \\   {0} \leq \ell'_i \leq \frac{ V_i }{ V_{\min} } \\ (1 \leq i \leq h)
 } }
|\mathcal{E}_{\widetilde{\x}; \boldsymbol{\ell}, \boldsymbol{\ell}' }(\alpha)|.
\end{eqnarray}
Let us fix a choice of $\widetilde{\x}$ (defined in (\ref{defnxtil})), $\boldsymbol{\ell}$ and $\boldsymbol{\ell}'$ as in the maximum of (\ref{case 4 ineq 1}). We define $\mathbf{w}, \w' \in \ZZ^h$
by setting
$$
w_i = \lceil W_{i; \ell_i } \rceil  \quad   \textnormal{ and } \quad    w'_i = \lceil  W'_{i; \ell'_i } \rceil
$$
for each $1 \leq i \leq h$. In particular, it follows easily from the definition that
\begin{equation}
\label{some estimate2}
w_i \ll  W_{i; \ell_i } \ll U_i \quad (1 \leq i \leq h).
\end{equation}
Then we have
\begin{eqnarray}
\notag
\mathcal{E}_{ \widetilde{\x} ; \boldsymbol{\ell}, \boldsymbol{\ell}' }(\alpha) =  \mathcal{E} (\alpha)  = \sum_{  \substack{ \u \in [0, U]^{h}  \\    \v  \in [0, V]^{h} }}
\mathbf{K}(\u) \mathbf{L}(\v) \boldsymbol{\psi} (\u; \v) e( \alpha  g(\u, \v)  ),
\end{eqnarray}
where
$$
g (\u, \v) = G(\u + \w, \v + \w'),
$$
$$
\mathbf{K} (\mathbf{u}) = \prod_{1 \leq i \leq h} K_i(u_i + w_i) \mathbbm{1}_{[\Theta^{-1} U_i - w_i, \Theta U_i - w_i]}(u_i) \mathbbm{1}_{[0, W_{i; \ell_i } + U -
w_i)}(u_i),
$$
$$
\mathbf{L} (\mathbf{v})= \prod_{1 \leq i \leq h} L_i(v_i + w'_i) \mathbbm{1}_{[\Theta^{-1} V_i - w'_i, \Theta V_i - w'_i]}(v_i) \mathbbm{1}_{[0, W'_{i; \ell'_i } + V -
w'_i)}(v_i)
$$
and
\begin{eqnarray}
\label{defn of psi2}
\boldsymbol{\psi}(\u; \v) = \prod_{1 \leq i \leq h} \psi_i ( (u_i + w_i) (v_i + w'_i)  ).
\end{eqnarray}
It is clear that $g(\u, \v)$ is a degree $2d$ polynomial in $\mathbf{u}$ and $\mathbf{v}$, and its degree $2d$ homogeneous portion is
$$
g^{[2d]}(\u; \v) =  G^{[2d]}(\u; \v) =  F_{j_0}(u_1 v_1, \ldots, u_{h} v_{h}) =  F(u_1 v_1, \ldots, u_{h} v_{h}, 0, \ldots, 0).
$$
Therefore, by (\ref{codim dj0}) we have
\begin{eqnarray}
\label{codim dj0+}
\textnormal{codim} \tsp V^*_{ g^{[2d]}, 2}  = \textnormal{codim} \tsp V^*_{ G^{[2d]}, 2}
\geq \frac{ \textnormal{codim} \tsp V^*_{F}  - (2H - 1) \mathcal{C}_0 }{H}.
\end{eqnarray}
We now estimate $\mathcal{E}(\alpha)$ by the standard argument in \cite{Bir};
we use a slight variant of the bihomogeneous version of the argument developed by Schindler in \cite{DS}.

\subsection{Weyl Differencing and Geometry of Numbers}
\label{WD1'}
We let $\mathbf{j} = (j_1, \ldots, j_d)$ and $\mathbf{k} = (k_1, \ldots, k_d)$, and denote
\begin{eqnarray}
\label{defng2dsymm}
g^{[2d]}(\mathbf{u};\mathbf{v}) &=& \sum_{j_1 = 1}^{h} \cdots \sum_{j_d = 1}^{h}
\sum_{k_1 = 1}^{h} \cdots \sum_{k_d = 1}^{h}
G_{\mathbf{j}, \mathbf{k}} \tsp  u_{j_1} \cdots u_{j_d} \tsp v_{k_1} \cdots v_{k_d}
\\
\notag
&=& \sum_{1 \leq \mathbf{j} \leq h} \sum_{1 \leq \mathbf{k} \leq h}
G_{\mathbf{j}, \mathbf{k}} \tsp  u_{j_1} \cdots u_{j_d} \tsp v_{k_1} \cdots v_{k_d}
\end{eqnarray}
with each $G_{\mathbf{j}, \mathbf{k}} \in \mathbb{Q}$ symmetric in $(j_1, \ldots, j_d)$ and also in $(k_1, \ldots, k_d)$. Note we have $(d!)^2 G_{\mathbf{j}, \mathbf{k}} \in
\mathbb{Z}$. Also $G_{\mathbf{j}, \mathbf{k}} = 0$ unless $(j_1, \ldots, j_d)$ is a permutation of $(k_1, \ldots, k_d)$; this is because  $g^{[2d]}(\mathbf{u};\mathbf{v}) =
F (u_1v_1, \ldots, u_h v_h, 0, \ldots, 0)$.

Let us define
$$
\mathcal{U} = [0, U]^h \quad \textnormal{ and } \quad  \mathcal{V} = [0, V]^h.
$$
By H\"{o}lder's inequality we obtain
\begin{equation}
\label{ineq 1-1}
|\mathcal{E} (\alpha)|^{ 2^{d - 1} } \ll X^{\varepsilon} V^{ h( 2^{d - 1} - 1)} \sum_{ \substack{ \v  \in \mathcal{V} }}
| T_{\mathbf{v}} ({\alpha}) |^{2^{d - 1}},
\end{equation}
where
\begin{eqnarray}
\label{somesumTTT}
T_{\mathbf{v}} ({\alpha}) = \sum_{ \u  \in \mathcal{U} } \mathbf{K}(\u)  \bpsi(\u; \v) e ( \alpha g( \u, \v ) ).
\end{eqnarray}
Next we use a form of Weyl's inequality as in \cite[Lemma 11.1]{S} to bound $| T_{\mathbf{v}} (\alpha) |^{2^{d} - 1}$.
Given a subset $\mathcal{X} \subseteq \mathbb{R}^{h}$, we
denote $\mathcal{X}^D = \mathcal{X} - \mathcal{X} = \{ \mathbf{z} - \mathbf{z}' : \mathbf{z}, \mathbf{z}' \in \mathcal{X} \}$.
Also for any $\mathbf{z}_1, \ldots, \mathbf{z}_t \in \mathbb{R}^{h}$,  we let
$$
\mathcal{X} (\mathbf{z}_1, \ldots, \mathbf{z}_t) = \medcap_{\epsilon_1 \in \{0, 1\} } \cdots \medcap_{\epsilon_t \in \{0, 1\} }
(\mathcal{X} - \epsilon_1 \mathbf{z}_1 - \cdots - \epsilon_t \mathbf{z}_t).
$$
In particular, it follows that
$$
\mathcal{X} (\mathbf{z}_1, \ldots, \mathbf{z}_t) = \mathcal{X} (\mathbf{z}_1, \ldots, \mathbf{z}_{t-1} ) \medcap  (\mathcal{X} (\mathbf{z}_1, \ldots, \mathbf{z}_{t-1} ) - \mathbf{z}_t).
$$

Let
\begin{eqnarray}
\label{defnF=g}
\widetilde{\mathcal{F}}(\mathbf{u}) =  \alpha g( \mathbf{u}, \mathbf{v} )
\end{eqnarray}
and
$$
\mathcal{F}(\mathbf{u})
=
\alpha g^{[2d]}(\u; \v)
= \alpha
\sum_{1 \leq \mathbf{j} \leq h} \left( \sum_{1 \leq \mathbf{k} \leq h}
G_{\mathbf{j}, \mathbf{k}} \tsp v_{k_1} \cdots v_{k_d} \right) u_{j_1} \cdots u_{j_d}.
$$
Then, viewing these as polynomials in $\u$, we have
\begin{eqnarray}
\label{defnF[d]}
\widetilde{\mathcal{F}}^{[d]}(\mathbf{u}) =  \mathcal{F}(\mathbf{u})  +
\mathcal{G}_{\mathbf{v}} (\mathbf{u}),
\end{eqnarray}
where $\mathcal{G}_{\mathbf{v}} (\mathbf{u})$ is a polynomial of degree at most $d-1$ in $\mathbf{v}$ for a fixed $\mathbf{u}$.
For each $t \in \mathbb{N}$ we denote
\begin{eqnarray}
\label{defnFd}
\widetilde{\mathcal{F}}_{t}(\mathbf{u}_1, \ldots, \mathbf{u}_{t}) = \sum_{\epsilon_1 \in \{0, 1\} } \cdots \sum_{\epsilon_{t} \in \{0, 1\} } \,
(-1)^{\epsilon_1 + \cdots + \epsilon_{t} }
\widetilde{\mathcal{F}}( \epsilon_1 \mathbf{u}_1 + \cdots + \epsilon_{t} \mathbf{u}_{t} ),
\end{eqnarray}
and let $\widetilde{\mathcal{F}}_0$ be the zero polynomial. In particular, it follows that
\begin{eqnarray}
\label{defnFd2}
&&\widetilde{\mathcal{F}}_{t}(\mathbf{u}_1, \ldots, \mathbf{u}_{t})
\\
\notag
&=&
\widetilde{\mathcal{F}}_{t-1}(\mathbf{u}_1, \ldots, \mathbf{u}_{t-1})
-
\sum_{\epsilon_1 \in \{0, 1\} } \cdots \sum_{\epsilon_{t-1} \in \{0, 1\} } \,
(-1)^{\epsilon_1 + \cdots + \epsilon_{t-1} }
\widetilde{\mathcal{F}}( \epsilon_1 \mathbf{u}_1 + \cdots + \epsilon_{t-1} \mathbf{u}_{t-1} + \mathbf{u}_{t} )
\\
\notag
&=&
\widetilde{\mathcal{F}}_{t-1}(\mathbf{u}_1, \ldots, \mathbf{u}_{t-1})
-
\widetilde{\mathcal{F}}_{t-1}(\mathbf{u}_1, \ldots,  \mathbf{u}_{t-2}, \mathbf{u}_{t-1} + \mathbf{u}_t )
+ \widetilde{\mathcal{F}}_{t-1}(\mathbf{u}_1, \ldots, \mathbf{u}_{t-2},  \mathbf{u}_{t}).
\end{eqnarray}
We let
$$
\bpsi_{\mathbf{u}_1} (\z; \v) = \bpsi(\z ; \v) \tsp \bpsi( \u_1 + \z; \mathbf{v})
$$
and recursively define
\begin{eqnarray}
\label{star}
\bpsi_{\mathbf{u}_1, \ldots, \mathbf{u}_{t}} (\mathbf{z}; \v) &=& \bpsi_{\mathbf{u}_1, \ldots, \mathbf{u}_{t - 1}} (\mathbf{z}; \v ) \tsp  \bpsi_{\mathbf{u}_1, \ldots,
\mathbf{u}_{t - 1}} (\mathbf{u}_t + \mathbf{z}; \v)
\\
\notag
&=&
\prod_{\boldsymbol{\epsilon} \in \{ 0, 1 \}^t }  \bpsi(  \epsilon_1 \u_1 + \cdots +  \epsilon_t \u_t +    \z ; \v)
\end{eqnarray}
for $t \geq 2$. We also define $\mathbf{K}_{\u_1, \ldots, \u_t}$ in a similar manner. By following the proof of \cite[Lemma 11.1]{S}, while taking into account the weights,
we obtain
\begin{eqnarray}
\label{beforeCS}
\\
\notag
| T_{\mathbf{v}} ({\alpha}) |^{2^{t - 1} }
&\leq&  | \mathcal{U}^D |^{2^{t-1} - t} \sum_{\mathbf{u}_1 \in \mathcal{U}^D} \cdots
\sum_{\mathbf{u}_{t - 1} \in \mathcal{U}^D}
\\
&&
\notag
\Big{|}  \sum_{\mathbf{u}_t \in \mathcal{U}(\mathbf{u}_1, \ldots, \mathbf{u}_{t - 1})  }
\mathbf{K}_{\mathbf{u}_1, \ldots, \mathbf{u}_{t - 1}} (\u_t) \tsp \bpsi_{\mathbf{u}_1, \ldots, \mathbf{u}_{t - 1}} (\u_t; \v) \tsp e \left( \widetilde{\mathcal{F}}_{t}(\mathbf{u}_1, \ldots,
\mathbf{u}_t) \right) \Big{|}
\end{eqnarray}
for each $t \geq 1$. Then it follows by taking the square of the inequality (with $t = d-1$) and applying
the Cauchy-Schwarz inequality that
\begin{eqnarray}
\label{BiGMess}
\\
\notag
| T_{\mathbf{v}} ({\alpha}) |^{2^{d - 1} }
&\leq&  | \mathcal{U}^D |^{2^{d - 1} - d} \sum_{\mathbf{u}_1 \in \mathcal{U}^D} \cdots
\sum_{\mathbf{u}_{d - 2} \in \mathcal{U}^D}
\\
\notag
&& \Big{|}  \sum_{\mathbf{z} \in \mathcal{U}(\mathbf{u}_1, \ldots, \mathbf{u}_{d - 2})  }
\mathbf{K}_{\mathbf{u}_1, \ldots, \mathbf{u}_{d - 2}} (\mathbf{z})
\tsp \bpsi_{\mathbf{u}_1, \ldots, \mathbf{u}_{d - 2}} (\z; \mathbf{v}) \tsp  e \left( \widetilde{\mathcal{F}}_{d - 1}(\mathbf{u}_1, \ldots, \mathbf{u}_{d - 2}, \mathbf{z}) \right) \Big{|}^2
\\
\notag
&=&
| \mathcal{U}^D |^{2^{d - 1} - d} \sum_{\mathbf{u}_1 \in \mathcal{U}^D} \cdots
\sum_{\mathbf{u}_{d - 2} \in \mathcal{U}^D}
\\
\notag
&& \sum_{\mathbf{z}, \mathbf{z}'   \in \mathcal{U}(\mathbf{u}_1, \ldots, \mathbf{u}_{d - 2})  }
\mathbf{K}_{\mathbf{u}_1, \ldots, \mathbf{u}_{d - 2}} (\mathbf{z})  \tsp  \mathbf{K}_{\mathbf{u}_1, \ldots, \mathbf{u}_{d - 2}} (\mathbf{z}')
\tsp
\bpsi_{\mathbf{u}_1, \ldots, \mathbf{u}_{d - 2}} (\z; \mathbf{v})  \tsp  \bpsi_{\mathbf{u}_1, \ldots, \mathbf{u}_{d - 2}} (\z'; \mathbf{v})
\cdot
\\
\notag
&&
e \left( \widetilde{\mathcal{F}}_{d - 1}(\mathbf{u}_1, \ldots, \mathbf{u}_{d - 2}, \mathbf{z}) - \widetilde{\mathcal{F}}_{d - 1}(\mathbf{u}_1, \ldots, \mathbf{u}_{d - 2}, \mathbf{z}')  \right).
\end{eqnarray}
Given $\mathbf{z}, \mathbf{z}' \in \mathcal{U}(\mathbf{u}_1, \ldots, \mathbf{u}_{d - 2})$, let us set
$\mathbf{u}_{d - 1} = (\mathbf{z}' - \mathbf{z} ) \in \mathcal{U}(\mathbf{u}_1, \ldots, \mathbf{u}_{d - 2})^D$
and
$$
\mathbf{u}_{d} =  \mathbf{z} \in \mathcal{U}(\mathbf{u}_1, \ldots, \mathbf{u}_{d - 2}) \medcap (\mathcal{U}(\mathbf{u}_1, \ldots, \mathbf{u}_{d - 2}) -
\mathbf{u}_{d-1}) =  \mathcal{U}(\mathbf{u}_1, \ldots, \mathbf{u}_{d - 1}).
$$
It also follows from (\ref{defnFd2}) that
\begin{eqnarray}
&&
\widetilde{\mathcal{F}}_{d-1}(\mathbf{u}_1, \ldots, \mathbf{u}_{d-2}, \mathbf{z}) - \widetilde{\mathcal{F}}_{d-1}(\mathbf{u}_1, \ldots, \mathbf{u}_{d-2}, \mathbf{z}')
\notag
\\
&=&
\widetilde{\mathcal{F}}_{d-1}(\mathbf{u}_1, \ldots, \mathbf{u}_{d-2}, \mathbf{u}_{d}) - \widetilde{\mathcal{F}}_{d-1}(\mathbf{u}_1, \ldots, \mathbf{u}_{d-2}, \mathbf{u}_{d - 1} + \mathbf{u}_{d})
\notag
\\
&=& \widetilde{\mathcal{F}}_{d}(\mathbf{u}_1, \ldots, \mathbf{u}_{d-1}, \mathbf{u}_{d}) - \widetilde{\mathcal{F}}_{d-1}(\mathbf{u}_1, \ldots, \mathbf{u}_{d - 1}).
\notag
\end{eqnarray}
Therefore, we obtain
\begin{eqnarray}
\notag
&& \Big{|}  \sum_{\mathbf{z} \in \mathcal{U}(\mathbf{u}_1, \ldots, \mathbf{u}_{d - 2})  }
\mathbf{K}_{\mathbf{u}_1, \ldots, \mathbf{u}_{d - 2}} (\mathbf{z})
\tsp \bpsi_{\mathbf{u}_1, \ldots, \mathbf{u}_{d - 2}} (\z; \mathbf{v}) \tsp  e \left( \widetilde{\mathcal{F}}_{d - 1}(\mathbf{u}_1, \ldots, \mathbf{u}_{d - 2}, \mathbf{z}) \right) \Big{|}^2
\\
\notag
&=&  \sum_{\mathbf{u}_{d - 1} \in \mathcal{U}(\mathbf{u}_1, \ldots, \mathbf{u}_{d - 2})^D  }  \,   \sum_{\mathbf{u}_{d} \in \mathcal{U}(\mathbf{u}_1, \ldots, \mathbf{u}_{d - 1})  } \mathbf{K}_{\mathbf{u}_1, \ldots, \mathbf{u}_{d - 1}} (\mathbf{u}_{d})
\tsp  \bpsi_{\mathbf{u}_1, \ldots, \mathbf{u}_{d - 1}} (\mathbf{u}_d; \mathbf{v})
\cdot
\\
\notag
&&
e \left( \widetilde{\mathcal{F}}_{d}(\mathbf{u}_1, \ldots, \mathbf{u}_{d-1}, \mathbf{u}_{d}) - \widetilde{\mathcal{F}}_{d-1}(\mathbf{u}_1, \ldots, \mathbf{u}_{d - 1}) \right),
\end{eqnarray}
and (\ref{BiGMess}) becomes
\begin{eqnarray}
\label{ineq 1-2}
|T_{\mathbf{v}} ({\alpha}) |^{2^{d-1}}
&\leq&
| \mathcal{U}^D |^{2^{d - 1} - d}
\sum_{\mathbf{u}_1 \in \mathcal{U}^D} \cdots \sum_{\mathbf{u}_{d - 2} \in \mathcal{U}^D}
\ \sum_{\mathbf{u}_{d - 1} \in \mathcal{U}(\mathbf{u}_1, \ldots, \mathbf{u}_{d - 2})^D  }
\\
\notag
&&   \sum_{\mathbf{u}_{d} \in \mathcal{U}(\mathbf{u}_1, \ldots, \mathbf{u}_{d - 1})  }
\mathbf{K}_{\mathbf{u}_1, \ldots, \mathbf{u}_{d - 1}} (\mathbf{u}_d ) \tsp
\bpsi_{\mathbf{u}_1, \ldots, \mathbf{u}_{d - 1}} (\u_d; \v) \cdot
\\
&&e \left( \widetilde{\mathcal{F}}_{d}(\mathbf{u}_1, \ldots, \mathbf{u}_{d}) - \widetilde{\mathcal{F}}_{d-1}(\mathbf{u}_1, \ldots, \mathbf{u}_{d - 1}) \right).
\notag
\end{eqnarray}
We note that $\mathcal{U}^D$, $\mathcal{U}(\mathbf{u}_1, \ldots, \mathbf{u}_{d - 2})^D$ are boxes contained in $[-U, U]^h$,
and $\mathcal{U}(\mathbf{u}_1, \ldots, \mathbf{u}_{d - 1})$ is a box contained in $[0, U]^h$.

Let
$$
\mathcal{F}_{d}(\mathbf{u}_1, \ldots, \mathbf{u}_{d}) = \sum_{\epsilon_1 \in \{0, 1\} } \cdots \sum_{\epsilon_{d} \in \{0, 1\} } \,
(-1)^{\epsilon_1 + \cdots + \epsilon_{d} }
\mathcal{F}( \epsilon_1 \mathbf{u}_1 + \cdots + \epsilon_{d} \mathbf{u}_{d} )
$$
and
$$
(\mathcal{G}_{\v})_d(\mathbf{u}_1, \ldots, \mathbf{u}_{d}) = \sum_{\epsilon_1 \in \{0, 1\} } \cdots \sum_{\epsilon_{d} \in \{0, 1\} } \,
(-1)^{\epsilon_1 + \cdots + \epsilon_{d} }
\mathcal{G}_{\v}( \epsilon_1 \mathbf{u}_1 + \cdots + \epsilon_{d} \mathbf{u}_{d} ).
$$
By \cite[Lemma 11.4]{S}, the polynomial $\mathcal{F}_{d}$ is the unique symmetric multilinear form associated to $\mathcal{F}^{[d]}$, i.e. $\mathcal{F}_{d}$ satisfies
\begin{eqnarray}
\label{FdF[d]}
\mathcal{F}_{d}(\mathbf{u}, \ldots, \mathbf{u}) = (-1)^d \tsp  d! \tsp \mathcal{F}^{[d]}(\mathbf{u}).
\end{eqnarray}
Recalling (\ref{defnF[d]}) and making use of \cite[Lemma 11.2]{S} and \cite[Lemma 11.4]{S}, it follows that
\begin{eqnarray}
\widetilde{\mathcal{F}}_{d}(\mathbf{u}_1, \ldots, \mathbf{u}_d)
&=&
\mathcal{F}_{d}(\mathbf{u}_1, \ldots, \mathbf{u}_{d}) + (\mathcal{G}_{\v})_d(\mathbf{u}_1, \ldots, \mathbf{u}_{d})
\notag
\\
\notag
&=&
(-1)^d d!  \alpha \sum_{ 1 \leq  \mathbf{j} \leq h } \left(  \sum_{ 1 \leq   \mathbf{k}  \leq h } G_{ \mathbf{j}, \mathbf{k} } \tsp v_{k_1} \cdots v_{k_d} \right)  u_{1, j_1} \cdots u_{d, j_{d}}
+
(\mathcal{G}_{\v})_d(\mathbf{u}_1, \ldots, \mathbf{u}_{d}),
\end{eqnarray}
where
$(\mathcal{G}_{\v})_d(\mathbf{u}_1, \ldots, \mathbf{u}_{d})$ is a polynomial of degree
at most $d-1$ in $\mathbf{v}$ for fixed $\mathbf{u}_1, \ldots, \mathbf{u}_{d}$.
Let us denote
$$
- \widetilde{\mathcal{F}}_{d-1}(\mathbf{u}_1, \ldots, \mathbf{u}_{d - 1})
= \alpha \sum_{1 \leq \mathbf{k}  \leq h } v_{k_1} \cdots v_{k_{d}} \tsp
\widetilde{\mathcal{H}}_{\mathbf{k}}(\mathbf{u}_1, \ldots, \mathbf{u}_{d-1})
+
\mathfrak{H}_{\mathbf{v}}(\mathbf{u}_1, \ldots, \mathbf{u}_{d-1}),
$$
where $\mathfrak{H}_{\mathbf{v}} (\mathbf{u}_1, \ldots, \mathbf{u}_{d-1})$ is a polynomial of degree at most $d-1$ in $\mathbf{v}$ for fixed $\mathbf{u}_1, \ldots, \mathbf{u}_{d-1}$, and each
$\widetilde{\mathcal{H}}_{\mathbf{k}}(\mathbf{u}_1, \ldots, \mathbf{u}_{d-1})$ is symmetric in $(k_1, \ldots, k_d)$.
Clearly $\widetilde{\mathcal{H}}_{ \mathbf{k} }$ and $\mathfrak{H}_{\mathbf{v}}$ are polynomials of degrees at most $d$ in $(\mathbf{u}_1, \ldots, \mathbf{u}_{d-1})$.

Let us write $\underline{\mathbf{u}} = (\mathbf{u}_1, \ldots, \mathbf{u}_{d})$.
We substitute the inequality (\ref{ineq 1-2}) into (\ref{ineq 1-1}), and we interchange the order of summation moving
the sum over $\mathbf{v}$ inside the sums over $\mathbf{u}_j$.
Then we apply H\"{o}lder's inequality to obtain
\begin{eqnarray}
\label{Hod1}
\\
\notag
|\mathcal{E}(\alpha)|^{2^{2 d -2}}
\ll
X^{\varepsilon}
U^{h( 2^{2d -2} - d)} V^{ h( 2^{2d -2} - 2^{d -1} ) }
\sum_{\mathbf{u}_1 \in \mathcal{U}^D} \cdots \sum_{\mathbf{u}_{d - 1} \in \mathcal{U}^D}
\sum_{\mathbf{u}_{d} \in \mathcal{U}(\mathbf{u}_1, \ldots, \mathbf{u}_{d - 1})  } | \mathfrak{T}_{ \underline{\mathbf{u}} } ({\alpha}) |^{2^{d - 1} },
\end{eqnarray}
where
\begin{eqnarray}
&&\mathfrak{T}_{\underline{\mathbf{u}} } (\alpha)
\notag
\\
&=& \notag
\sum_{ \mathbf{v} \in \mathcal{V}} \bpsi_{\mathbf{u}_1, \ldots, \mathbf{u}_{d - 1}} (\u_d; \v) \tsp
e \left(  \alpha \sum_{ 1 \leq  \mathbf{k} \leq h}  v_{k_1} \cdots v_{k_{d}} \tsp
\mathcal{H}_{\mathbf{k}}(\underline{\mathbf{u}})  + (\mathcal{G}_{\mathbf{v}})_d (\underline{\mathbf{u}})  +   \mathfrak{H}_{\mathbf{v}}(\mathbf{u}_1, \ldots, \mathbf{u}_{d-1}) \right)
\end{eqnarray}
and
\begin{eqnarray}
\label{defHj}
\mathcal{H}_{\mathbf{k}}(\underline{\mathbf{u}}) =
\sum_{ 1 \leq  \mathbf{j} \leq h }  G_{\mathbf{j}, \mathbf{k} }
(-1)^d d! \tsp u_{1, j_1} \cdots u_{{d}, j_{d}} + \widetilde{\mathcal{H}}_{\mathbf{k}}(\mathbf{u}_1, \ldots,
\mathbf{u}_{d-1}).
\end{eqnarray}
Let us set $\widehat{\mathbf{u}} = (\mathbf{u}_1, \ldots, \mathbf{u}_{d - 1})$. Let $\bpsi_{ \widehat{\mathbf{u}} }(\mathbf{u}_d;  \mathbf{v} )
= \bpsi_{ \mathbf{u}_1, \ldots, \mathbf{u}_{d-1} }(\mathbf{u}_d;  \mathbf{v} )$.
Similarly as before we let
$$
\bpsi_{\widehat{\mathbf{u}}; \v_1 } (\u_d; \mathbf{z}) = \bpsi_{\widehat{\mathbf{u}}}(\u_d;  \mathbf{z})
\tsp \bpsi_{ \widehat{\mathbf{u}}}(\u_d; \mathbf{v}_1 + \mathbf{z})
$$
and recursively define
\begin{eqnarray}
\label{starstar}
\bpsi_{\widehat{\mathbf{u}}; \mathbf{v}_1, \ldots, \mathbf{v}_{t}} (\mathbf{u}_d; \z)
&=& \bpsi_{\widehat{\mathbf{u}}; \mathbf{v}_1, \ldots, \mathbf{v}_{t-1}} (\u_d; \mathbf{z}) \tsp
\bpsi_{\widehat{\mathbf{u}}; \mathbf{v}_1, \ldots, \mathbf{v}_{t-1}} ( \u_d; \mathbf{v}_{t} + \mathbf{z})
\\
\notag
&=&
\prod_{\boldsymbol{\epsilon}' \in \{0,1\}^t } \bpsi_{ \widehat{\mathbf{u}}}(\u_d; \epsilon_1' \mathbf{v}_1 + \cdots + \epsilon_t' \mathbf{v}_t + \mathbf{z})
\end{eqnarray}
for $t \geq 2$. We also set $\widehat{\mathbf{v}} = (\mathbf{v}_1, \ldots, \mathbf{v}_{d - 1})$ and $\underline{\mathbf{v}} = (\mathbf{v}_1, \ldots, \mathbf{v}_{d})$.
Let $\bpsi_{\widehat{\mathbf{u}} ; \widehat{\mathbf{v}} } (\mathbf{u}_d; \mathbf{z}) = \bpsi_{  \widehat{\mathbf{u}} ;  \mathbf{v}_1, \ldots, \mathbf{v}_{d - 1} }
(\mathbf{u}_d; \mathbf{z})$. We now apply the same differencing process as before to $\mathfrak{T}_{\underline{\mathbf{u}}} (\alpha)$.
This time, instead of (\ref{defnF=g}),  we use
\begin{eqnarray}
\label{defnF=g2}
\widetilde{\mathcal{F}}(\mathbf{v}) =  \alpha
 \sum_{ 1 \leq  \mathbf{k} \leq h} \mathcal{H}_{\mathbf{k}}(\underline{\mathbf{u}}) \tsp v_{k_1} \cdots v_{k_{d}}
 + (\mathcal{G}_{\mathbf{v}})_d (\underline{\mathbf{u}})  +   \mathfrak{H}_{\mathbf{v}}(\mathbf{u}_1, \ldots, \mathbf{u}_{d-1}).
\end{eqnarray}
Then
\begin{eqnarray}
\label{defnF[d]2}
\widetilde{\mathcal{F}}^{[d]}(\mathbf{v}) =  \alpha  \sum_{ 1 \leq  \mathbf{k} \leq h} \mathcal{H}_{\mathbf{k}}(\underline{\mathbf{u}}) \tsp v_{k_1} \cdots v_{k_{d}}
\end{eqnarray}
and we let
\begin{eqnarray}
\label{defnF[d]2+}
\mathcal{L} (\underline{\mathbf{u}}; \underline{\mathbf{v}})   =
\widetilde{\mathcal{F}}_{d}(\mathbf{v}_1, \ldots, \mathbf{v}_d)  =     (-1)^d  d! \alpha  \sum_{ 1 \leq  \mathbf{k} \leq h}
\mathcal{H}_{\mathbf{k}}(\underline{\mathbf{u}}) \tsp  v_{1, k_1} \cdots v_{d, k_{d}},
\end{eqnarray}
where the second equality follows by \cite[Lemma 11.4]{S}.
Therefore, by the same argument as in obtaining (\ref{beforeCS}) and substituting $t = d$ yield
\begin{eqnarray}
\notag
| \mathfrak{T}_{\underline{\mathbf{u}}} (\alpha) |^{2^{d - 1} }
\leq  | \mathcal{V}^D |^{2^{d-1} - d} \sum_{\mathbf{v}_1 \in \mathcal{V}^D} \cdots
\sum_{\mathbf{v}_{d - 1} \in \mathcal{V}^D}
\Big{|}  \sum_{\mathbf{v}_d \in \mathcal{V}(\mathbf{v}_1, \ldots, \mathbf{v}_{d - 1})  }
\bpsi_{\widehat{\mathbf{u}} ; \widehat{\mathbf{v}} } (\mathbf{u}_d ; \mathbf{v}_d)  e (  \mathcal{L} (\underline{\mathbf{u}}; \underline{\mathbf{v}})  ) \Big{|},
\end{eqnarray}
and  (\ref{Hod1}) becomes
\begin{eqnarray}
\label{ineq 2'}
|\mathcal{E} (\alpha)|^{2^{2 d -2}} &\ll&
X^{\varepsilon}
U^{ h ( 2^{2 d -2} - d ) }
V^{ h ( 2^{2 d -2} - d) }
\sum_{\mathbf{u}_1 \in \mathcal{U}^D} \cdots \sum_{\mathbf{u}_{d - 1} \in \mathcal{U}^D}
\sum_{\mathbf{u}_{d} \in \mathcal{U}(\mathbf{u}_1, \ldots, \mathbf{u}_{d - 1})  }
\\
\notag
&&  \sum_{\mathbf{v}_1 \in \mathcal{V}^D} \cdots \sum_{\mathbf{v}_{d - 1}  \in \mathcal{V}^D}
\Big{|} \sum_{\mathbf{v}_{d} \in \mathcal{V}(\mathbf{v}_1, \ldots, \mathbf{v}_{d-1})}
\bpsi_{\widehat{\mathbf{u}} ; \widehat{\mathbf{v}} } (\mathbf{u}_d ; \mathbf{v}_d) e (  \mathcal{L} (\underline{\mathbf{u}}; \underline{\mathbf{v}})  )  \Big{|}.
\end{eqnarray}
We note that $\mathcal{V}^D$
is a box contained in $[-V, V]^h$, and $\mathcal{V}(\mathbf{v}_1, \ldots, \mathbf{v}_{d - 1})$ is a box contained in
$[0, V]^h$.
We now change the order of summation in (\ref{ineq 2'}), and bound the exponential sum
\begin{eqnarray}
\label{sum 1}
&&\sum_{\mathbf{u}_{d} \in \mathcal{U}(\mathbf{u}_1, \ldots, \mathbf{u}_{d-1}) } \Big{|} \sum_{\mathbf{v}_{d} \in \mathcal{V}(\mathbf{v}_1, \ldots, \mathbf{v}_{d-1}) }
\bpsi_{ \widehat{\mathbf{u}} ; \widehat{\mathbf{v}} }  (\mathbf{u}_d ; \mathbf{v}_d)
e (  \mathcal{L} (\underline{\mathbf{u}}; \underline{\mathbf{v}})  )   \Big{|}.
\end{eqnarray}
Recall (\ref{defn of psi}), (\ref{defn of psi2}), (\ref{star}) and (\ref{starstar}).
First we have
\begin{eqnarray}
\notag
\bpsi_{ \widehat{\mathbf{u}} ; \widehat{\mathbf{v}} }  (\mathbf{u}_d ; \mathbf{v}_d)
&=&
\prod_{\boldsymbol{\epsilon} \in \{0,1\}^{d-1} } \prod_{\boldsymbol{\epsilon}' \in \{0,1\}^{d-1} }
\bpsi ( \epsilon_1 \mathbf{u}_1 + \cdots + \epsilon_{d-1} \mathbf{u}_{d-1} + \mathbf{u}_d  ; \epsilon_1' \mathbf{v}_1 + \cdots + \epsilon_{d-1}' \mathbf{v}_{d-1} + \mathbf{v}_d )
\\
\notag
&=&
\prod_{1 \leq i \leq h} \Omega_i (v_{d, i}),
\end{eqnarray}
where
\begin{eqnarray}
&& \Omega_i (z)
\notag
\\
\notag
&=&
\prod_{\boldsymbol{\epsilon} \in \{0,1\}^{d-1} } \prod_{\boldsymbol{\epsilon}' \in \{0,1\}^{d-1} }
\omega
\left(  \frac{1}{X}  \left(   \sum_{ 1 \leq s \leq d -1 } \epsilon_s u_{s, i}  + u_{d, i}  + w_i \right)
\left(   \sum_{ 1 \leq t \leq d -1 } \epsilon'_t v_{t, i}  + z  + w'_i \right) - x_{0, i} \right).
\end{eqnarray}
Let
$$
\mathcal{V}(\mathbf{v}_1, \ldots, \mathbf{v}_{d-1}) = \prod_{1 \leq i \leq h}  \mathcal{V}_i(\mathbf{v}_1, \ldots, \mathbf{v}_{d-1}) \subseteq \mathbb{R}^h,
$$
where each $\mathcal{V}_i(\mathbf{v}_1, \ldots, \mathbf{v}_{d-1})$ is an interval contained in $[0, V]$.
With these notation we have
\begin{eqnarray}
\label{sum 1+usedtobe}
&&\sum_{\mathbf{u}_{d} \in \mathcal{U}(\mathbf{u}_1, \ldots, \mathbf{u}_{d - 1})} \Big{|} \sum_{\mathbf{v}_{d} \in \mathcal{V}(\mathbf{v}_1, \ldots, \mathbf{v}_{d-1}) }
\bpsi_{\widehat{\mathbf{u}} ; \widehat{\mathbf{v}} } (\mathbf{u}_d ; \mathbf{v}_d)
 e (  \mathcal{L} (\underline{\mathbf{u}}; \underline{\mathbf{v}})  )   \Big{|}
\\
\notag
&=& \sum_{\mathbf{u}_{d} \in \mathcal{U}(\mathbf{u}_1, \ldots, \mathbf{u}_{d - 1})} \prod_{1 \leq i \leq h}   \Big{|}
\sum_{v_{d, i} \in \mathcal{V}_i(\mathbf{v}_1, \ldots, \mathbf{v}_{d-1}) }
\Omega_i    (v_{d, i} )
e  (  \mathcal{L} (\underline{\mathbf{u}};  \widehat{\mathbf{v}}, v_{d, i} \tsp \mathbf{e}_{i}   )  )  \Big{|},
\end{eqnarray}
where $\mathbf{e}_{i}$ is the $i$-th unit vector in $\RR^h$.  
Given $z \in \mathbb{R}$, let
$$
\| z \| = \min_{y \in \mathbb{Z}} |z - y|.
$$
It is clear from (\ref{defnF[d]2+}) that
$$
\mathcal{L} (\underline{\mathbf{u}};  \widehat{\mathbf{v}}, v_{d, i} \tsp \mathbf{e}_{i}   )  =   \mathcal{L} (\underline{\mathbf{u}};  \widehat{\mathbf{v}},  \mathbf{e}_{i}   ) v_{d, i}.
$$
Therefore, by partial summation we obtain
\begin{eqnarray}
\label{inequality17}
&&\Big{|}
\sum_{v_{d, i} \in \mathcal{V}_i(\mathbf{v}_1, \ldots, \mathbf{v}_{d-1}) }
\Omega_i (v_{d, i}) e  (  \mathcal{L}( \underline{\mathbf{u}};   \widehat{\mathbf{v}}, v_{d, i} \tsp \mathbf{e}_{i}  )  )  \Big{|}
\\
&\ll&
\notag
\left(  \sup_{ v_{d, i} \in [0, V] } \Omega_i (v_{d, i})
+
\int_{ 0 }^{V} \Big{|} \frac{d \Omega_i}{dz}  (v_{d, i}) \Big{|} \tsp d v_{d, i} \right)  \cdot
\min \left( V, \| \mathcal{L} ( \underline{\mathbf{u}}; \widehat{\mathbf{v}}, \mathbf{e}_{i}   )   \|^{-1}  \right).
\end{eqnarray}
Recall (\ref{some estimate}) and (\ref{some estimate2}).
Then since
$$
\left( \sum_{ 1 \leq s \leq d -1 } \epsilon_s u_{s, i}  + u_{d, i}  + w_i \right) \frac{V}{X}
\ll
\frac{(U + U_i) V}{X}
=
\frac{(\Theta U_{\min} + U_i) \Theta V_{\min}}{X}
\ll \frac{U_i V_{\min}}{X} \ll 1
$$
for any $\boldsymbol{\epsilon}$ and $\underline{\mathbf{u}}$  under consideration, we have
\begin{eqnarray}
\int_{0}^{V} \Big{|} \frac{d \Omega_i}{dz}  (v_{d, i}) \Big{|} \tsp d v_{d, i}
\ll
\frac{U + U_i}{X} \int_{0}^{V} 1 \tsp d v_{d, i} \ll 1.
\notag
\end{eqnarray}
It is easy to see that
$$
\sup_{ v_{d, i} \in [0, V] } \Omega_i (v_{d, i}) \ll 1.
$$
Therefore, it follows from (\ref{sum 1+usedtobe}) and (\ref{inequality17}) that
\begin{eqnarray}
\label{sum 1+}
&&\sum_{\mathbf{u}_{d} \in \mathcal{U}(\mathbf{u}_1, \ldots, \mathbf{u}_{d - 1})} \Big{|} \sum_{\mathbf{v}_{d} \in \mathcal{V}(\mathbf{v}_1, \ldots, \mathbf{v}_{d-1}) }
\bpsi_{ \widehat{\mathbf{u}} ; \widehat{\mathbf{v}}}   (\mathbf{u}_{d} ; \mathbf{v}_{d} )  e (  \mathcal{L} (\underline{\mathbf{u}}; \underline{\mathbf{v}})  )   \Big{|}
\\
\notag
&\ll& \sum_{\mathbf{u}_{d} \in \mathcal{U}(\mathbf{u}_1, \ldots, \mathbf{u}_{d - 1})} \prod_{1 \leq i  \leq h} \min \left( V,
\|  \mathcal{L}( \underline{\mathbf{u}}; \widehat{\mathbf{v}}, \mathbf{e}_{i} )   \|^{-1}   \right),
\end{eqnarray}
where the implicit constant is independent of $\underline{\mathbf{u}}$ and $\widehat{\mathbf{v}}$.

For $z \in \RR$ we define its fractional part to be
$$
\{z\} = z - \max_{ \substack{y \leq z \\ y \in \mathbb{Z}}} y.
$$
Given $\mathbf{c} = (c_1, \ldots, c_{h}) \in
\mathbb{Z}^{h}$ with $0 \leq c_{i} < V$ $(1 \leq i \leq h)$,
we let $\mathcal{R}(\widehat{\mathbf{u}}; \widehat{\mathbf{v}}; \mathbf{c})$ be the set of $\mathbf{u}_{d} \in \mathcal{U}(\mathbf{u}_1, \ldots, \mathbf{u}_{d - 1})$
satisfying
$$
\frac{c_{i}}{ V } \leq \{  \mathcal{L}(\underline{\mathbf{u}}; \widehat{\mathbf{v}}, \mathbf{e}_i )   \} <
\frac{c_{i} + 1}{  V  }
\quad (1 \leq i \leq h).
$$
Then we obtain that the right hand side of (\ref{sum 1+}) is bounded by
\begin{eqnarray}
\label{sum 1++}
\ll \sum_{0 \leq  c_1, \ldots, c_h < V} \# \mathcal{R}(\widehat{\mathbf{u}}; \widehat{\mathbf{v}}; \mathbf{c})  \cdot
\prod_{  1 \leq i \leq h} \min \left(  V, \ \max \left(  \frac{ V }{c_{i} }, \ \frac{ V }{ V  - c_{i} - 1}  \right)   \right).
\end{eqnarray}
Next we obtain a bound for $ \# \mathcal{R}(\widehat{\mathbf{u}}; \widehat{\mathbf{v}}; \mathbf{c})$.
We define the multilinear form
$$
\Gamma (\underline{\mathbf{u}} ; \underline{\mathbf{v}} ) = (d!)^2 \sum_{ 1 \leq  \mathbf{j} \leq h } \sum_{ 1 \leq   \mathbf{k}  \leq  h }
G_{ \mathbf{j}, \mathbf{k} } \tsp u_{1, j_1} \cdots u_{d, j_{d}} \tsp v_{1, k_1} \cdots v_{d, k_{d}}.
$$
When $\mathbf{u} = \mathbf{u}_1 = \cdots = \mathbf{u}_{d}$ and $\mathbf{v} = \mathbf{v}_1 = \cdots = \mathbf{v}_{d-1}$,
we have
\begin{eqnarray}
\label{derivcond}
\Gamma( ( \mathbf{u}, \ldots , \mathbf{u}) ; ( \mathbf{v}, \ldots , \mathbf{v},  \mathbf{e}_{i})   ) = \frac{(d!)^2}{d} \cdot  \frac{\partial g^{[2d]}}{ \partial v_{i}}
(\mathbf{u};\mathbf{v})  \quad (1 \leq i \leq h).
\end{eqnarray}
If $\#\mathcal{R}(\widehat{\mathbf{u}}; \widehat{\mathbf{v}}; \mathbf{c}) = 0$, then there is nothing to prove. Thus we suppose
$\#\mathcal{R}(\widehat{\mathbf{u}}; \widehat{\mathbf{v}}; \mathbf{c}) > 0$ and let $\mathbf{z} \in \mathcal{R}(\widehat{\mathbf{u}}; \widehat{\mathbf{v}}; \mathbf{c})$. Then
for any $\mathbf{z}' \in \mathcal{R}(\widehat{\mathbf{u}}; \widehat{\mathbf{v}}; \mathbf{c})$,  we have
$$
\mathbf{z} - \mathbf{z}' \in \mathcal{U}(\mathbf{u}_1, \ldots, \mathbf{u}_{d - 1})^D \subseteq [- U, U]^h
$$
and
\begin{eqnarray}
\label{counting 1''}
\|  \mathcal{L} (\widehat{\mathbf{u}}, \mathbf{z} ; \widehat{\mathbf{v}}, \mathbf{e}_i  ) - \mathcal{L} (\widehat{\mathbf{u}}, \mathbf{z}' ; \widehat{\mathbf{v}},
\mathbf{e}_{i}  )   \| < V^{-1} \quad (1 \leq i \leq h).
\end{eqnarray}
Let $M( \widehat{\mathbf{u}} ; \widehat{\mathbf{v}} )$ be the number of integral vectors $\mathbf{u} \in [-U, U]^h$ such that
$$
\| \alpha \Gamma (  \widehat{\mathbf{u}}, \mathbf{u} ;  \widehat{\mathbf{v}}, \mathbf{e}_{i})  \| < V^{-1} \quad (1 \leq i \leq h).
$$
Given any $\v_d \in \RR^h$, we have
\begin{eqnarray}
&&\mathcal{L}(  \widehat{\mathbf{u}}, \mathbf{z}; \widehat{\mathbf{v}}, \mathbf{v}_d  ) -
\mathcal{L} (\widehat{\mathbf{u}}, \mathbf{z}'; \widehat{\mathbf{v}}, \mathbf{v}_d  )
\notag
\\
\notag
&=&
(-1)^d  d! \alpha  \sum_{ 1 \leq  \mathbf{k} \leq h}
(\mathcal{H}_{\mathbf{k}}(\widehat{\mathbf{u}}, \mathbf{z}) - \mathcal{H}_{\mathbf{k}}(\widehat{\mathbf{u}}, \mathbf{z}')) \tsp   v_{1, k_1} \cdots v_{d, k_{d}}
\\
\notag
&=&
(d!)^2 \alpha  \sum_{ 1 \leq  \mathbf{k} \leq h}
\sum_{ 1 \leq  \mathbf{j} \leq h } G_{\mathbf{j}, \mathbf{k} }  \tsp
( u_{1, j_1} \cdots u_{d-1, j_{d-1}} z_{j_d}   -  u_{1, j_1} \cdots u_{d-1, j_{d-1}} z'_{j_d} )
\tsp  v_{1, k_1} \cdots v_{d, k_{d}}
\\
&=&
\notag
\alpha
\Gamma(  \widehat{\mathbf{u}}, \mathbf{z} - \mathbf{z}'; \widehat{\mathbf{v}}, \mathbf{v}_{d} ),
\end{eqnarray}
where we recall (\ref{defHj}) for the second equality. Thus (\ref{counting 1''}) becomes
$$
\| \alpha
\Gamma(  \widehat{\mathbf{u}}, \mathbf{z} - \mathbf{z}'; \widehat{\mathbf{v}}, \mathbf{e}_{i} )   \| < V^{-1} \quad (1 \leq i \leq h),
$$
and it follows that the vector $\mathbf{z} - \mathbf{z}'$ is counted by $M( \widehat{\mathbf{u}} ; \widehat{\mathbf{v}} )$
for all $\mathbf{z}' \in \mathcal{R}(\widehat{\mathbf{u}}; \widehat{\mathbf{v}}; \mathbf{c})$; therefore, we have
$$
\# \mathcal{R}(\widehat{\mathbf{u}}; \widehat{\mathbf{v}}; \mathbf{c}) \leq M( \widehat{\mathbf{u}} ; \widehat{\mathbf{v}} )
$$
for any $\mathbf{c}$ under consideration. Therefore, by
combining (\ref{sum 1+}) and (\ref{sum 1++}), we obtain
\begin{eqnarray}
\label{ineq 1-3}
&& \sum_{\mathbf{u}_{d} \in \mathcal{U}(\mathbf{u}_1, \ldots, \mathbf{u}_{d - 1})} \Big{|} \sum_{ \mathbf{v}_{d} \in \mathcal{V}(\mathbf{v}_1, \ldots, \mathbf{v}_{d-1}) }
\bpsi_{\widehat{\mathbf{u}} ; \widehat{\mathbf{v}} } (\mathbf{u}_d ; \mathbf{v}_d)
e (  \mathcal{L} (\underline{\mathbf{u}}; \underline{\mathbf{v}})   )   \Big{|}
\\
\notag
&\ll& \sum_{0 \leq  c_1, \ldots, c_h < V} M( \widehat{\mathbf{u}} ; \widehat{\mathbf{v}} )
\prod_{  1 \leq i \leq h} \min \left(  V, \ \max \left(  \frac{ V }{c_{i} }, \ \frac{ V }{ V  - c_{i} - 1}  \right)   \right)
\\
\notag
&\ll& M( \widehat{\mathbf{u}} ; \widehat{\mathbf{v}} ) \prod_{1 \leq i \leq h} V \log V.
\end{eqnarray}
Let us define $\mathcal{M}(\alpha; U'; V'; P)$ to be the number of integral vectors
$$
\underline{\mathbf{u}}\in [-U', U']^{h d}
\quad
\textnormal{ and }
\quad
\widehat{\mathbf{v}} \in [-V', V']^{h(d-1)}
$$
satisfying
$$
\| \alpha \Gamma (\underline{\mathbf{u}}; \widehat{\mathbf{v}}, \mathbf{e}_{i}  )  \| < P \quad (1 \leq i \leq h).
$$
By substituting (\ref{ineq 1-3}) into (\ref{ineq 2'}), it then follows that
\begin{eqnarray}
\label{exp bound with count 1}
| \mathcal{E} ({\alpha})|^{2^{2 d  -2}}
\ll
X^{\varepsilon}
U^{h(2^{2 d  -2} - d)} V^{ h( 2^{2 d  -2} - d + 1)}
\mathcal{M}({\alpha}; U ; V; V^{-1}).
\end{eqnarray}

The following lemma on geometry of numbers was obtained in \cite{SS}; this is a generalization of \cite[Lemma 12.6]{D1}.
\begin{lem} \cite[Lemma 2.4]{SS}
\label{shrink lemma}
Let $\mathfrak{L}_1, \ldots, \mathfrak{L}_h$ be symmetric linear forms given by $\mathfrak{L}_i = c_{i,1}y_1 + \cdots + c_{i,h} y_h$
$(1 \leq i \leq h)$, i.e. $c_{i,j} = c_{j,i}$ $(1 \leq i, j \leq h)$. Let $\gamma_1, \ldots, \gamma_h \in \RR_{>1}$. We denote by $\mathfrak{Y}(Z)$ the number of integer solutions $y_1, \ldots, y_{2h}$ to the system of inequalities
$$
|y_i| < \gamma_i Z \quad (1 \leq i \leq h) \quad  \textnormal{ and }  \quad |\mathfrak{L}_i - y_{h + i}| < \gamma_i^{-1} Z \quad (1 \leq i \leq h).
$$
Then for $0 < Z_1 \leq Z_2 \leq 1$ we have
$$
\frac{\mathfrak{Y}(Z_2)}{\mathfrak{Y}(Z_1)} \ll \left( \frac{Z_2}{Z_1} \right)^{h},
$$
where the implicit constant depends only on $h$.
\end{lem}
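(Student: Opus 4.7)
The plan is to deduce the bound from Minkowski's theorem on successive minima, where the improvement from the naive exponent $2h$ to the exponent $h$ comes from exploiting the symmetry hypothesis $c_{i,j} = c_{j,i}$ through a polar-body duality.

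First, writing $C = (c_{i,j})$ and $(\mathbf{y}, \mathbf{y}') = (y_1, \ldots, y_h; y_{h+1}, \ldots, y_{2h})$, I would reinterpret $\mathfrak{Y}(Z)$ as the number of lattice points $\mathbb{Z}^{2h} \cap K(Z)$ in the convex symmetric body
\[
K(Z) = \{(\mathbf{y}, \mathbf{y}') \in \mathbb{R}^{2h} : |y_i| < \gamma_i Z \text{ and } |(C\mathbf{y} - \mathbf{y}')_i| < \gamma_i^{-1} Z \ \ (1 \leq i \leq h)\}.
\]
The linear map $(\mathbf{y}, \mathbf{y}') \mapsto (\mathbf{y}, C\mathbf{y} - \mathbf{y}')$ has determinant $\pm 1$, so $K(1)$ has volume $2^{2h}$, and clearly $K(Z) = Z \cdot K(1)$. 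Letting $\lambda_1 \leq \cdots \leq \lambda_{2h}$ be the successive minima of $K(1)$ with respect to $\mathbb{Z}^{2h}$, Minkowski's second theorem combined with a standard lattice-point counting argument gives
\[
\mathfrak{Y}(Z) \asymp \prod_{i=1}^{2h} \max(1, Z/\lambda_i).
\]

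The key step, where the symmetry of $C$ is used essentially, is to establish the pairing
\[
\lambda_i \cdot \lambda_{2h+1-i} \gg 1 \qquad (1 \leq i \leq h).
\]
This I would derive by observing that the involution $M = \bigl( \begin{smallmatrix} I & 0 \\ C & -I \end{smallmatrix} \bigr)$ satisfies $M^2 = I$ precisely because $C = C^T$, and that $M$, together with the diagonal scaling $D = \operatorname{diag}(\gamma_1, \ldots, \gamma_h, \gamma_1^{-1}, \ldots, \gamma_h^{-1})$, identifies $K(1)$ with its own polar body up to bounded distortion. Combined with Mahler's transference inequality $\lambda_i(K) \cdot \lambda_{2h+1-i}(K^*) \asymp 1$, this yields the pairing.

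Granted the pairing, for every index $i$ with $\lambda_i \leq Z_1 \leq 1$ its partner satisfies $\lambda_{2h+1-i} \gg 1/\lambda_i \geq 1 \geq Z_1$, so at most $h$ of the $\lambda_i$ can lie below $Z_1$. Then
\[
\frac{\mathfrak{Y}(Z_2)}{\mathfrak{Y}(Z_1)} \asymp \prod_{i=1}^{2h} \frac{\max(1, Z_2/\lambda_i)}{\max(1, Z_1/\lambda_i)} \ll \left(\frac{Z_2}{Z_1}\right)^{h},
\]
because only the at most $h$ indices with $\lambda_i \leq Z_1$ contribute a factor as large as $Z_2/Z_1$, while the remaining indices contribute $O(1)$ each. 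The main obstacle is the pairing inequality: the polar-body identification requires careful bookkeeping of the weights $\gamma_i$ and $\gamma_i^{-1}$ on complementary coordinates, and the symmetry of $C$ is exactly what ensures that the weighted structure on $y_i$ and on $y_{h+i}$ is reciprocal, so that the body is preserved (up to constants) under the duality induced by $M$.
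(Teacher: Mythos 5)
Your approach via Minkowski's second theorem and a polar-body/Mahler transference argument is a genuine alternative to the route in \cite[Lemma 2.4]{SS}, which (following Davenport \cite[Lemma 12.6]{D1}) derives the crucial bound $\lambda_{h+1}\gg 1$ on the successive minima elementarily. There the observation is that the symplectic form $\Phi(\mathbf{y},\mathbf{z})=\sum_{i=1}^{h}(y_i z_{h+i}-y_{h+i}z_i)$ is integer-valued on $\mathbb{Z}^{2h}$; for lattice points $\mathbf{y}\in\lambda_r K(1)$ and $\mathbf{z}\in\lambda_s K(1)$ one has $|\Phi(\mathbf{y},\mathbf{z})|\ll\lambda_r\lambda_s$, because the dominant contribution $\sum_{i,j}c_{ij}(y_i z_j-y_j z_i)$ cancels exactly by $c_{ij}=c_{ji}$. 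Thus when $\lambda_r\lambda_s$ is small the form vanishes, so the minimizing vectors span a $\Phi$-isotropic subspace, which has dimension at most $h$. This is more elementary than your route, avoiding polar bodies and transference entirely, and is the standard device in the Birch--Davenport circle.

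Your route can be completed, but two claims need fixing. First, the assertion that ``$M^2=I$ precisely because $C=C^T$'' is incorrect: $M^2=\left(\begin{smallmatrix}I&0\\ C&-I\end{smallmatrix}\right)^2=I$ for every square $C$, so this identity does not encode the symmetry hypothesis at all. Second, the unimodular transformation that identifies $K(1)$ with its polar up to a bounded factor is not $M$ (which need not preserve $\mathbb{Z}^{2h}$ when $C$ is not integral) but the symplectic rotation $J=\left(\begin{smallmatrix}0&I\\-I&0\end{smallmatrix}\right)$. Writing $A=\left(\begin{smallmatrix}I&0\\ C&-I\end{smallmatrix}\right)$ and $B=\prod[-\gamma_i,\gamma_i]\times\prod[-\gamma_i^{-1},\gamma_i^{-1}]$, one has $K(1)=AB$ and $K(1)^*=A^{T}B^{*}$, and a short calculation shows that $J^{T}K(1)^{*}$ and $K(1)$ agree, up to bounded distortion, precisely when $C=C^{T}$. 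With this correction, transference gives $\lambda_i\lambda_{2h+1-i}\gg 1$; only $\lambda_{h+1}\gg 1$ is actually used at the end, where factors of $\prod_i\max(1,Z_2/\lambda_i)/\max(1,Z_1/\lambda_i)$ at indices $i\le h$ are at most $Z_2/Z_1$ and those at indices $i>h$ are $O(1)$ since $Z_2\le 1$ and $\lambda_i\gg 1$.
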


Let $0 < Q_1, Q_2 \leq 1$ to be set in due course.
By applying Lemma \ref{shrink lemma} (with $\gamma_1 = \cdots = \gamma_h$) $(d-1)$-times with $Q_1$ and then $d$-times with $Q_2$, we obtain
\begin{eqnarray}
\label{ineq 1-4}
\mathcal{M}(\alpha; U; V; V^{-1}) \ll
Q_1^{- h (d-1) } Q_2^{-h  d}
\mathcal{M}(\alpha; Q_2 U; Q_1 V; Q_1^{d- 1} Q_2^{d} V^{-1}).
\end{eqnarray}
With this estimate we obtain the following lemma. For simplicity let us denote
\begin{equation}
\label{V notation}
V^*_2 = V^{*}_{g^{[2d]}, 2}.
\end{equation}
\begin{lem}
\label{6.2}
Let $0 < \vartheta < 1$.
Let $\varepsilon > 0$ be sufficiently small.
Then for $X$ sufficiently large, at least one of the following alternatives holds:

\textnormal{i)} One has the upper bound
\begin{eqnarray}
| \mathcal{E} (\alpha) | \ll
\notag
X^{\varepsilon} U^h V^h V^{- \vartheta  \frac{\textnormal{codim} \tsp V^*_{2}}{2^{2 d -2}} }.
\end{eqnarray}

\textnormal{ii)}
There exist $1 \leq q \leq V^{  \vartheta  (2d-1) }$ and $a \in \mathbb{Z}$ with
$\gcd( {a}, q) = 1$ such that
\begin{eqnarray}
| q \alpha - a | \leq X^{-d + d \sigma} V^{ \vartheta (2d-1) }.
\notag
\end{eqnarray}
\end{lem}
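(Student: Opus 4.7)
The plan is a standard Birch-type dichotomy: assuming alternative (i) fails, I would extract a non-zero small integer vector which produces the rational approximation of (ii). From the failure of (i), raising the lower bound $|\mathcal{E}(\alpha)| \gg X^{\varepsilon} U^h V^h V^{-\vartheta\,\textnormal{codim} \tsp V^*_2/2^{2d-2}}$ to the power $2^{2d-2}$ and combining with (\ref{exp bound with count 1}) yields
\begin{equation*}
\mathcal{M}(\alpha; U; V; V^{-1}) \gg X^{\varepsilon_1} U^{hd} V^{h(d-1)} V^{-\vartheta\,\textnormal{codim} \tsp V^*_2}
\end{equation*}
for some $\varepsilon_1 = \varepsilon(2^{2d-2}-1) > 0$.

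I would then invoke the shrinking inequality (\ref{ineq 1-4}) with $Q_1 = V^{\vartheta-1}$ and $Q_2 = V^\vartheta/U$ in the regime $V^\vartheta \leq U$ (the complementary regime is handled by $Q_2 = 1$ and leads to an analogous conclusion). With these choices $Q_2 U = Q_1 V = V^\vartheta$ and the threshold becomes $Q_1^{d-1} Q_2^d V^{-1} = V^{\vartheta(2d-1) - d} U^{-d}$, yielding
\begin{equation*}
\mathcal{M}\bigl(\alpha; V^\vartheta; V^\vartheta; V^{\vartheta(2d-1)-d}U^{-d}\bigr) \gg X^{\varepsilon_1} V^{\vartheta(h(2d-1) - \textnormal{codim} \tsp V^*_2)}.
\end{equation*}

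Next, I would compare this lower bound with the number of integer points $(\underline{\mathbf{u}},\widehat{\mathbf{v}}) \in [-V^\vartheta, V^\vartheta]^{h(2d-1)}$ for which every $\Gamma(\underline{\mathbf{u}};\widehat{\mathbf{v}},\mathbf{e}_i) = 0$. By multilinear polarization together with the identity (\ref{derivcond}), the complex dimension of this ``degenerate'' affine variety is controlled by $\dim V^*_{g^{[2d]},2}$, as in Schindler's bihomogeneous framework \cite{DS}. A standard integer-point-counting bound on affine varieties then shows that the number of such degenerate integer points is $\ll V^{\vartheta(h(2d-1) - \textnormal{codim} \tsp V^*_2) + \varepsilon_2}$ for any small $\varepsilon_2 > 0$. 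Choosing $\varepsilon$ so that $X^{\varepsilon_1}$ strictly dominates $V^{\varepsilon_2}$, the lower bound from the previous paragraph strictly exceeds this degenerate count, forcing an integer vector $(\underline{\mathbf{u}},\widehat{\mathbf{v}})$ in the box for which $q := \Gamma(\underline{\mathbf{u}};\widehat{\mathbf{v}},\mathbf{e}_{i_0})$ is a non-zero integer for some $i_0$.

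Finally, I would convert this to alternative (ii). Multilinearity of $\Gamma$ gives $|q| \ll V^{\vartheta(2d-1)}$, and the threshold condition gives $\|q\alpha\| \ll V^{\vartheta(2d-1)-d} U^{-d}$. Using $UV \gg X^{1-\sigma}$ from (\ref{UV}) to estimate $U^{-d} \ll X^{-d+d\sigma} V^d$, this becomes $\|q\alpha\| \ll X^{-d+d\sigma} V^{\vartheta(2d-1)}$; the implicit constants may be absorbed into slight adjustments of $\vartheta$ for $X$ sufficiently large. Letting $a$ be the nearest integer to $q\alpha$ and dividing $(q,a)$ by $\gcd(q,a)$ yields (ii). The main obstacle is the dimension-counting step in the third paragraph, where the codimension hypothesis (\ref{codim dj0+}) on $V^*_{g^{[2d]},2}$---established through Proposition \ref{semiprimeprop}---must be brought to bear to ensure the lower bound strictly beats the degenerate count.
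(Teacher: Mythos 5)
Your argument follows the paper's route: the Weyl-differenced count (\ref{exp bound with count 1}), shrinking via (\ref{ineq 1-4}) with $Q_1 = V^{\vartheta-1}$ and $Q_2 = V^\vartheta/U$, Browning's integer-point bound on the degenerate variety $\mathcal{Z} = \{\Gamma(\underline{\mathbf{u}};\widehat{\mathbf{v}},\mathbf{e}_i)=0\ (1\le i\le h)\}$, and the relation $\dim\mathcal{Z} \leq h(2d-1) - \textnormal{codim}\tsp V^*_2$ obtained by intersecting with the diagonal $\mathcal{D}$. The paper phrases the dichotomy directly (either every shrunken point lies in $\mathcal{N}(\mathcal{Z})$, giving (i), or some point does not, giving (ii)); you phrase it contrapositively, which is the same argument. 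The paper also absorbs the $X^\varepsilon$ loss by working with a slightly reduced exponent $\vartheta'$ rather than comparing $X^{\varepsilon_1}$ against $V^{\varepsilon_2}$ — cosmetically different, same content. Your lower bound on the shrunken count, the dimension bookkeeping, and the final conversion of a nonzero $\Gamma$-value into the rational approximation of (ii) all match the paper's.

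The one point that needs care is your parenthetical remark about the regime $V^\vartheta > U$. If you set $Q_2 = 1$ there, the shrinking step no longer supplies the factor $Q_2^{hd}$; the shrunken lower bound becomes $\gg X^{\varepsilon_1}U^{hd}V^{\vartheta h(d-1)-\vartheta\,\textnormal{codim}\tsp V^*_2}$, while the degenerate count in the mixed box $[-U,U]^{hd}\times[-V^\vartheta,V^\vartheta]^{h(d-1)}$ is still only bounded by $V^{\vartheta\dim\mathcal{Z}} \leq V^{\vartheta h(2d-1)-\vartheta\,\textnormal{codim}\tsp V^*_2}$, so the comparison requires $(U/V^\vartheta)^{hd} \gg X^{-\varepsilon_1}$, which fails precisely when $V^\vartheta$ substantially exceeds $U$. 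Thus ``analogous conclusion'' does not follow by this adaptation; the complementary regime needs a genuinely different idea or must be shown not to occur. (The paper's proof also takes $Q_2 = V^{\vartheta'}U^{-1}$ without remarking on the constraint $Q_2 \le 1$, so this is a point worth probing in both treatments, not a defect unique to yours.)
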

\begin{proof}
If
$$
X^{ \frac{\varepsilon}{2} } \geq  V^{ \vartheta  \frac{\textnormal{codim} \tsp V^*_{2}}{2^{2 d -2}} },
$$
then the result is trivial. Therefore, let us suppose otherwise and set
$$
\vartheta' = \vartheta -   \frac{\varepsilon}{2} \cdot   \frac{ \log X  }{ \log V } \cdot  \frac{ 2^{2 d -2}  }{  \textnormal{codim} \tsp V^*_{2} }.
$$
Note from (\ref{UV}) and (\ref{VV}) we have
\begin{equation}
\label{VVV}
X^{\frac12}  \ll  V \ll  X.
\end{equation}
Consider the affine variety $\mathcal{Z}$ defined by
$$
\mathcal{Z} = \left\{   (\underline{\mathbf{u}}, \widehat{\mathbf{v}} ) \in \mathbb{A}^{h(2 d - 1)}_{\mathbb{C}}  :
\Gamma(  \underline{\mathbf{u}};  \widehat{\mathbf{v}}, \mathbf{e}_{i} ) = 0 \quad (1 \leq i \leq h) \right\}.
$$
Let us define
\begin{eqnarray}
\notag
\mathcal{N}(\mathcal{Z})
=
\notag
\left\{
(\underline{\mathbf{u}}, \widehat{\mathbf{v}}) \in \mathbb{Z}^{h(2 d  - 1)} \cap \mathcal{Z} :
\begin{array}{l}
\u_{1}, \ldots, \u_d \in [- Q_2 U,  Q_2  U]^h   \\
\v_{1}, \ldots, \v_{d-1} \in [- Q_1 V,  Q_1  V]^h
\end{array}{}
\right\}.
\end{eqnarray}
In this proof, we set $Q_1 = V^{\vartheta' - 1}$ and $Q_2 = V^{\vartheta'}U^{- 1}$.

Suppose we have that every point counted by
$\mathcal{M}(\alpha; Q_2 U; Q_1 V; Q_1^{d - 1} Q_2^{d} V^{-1})$ is contained in $\mathcal{N}(\mathcal{Z})$.
Then we apply \cite[(3.1)]{Bro};
this bound is independent of the coefficients of the polynomials defining the affine variety (depending only on the dimension and the degree).
As a result, we obtain
\begin{eqnarray}
\label{ineq 4 1}
\mathcal{M}(\alpha; Q_2 U; Q_1 V; Q_1^{d - 1} Q_2^{d} V^{-1})
\leq
\# \mathcal{N}(\mathcal{Z}) \ll V^{\vartheta' \dim \mathcal{Z}}.
\end{eqnarray}
Therefore, it follows from (\ref{exp bound with count 1}), (\ref{ineq 1-4}) and (\ref{ineq 4 1}) that
\begin{eqnarray}
\label{6.23}
| \mathcal{E} (\alpha)|^{2^{2 d -2}}
&\ll& X^{ \varepsilon } U^{h( 2^{2 d -2} - d)} V^{ h( 2^{2 d -2} - d + 1)}
V^{h(d-1) - h(d-1) \vartheta' }
U^{hd} V^{-h d \vartheta' }
V^{\vartheta' \dim \mathcal{Z} }
\\
\notag
&=&
X^{\varepsilon } U^{h 2^{2 d - 2}} V^{h 2^{2 d -2}}
V^{\vartheta'( - h (d - 1) -  h d +  \dim \mathcal{Z} )}.
\end{eqnarray}
Let
\begin{eqnarray}
\mathcal{D} = \{  (\underline{\mathbf{u}}, \widehat{\mathbf{v}}) \in \mathbb{A}_{\mathbb{C}}^{h(2 d  -1 )}:  \mathbf{u}_1 = \cdots = \mathbf{u}_{d},
\mathbf{v}_1 = \cdots = \mathbf{v}_{d-1} \}.
\end{eqnarray}
Then from  (\ref{derivcond}) and (\ref{singlocbhmg}) we have
\begin{equation}
\notag
\dim V^{*}_{2} = \dim ( \mathcal{Z} \cap \mathcal{D} ) \geq \dim \mathcal{Z} - h (d - 1) -  h (d - 2)  =   - h (d - 1)  - hd + \dim \mathcal{Z} + 2 h.
\end{equation}
With this inequality, (\ref{6.23}) becomes
\begin{eqnarray}
| \mathcal{E} (\alpha)|^{2^{2 d -2}}
\notag
\ll
\notag
X^{\varepsilon  }
U^{h 2^{2 d - 2}} V^{h 2^{2 d -2}}
V^{- \vartheta' \textnormal{codim} \tsp V^{*}_{2} },
\end{eqnarray}
and the estimate in i) follows immediately from the definition of $\vartheta'$.

On the other hand, suppose there exists $(\underline{\mathbf{u}}, \widehat{\mathbf{v}})$ counted by
$\mathcal{M}(\alpha; Q_2 U ; Q_1 V; Q_1^{d - 1} Q_2^{d} V^{-1})$
which is not contained in $\mathcal{N}(\mathcal{Z})$, i.e.
there exists $1 \leq i_0 \leq h$ such that
$$
\Gamma(\underline{\mathbf{u}};  \widehat{\mathbf{v}}, \mathbf{e}_{i_0})  \not = 0.
$$
Let us write
$$
\alpha \Gamma(\underline{\mathbf{u}};  \widehat{\mathbf{v}}, \mathbf{e}_{i_0}) = a + \xi,
$$
where $a \in \mathbb{Z}$ and $|\xi | <  Q_1^{d-1} Q_2^{d} V^{-1}$.
Let $q$ be the absolute value of $\Gamma(\underline{\mathbf{u}};  \widehat{\mathbf{v}}, \mathbf{e}_{i_0})$.
Then
\begin{eqnarray}
\notag
1 \leq q \ll Q_1^{d-1} Q_2^{d} U^d V^{d-1}
= V^{\vartheta' (2d -1)}.
\end{eqnarray}
It follows from (\ref{UV}) and (\ref{VV}) that $X^{1 - \sigma} \ll UV$.
Therefore, we obtain
\begin{eqnarray}
| \xi | < Q_1^{d-1} Q_2^{d} V^{-1} = \frac{V^{\vartheta' (2 d  - 1)}}{ U^d V^d } \ll X^{-d + d \sigma}  V^{\vartheta' (2 d  - 1)}.
\notag
\end{eqnarray}
Finally, since $\vartheta' < \vartheta$, it follows that $1 \leq q  \leq V^{ \vartheta (2d-1) }$
and $| \xi | \leq X^{-d + d \sigma}  V^{\vartheta (2 d  - 1)}$
for $X$ sufficiently large; we have obtained the statement in ii).
\end{proof}
By combining (\ref{case 4 ineq 1}) and Lemma \ref{6.2}, we obtain the following.
\begin{prop}
\label{6.2 for S}
Let $0 < \vartheta < 1$.
Let $\varepsilon > 0$ be sufficiently small.
Then for $X$ sufficiently large, at least one of the following alternatives holds:

\textnormal{i)} One has the upper bound
\begin{eqnarray}
| S ( {\mathbf{M}}, {\mathbf{N}}; \alpha) |
\ll
\notag
 X^{n + \varepsilon}  V^{- \vartheta  \frac{\textnormal{codim} \tsp V^*_{2}}{2^{2 d -2}} }.
\end{eqnarray}

\textnormal{ii)}
There exist $1 \leq q \leq V^{  \vartheta  (2d-1) }$ and $a \in \mathbb{Z}$ with
$\gcd( {a}, q) = 1$ such that
\begin{eqnarray}
| q \alpha - a | \leq X^{-d + d \sigma} V^{ \vartheta (2d-1) }.
\notag
\end{eqnarray}
\end{prop}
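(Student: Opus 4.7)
The plan is to read off Proposition 6.2 for $S$ directly by combining the two ingredients already assembled: the pointwise reduction (\ref{case 4 ineq 1}), which bounds $|S(\mathbf{M},\mathbf{N};\alpha)|$ in terms of a maximum of exponential sums of the form $\mathcal{E}_{\widetilde{\x};\boldsymbol{\ell},\boldsymbol{\ell}'}(\alpha)$, and Lemma \ref{6.2}, which is the dichotomy for a single such $\mathcal{E}(\alpha)$.

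First I would fix any admissible choice of $\widetilde{\x}$, $\boldsymbol{\ell}$, $\boldsymbol{\ell}'$ achieving the maximum in (\ref{case 4 ineq 1}) and write $\mathcal{E}(\alpha) = \mathcal{E}_{\widetilde{\x};\boldsymbol{\ell},\boldsymbol{\ell}'}(\alpha)$. Then I would apply Lemma \ref{6.2} to this $\mathcal{E}(\alpha)$. The dichotomy splits into two cases. In the first case, one has the uniform bound
\[
|\mathcal{E}(\alpha)| \ll X^{\varepsilon} U^h V^h V^{- \vartheta \, \textnormal{codim}\tsp V^{*}_{2}/2^{2d-2}}.
\]
Substituting this into (\ref{case 4 ineq 1}) gives
\[
|S(\mathbf{M},\mathbf{N};\alpha)| \ll \frac{X^{n+\varepsilon}}{U^h V^h}\cdot X^{\varepsilon} U^h V^h V^{- \vartheta \, \textnormal{codim}\tsp V^{*}_{2}/2^{2d-2}} \ll X^{n+\varepsilon} V^{-\vartheta \, \textnormal{codim}\tsp V^{*}_{2}/2^{2d-2}},
\]
after absorbing the $X^{\varepsilon}$ factor (the role of the ``variable'' $\varepsilon$ explained in the Notation paragraph). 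This is exactly alternative i). In the second case, the rational approximation conclusion of Lemma \ref{6.2} involves only $\alpha$, $U$, $V$, and constants, and is therefore transferred verbatim to $S(\mathbf{M},\mathbf{N};\alpha)$; this is alternative ii).

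There is essentially no obstacle here: the only thing to verify is bookkeeping of the $X^{\varepsilon}$ factors and the fact that the maximum in (\ref{case 4 ineq 1}) depends on the auxiliary parameters $\widetilde{\x},\boldsymbol{\ell},\boldsymbol{\ell}'$ while the hypothesis and conclusion of Lemma \ref{6.2} are uniform in these parameters (since $U$, $V$, and $\textnormal{codim}\tsp V^{*}_{2}$ do not depend on them). The approximation condition in ii) depends only on $\alpha$, so passing to the supremum presents no difficulty. Thus the proposition is an immediate corollary of Lemma \ref{6.2} together with (\ref{case 4 ineq 1}), and I would present it as such in two or three lines.
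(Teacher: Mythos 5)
Your argument is exactly the paper's proof, which consists of the single sentence ``By combining (\ref{case 4 ineq 1}) and Lemma \ref{6.2}, we obtain the following''; you have simply unpacked it. You correctly note the key uniformity point that $U$, $V$, and $\textnormal{codim}\tsp V^*_2$ are independent of the parameters $\widetilde{\x},\boldsymbol{\ell},\boldsymbol{\ell}'$ over which the maximum in (\ref{case 4 ineq 1}) ranges (since $g^{[2d]}$ is unaffected by the shifts $\w,\w'$ and the fixed block $\widetilde{\x}$), so taking the maximum is harmless and the two alternatives pass straight through.
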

We let $\nu = \frac{\log V}{\log X}$. Then from (\ref{VVV}) it follows that
\begin{eqnarray}
\label{VVVV}
\frac{1}{2 + \varepsilon} < \nu < 1 + \varepsilon.
\end{eqnarray}
We define $\mathfrak{M}^{(\textnormal{II})}(\vartheta)$ to be the set of $\alpha \in [0,1)$
satisfying ii) in Proposition \ref{6.2 for S}, i.e. there exist $1 \leq q \leq X^{  (2d-1) \nu \vartheta }$ and $a \in \mathbb{Z}$ with
$\gcd( {a}, q) = 1$ such that
\begin{eqnarray}
| q \alpha - a | \leq X^{(2d-1) \nu \vartheta +  d \sigma - d}.
\notag
\end{eqnarray}

\subsection{Sliding scale argument}
We begin by setting
$$
\mathcal{K} = \frac{\textnormal{codim} \tsp V^{*}_{2}}{2^{2d - 2} }.
$$
For any $0 < \vartheta < 1$,
the Lebesgue measure of $\mathfrak{M}^{(\textnormal{II})}( \vartheta)$ is bounded by the following quantity
\begin{eqnarray}
\textnormal{meas}(\mathfrak{M}^{(\textnormal{II})}( \vartheta) )
&\ll& \sum_{1 \leq  q \leq X^{  (2d-1)    \nu \vartheta  } }
\sum_{ \substack{  0 \leq a \leq q  \\  \gcd(a, q) = 1 } } q^{-1} X^{-d + d \sigma  +    (2d-1)   \nu \vartheta  }
\label{Lm1}
\\
\notag
&\ll& X^{- d  + d \sigma  +   2 (2d - 1)    \nu \vartheta  }.
\end{eqnarray}
We then define a sequence
$$
0 < \vartheta < \vartheta_1 < \cdots < \vartheta_J < 1
$$
satisfying
$$
\frac{\tau}{2} > 2 (2d-1) \nu (\vartheta_{1} - \vartheta),
\quad \frac{\tau}{2} > 2 (2d-1) \nu (\vartheta_{j+1} - \vartheta_{j})
\quad (1 \leq j \leq J-1)
$$
and
$$
1 - \tau < \vartheta_J < 1,
$$
where $\tau > 0$ is sufficiently small. In particular, $J \ll 1$.
Then, under the assumption that $\mathcal{K} > 2(2d-1)$ holds, by Proposition \ref{6.2 for S} and (\ref{Lm1}) we have
\begin{eqnarray}
\label{minor arc 1 bdd1}
&&\int_{[0,1) \setminus  \mathfrak{M}^{(\textnormal{II})}( \vartheta) }  |S({\mathbf{M}}, {\mathbf{N}}; \alpha )| \tsp {d} {\alpha}
\\
\notag
&=&
\int_{[0,1) \setminus  \mathfrak{M}^{(\textnormal{II})} ( \vartheta_J) }  |S({\mathbf{M}}, {\mathbf{N}}; \alpha )| \tsp {d} {\alpha}
+
\sum_{j=1}^{J-1}  \int_{\mathfrak{M}^{(\textnormal{II})} ( \vartheta_{j+1}) \setminus  \mathfrak{M}^{(\textnormal{II})}( \vartheta_{j}) }  |S({\mathbf{M}}, {\mathbf{N}}; \alpha )| \tsp {d} {\alpha}
\\
\notag
&+&
\int_{\mathfrak{M}^{(\textnormal{II})} ( \vartheta_1) \setminus  \mathfrak{M}^{(\textnormal{II})}( \vartheta) }
|S({\mathbf{M}}, {\mathbf{N}}; \alpha )| \tsp {d} {\alpha}
\\
\notag
&\ll&
X^{n -  \mathcal{K} \nu + \varepsilon }
+
X^{n   - d + d \sigma  -  ( \mathcal{K} -  2 (2d-1)  ) \nu \vartheta    + \varepsilon }
\\
\notag
&\ll&
X^{n   - d  - \varepsilon }
+
X^{n   - d + d \sigma  -  ( \mathcal{K} -  2 (2d-1)  ) \nu \vartheta    + \varepsilon }.
\end{eqnarray}

Now we define $0 < \vartheta_0' < 1$ by
$$
\vartheta_0' = \frac{ (2 +  \varepsilon) d \sigma}     { \mathcal{K} -  2 (2d-1)  }.
$$
On recalling (\ref{VVVV}) we see that
\begin{equation}
\label{theta0}
\frac{d \sigma}     { \mathcal{K} -  2 (2d-1)  } <  \nu \vartheta_0'
\end{equation}
is satisfied.
We further need $\vartheta_0'$ to satisfy
\begin{equation}
\label{theta0 1}
2 (2d-1) \vartheta_0'+ d \sigma < \frac{5}{24},
\end{equation}
or equivalently
\begin{eqnarray}
\label{cond KK}
\mathcal{K} > \frac{ 4 d \sigma (2d-1)  }{\frac{5}{24} - d \sigma } + 2 (2d-1) > 2 (2d-1).
\end{eqnarray}
Recall the definition of $\mathcal{C}_0$ given in (\ref{defn C0}).
Let us set $H = \mathfrak{t}d$ with $\mathfrak{t} \in \NN$ so that $\sigma = 1/(2 \mathfrak{t} d)$ (recall (\ref{defn delta})).
Then from (\ref{codim dj0+}), which states
\begin{eqnarray}
\notag
\textnormal{codim} \tsp V^{*}_{2} \geq
\frac{\textnormal{codim} \tsp V^*_{F } - (2H - 1) \mathcal{C}_0 }{H},
\end{eqnarray}
we see that (\ref{cond KK}) is satisfied provided
\begin{eqnarray}
\label{codim cond+}
\textnormal{codim} \tsp V^*_{F}  &>&  H 4^{d-1} \left( \frac{ 4 d \sigma (2d-1)  }{\frac{5}{24} - d \sigma } + 2 (2d-1) \right) + (2H - 1) \mathcal{C}_0
\\
&=&
\notag
d (2d-1) 4^{d}
\left(  \frac{ H \sigma   }{\frac{5}{24} - d \sigma } +  \frac{H}{2 d}  \right)  + (2 \mathfrak{t} d - 1 ) \mathcal{C}_0
\\
&=&
\notag
d (2d-1) 4^{d}
\left(  \frac{ \frac12 }{\frac{5}{24} - \frac{1}{ 2 \mathfrak{t}} } +  \frac{\mathfrak{t}}{2}  \right)  + (2 \mathfrak{t} d - 1 ) \mathcal{C}_0
\\
&=&
\notag
d (2d-1)  4^{d}  \frac{\mathfrak{t} (5 \mathfrak{t} + 12)  }{ 2(5 \mathfrak{t} - 12) }  + (2 \mathfrak{t} d - 1 ) \mathcal{C}_0.
\end{eqnarray}
Let us choose $\mathfrak{t} = 6$, in which case (\ref{codim cond+}) becomes
\begin{eqnarray}
\textnormal{codim} \tsp V^*_{F}
>
7 d (2d-1) 4^{d} + (12 d - 1) \mathcal{C}_0,
\notag
\end{eqnarray}
which is precisely (\ref{codim of F}); therefore, it follows that (\ref{theta0 1}) and (\ref{cond KK}) hold.

Let us set  $\lambda = d \sigma = 1/12$ in the definition of $\mathfrak{M}^+ ( \vartheta_0 )$ given in (\ref{defn major}).
Let $\vartheta_0 = (2d-1)\vartheta_0'(1 + \varepsilon)$.
Then from (\ref{theta0 1}), which we now know to hold assuming (\ref{codim of F}), it follows that
$$
2 \vartheta_0 + (2 \vartheta_0 + 2 \lambda + \varepsilon) < \frac{5}{12},
$$
i.e. (\ref{theta0 major}) is satisfied with $\vartheta_0$ and $\lambda$ as in this section and $\gamma = 2 \vartheta_0 + 2 \lambda + \varepsilon$.
It is easy to see from the definition of $\mathfrak{M}^{(\textnormal{II})} (\vartheta)$ and (\ref{VVVV}) that
$$
\mathfrak{M}^{(\textnormal{II})} (\vartheta_0') \subseteq  \mathfrak{M}^+ (\vartheta_0),
$$
hence
$$
\mathfrak{m}^+ ( \vartheta_0 )
=  [0,1) \setminus  \mathfrak{M}^+( \vartheta_0 ) \subseteq  [0,1) \setminus  \mathfrak{M}^{(\textnormal{II})}(\vartheta_0').
$$
Therefore, by (\ref{minor arc 1 bdd1}), (\ref{theta0}) and (\ref{cond KK}) we obtain
\begin{eqnarray}
\label{57}
\int_{ \mathfrak{m}^+ ( \vartheta_0 ) } |S( {\mathbf{M}},  {\mathbf{N}};   \alpha)| \tsp d \alpha
\leq
\int_{[0,1) \setminus  \mathfrak{M}^{(\textnormal{II})}( \vartheta_0') }  |S({\mathbf{M}}, {\mathbf{N}}; \alpha )| \tsp {d} {\alpha}
\ll X^{n - d - \varepsilon}.
\end{eqnarray}
As mentioned in the first paragraph of this section, it then follows from (\ref{big mess for Sk}), (\ref{Xi}) and (\ref{57}) that
\begin{eqnarray}
\notag
\int_{ \mathfrak{m}^+ ( \vartheta_0 ) } |S (\alpha)| \tsp d \alpha
\leq
\sum_{ ({\mathbf{M}}, {\mathbf{N}}) \in \Xi (\mathbf{a} X, \mathbf{b} X )}
\int_{ \mathfrak{m}^+ ( \vartheta_0 ) } |S ( {\mathbf{M}},  {\mathbf{N}};   \alpha)| \tsp d \alpha
\ll X^{n - d - \varepsilon};
\end{eqnarray}
this completes the proof of Proposition \ref{prop minor}.

\section{Conclusion}
\label{conc}
By combining Propositions \ref{prop major} and \ref{prop minor}, we obtain Theorem \ref{mainthm} when $F$ satisfies (II) of Definition \ref{dichotomy}.
We now deal with the remaining case. Suppose $F$ satisfies (I) of Definition \ref{dichotomy}. Let us denote $\mathbf{w} = (w_1, \ldots, w_{\ell})$, $\mathbf{s} = (s_1, \ldots, s_{m})$
and $\mathbf{t} = (t_1, \ldots, t_{n - m - \ell})$. We also let $\mathbf{s}' = (s'_1, \ldots, s'_{m})$ and $\mathbf{t}' = (t'_1, \ldots, t'_{n - m - \ell})$. Let
\begin{eqnarray}
\mathfrak{g}_{\mathbf{w}}(\mathbf{s}, \mathbf{s}', \mathbf{t}, \mathbf{t}' ) = F(\mathbf{s}, \mathbf{t}, \mathbf{w}) - F(\mathbf{s}, {\mathbf{t}'}, \mathbf{w}) - F(
{\mathbf{s}'}, \mathbf{t} , \mathbf{w}) + F({\mathbf{s}'}, {\mathbf{t}'} , \mathbf{w}).
\notag
\end{eqnarray}
Then
$$
\mathfrak{g}^{[d]}_{\mathbf{w}}(\mathbf{s}, {\mathbf{s}'}, \mathbf{t}, {\mathbf{t}'} ) =
\mathfrak{G}(\mathbf{s}, \mathbf{t}) - \mathfrak{G}(\mathbf{s}, {\mathbf{t}'}) - \mathfrak{G}( {\mathbf{s}'}, \mathbf{t}) + \mathfrak{G}({\mathbf{s}'}, {\mathbf{t}'})
$$
for each fixed $\mathbf{w}$, where
$\mathfrak{G}$ is defined in (\ref{den G 1}).
By applying the Cauchy-Schwarz inequality twice, we obtain
\begin{eqnarray}
\label{CS twice ineq}
|S(\alpha)|^4 &\ll& X^{2n + \ell + \varepsilon} \sum_{\mathbf{w} \in [0,X]^{\ell}} \tsp \sum_{\mathbf{s}, {\mathbf{s}'} \in [0, X]^{m}} \tsp \sum_{\mathbf{t}, {\mathbf{t}'} \in [0, X]^{n -
m - \ell}} e(\alpha \mathfrak{g}_{\mathbf{w}}(\mathbf{s}, {\mathbf{s}'}, \mathbf{t}, {\mathbf{t}'} ))
\\
\notag
&\ll& X^{2n + 2 \ell + \varepsilon} \max_{\mathbf{w} \in [0, X]^{\ell}} \Big{|} \sum_{\mathbf{s}, {\mathbf{s}'} \in [0, X]^{m}} \tsp \sum_{\mathbf{t}, {\mathbf{t}'} \in [0, X]^{n - m
- \ell}} e(\alpha \mathfrak{g}_{\mathbf{w}}(\mathbf{s}, {\mathbf{s}'}, \mathbf{t}, {\mathbf{t}'} ))\Big{|}.
\end{eqnarray}
Since $\mathfrak{g}^{[d]}_{\mathbf{w}}(\mathbf{s}, \mathbf{0}, \mathbf{t}, \mathbf{0} ) = \mathfrak{G}(\mathbf{s}, \mathbf{t})$, it follows from \cite[Lemma 3.1]{Y2} that
\begin{equation}
\label{ineq codim 1'}
\textnormal{codim} \tsp V_{ \mathfrak{g}^{[d]}_{\mathbf{w}} }^* \geq \textnormal{codim} \tsp V_{\mathfrak{G}}^* > \mathcal{C}_0
\end{equation}
for each fixed $\mathbf{w}$. Then we obtain the following as an immediate consequence of \cite[Lemma 4.3]{Bir}
(additional explanation can be found  in the proof of \cite[Lemma 4.2]{Y2}).
\begin{lem}
\label{last lem}
Let $F \in \mathbb{Z}[x_1, \ldots, x_n]$ be a homogeneous form of degree $d \geq 2$ satisfying \textnormal{(I)} of Definition \ref{dichotomy}.
Let $0 < \varsigma < 1$.
Let $\varepsilon > 0$ be sufficiently small.
Then for $X$ sufficiently large, at least one of the following alternatives holds:

\textnormal{i)} One has the upper bound
$$
|S(\alpha)| \ll X^{n - \varsigma 2^{- 1 - d} \mathcal{C}_0 + \varepsilon}.
$$

\textnormal{ii)} There exist $1 \leq q \leq X^{(d-1) \varsigma }$ and $a \in \mathbb{Z}$ with $\gcd (a, q) = 1$
such that
$$
| q \alpha - a | \leq X^{-d + (d-1)\varsigma}.
$$
\end{lem}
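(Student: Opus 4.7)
The plan is to combine the Cauchy–Schwarz square-out (\ref{CS twice ineq}) already established in the excerpt with a direct application of Birch's Weyl-type inequality \cite[Lemma 4.3]{Bir} to the inner sum. The crucial observation is that, for each fixed $\mathbf{w}$, the inner exponential sum
$$
\sum_{\mathbf{s},\mathbf{s}'\in[0,X]^m}\sum_{\mathbf{t},\mathbf{t}'\in[0,X]^{n-m-\ell}} e\bigl(\alpha\,\mathfrak{g}_{\mathbf{w}}(\mathbf{s},\mathbf{s}',\mathbf{t},\mathbf{t}')\bigr)
$$
is an unweighted Birch-type exponential sum in $2n-2\ell$ variables attached to the degree-$d$ polynomial $\mathfrak{g}_{\mathbf{w}}$, whose top-degree homogeneous portion $\mathfrak{g}^{[d]}_{\mathbf{w}}$ satisfies $\textnormal{codim}\,V^{*}_{\mathfrak{g}^{[d]}_{\mathbf{w}}}>\mathcal{C}_0$ by (\ref{ineq codim 1'}). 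Note in particular that the smooth weights $\varpi(\mathbf{x})$ and the $\Lambda(x_i)$ factors do not survive the differencing: they are absorbed into the prefactor $X^{2n+2\ell+\varepsilon}$ in (\ref{CS twice ineq}), using $\sum_{x\leq X}\Lambda(x)^2\ll X\log X$ and the boundedness of the $\psi_i$.

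First I would invoke \cite[Lemma 4.3]{Bir} on this inner sum uniformly in $\mathbf{w}$. The conclusion is the dichotomy: either
$$
\Big|\sum_{\mathbf{s},\mathbf{s}',\mathbf{t},\mathbf{t}'} e\bigl(\alpha\,\mathfrak{g}_{\mathbf{w}}(\ldots)\bigr)\Big| \ll X^{2n-2\ell-\varsigma\,2^{1-d}\,\mathcal{C}_0+\varepsilon},
$$
or there exist $1\leq q\leq X^{(d-1)\varsigma}$ and $a\in\mathbb{Z}$ with $\gcd(a,q)=1$ such that $|q\alpha-a|\leq X^{-d+(d-1)\varsigma}$. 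The rational-approximation alternative is a property of $\alpha$ alone, so if Birch's alternative (ii) occurs for the particular $\mathbf{w}$ realizing the maximum in (\ref{CS twice ineq}), we immediately land in alternative (ii) of the lemma and are finished.

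Otherwise the first Birch alternative holds uniformly in $\mathbf{w}$; substituting it into (\ref{CS twice ineq}) yields
$$
|S(\alpha)|^4 \ll X^{2n+2\ell+\varepsilon}\cdot X^{2n-2\ell-\varsigma\,2^{1-d}\,\mathcal{C}_0+\varepsilon} = X^{4n-\varsigma\,2^{1-d}\,\mathcal{C}_0+\varepsilon},
$$
and taking fourth roots produces exactly alternative (i), namely $|S(\alpha)|\ll X^{n-\varsigma\,2^{-1-d}\,\mathcal{C}_0+\varepsilon}$. There is no serious obstacle here beyond book-keeping: the only point that requires any care is verifying that the coefficients of the lower-degree terms of $\mathfrak{g}_{\mathbf{w}}$, which depend on $\mathbf{w}$, do not affect the Birch bound; this is standard since the Weyl differencing in \cite[Lemma 4.3]{Bir} eliminates all terms of degree less than $d$ and the resulting minor-arc estimate depends only on the degree-$d$ homogeneous part.
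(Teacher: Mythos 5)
Your proof is correct and follows the same route the paper indicates: the paper states Lemma \ref{last lem} is ``an immediate consequence of \cite[Lemma 4.3]{Bir}'' once (\ref{CS twice ineq}) and (\ref{ineq codim 1'}) are in place, and your write-up is precisely the unpacking of that claim. You apply Birch's dichotomy to the $(2n-2\ell)$-variable unweighted sum attached to $\mathfrak{g}_{\mathbf{w}}$, observe that the rational-approximation branch is a property of $\alpha$ alone (hence yields (ii) directly), and in the other branch combine $X^{2n-2\ell-\varsigma 2^{1-d}\mathcal{C}_0+\varepsilon}$ with the $X^{2n+2\ell+\varepsilon}$ prefactor and take fourth roots to obtain $X^{n-\varsigma 2^{-1-d}\mathcal{C}_0+\varepsilon}$, matching (i). Your remarks on uniformity in $\mathbf{w}$ (lower-degree coefficients are killed by the $(d-1)$-fold differencing, so only $\mathfrak{g}^{[d]}_{\mathbf{w}}$ enters) are the right justification and correspond to the additional explanation the paper defers to \cite[Lemma 4.2]{Y2}.
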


We define $\mathfrak{M}^{(\textnormal{I})}(\varsigma)$ to be the set of $\alpha \in [0,1)$
satisfying ii) in Lemma \ref{last lem}, i.e. there exist $1 \leq q \leq X^{ (d-1) \varsigma }$ and $a \in \mathbb{Z}$ with
$\gcd( {a}, q) = 1$ such that
\begin{eqnarray}
| q \alpha - a | \leq X^{-d + (d-1) \varsigma }.
\notag
\end{eqnarray}

\subsection{Sliding scale argument}
For any $0 < \varsigma < 1$,
the Lebesgue measure of $\mathfrak{M}^{(\textnormal{I})}(\varsigma)$ is bounded by the following quantity
\begin{eqnarray}
\textnormal{meas}(\mathfrak{M}^{(\textnormal{I})}( \varsigma) )
\ll \sum_{1 \leq  q \leq X^{  (d-1) \varsigma  } }
\sum_{ \substack{  0 \leq a \leq q  \\  \gcd(a, q) = 1 } } q^{-1} X^{-d  +   (d-1)  \varsigma  }
\label{Lm2}
\ll X^{- d  + 2 (d - 1)  \varsigma  }.
\end{eqnarray}
We then define a sequence
$$
0 < \varsigma = \varsigma_0 < \varsigma_1 < \cdots < \varsigma_J < 1
$$
satisfying
$$
\frac{\tau}{2} > 2 (d-1) (\varsigma_{j+1} - \varsigma_j)
\quad
(0 \leq j \leq J-1)
\quad
\textnormal{ and }
\quad
1 - \tau < \varsigma_J < 1,
$$
where $\tau > 0$ is sufficiently small. In particular, $J \ll 1$.
From the definition of $\mathcal{C}_0$ given in (\ref{defn C0}), it is clear that
$$
2^{- 1 - d} \mathcal{C}_0 > d \quad \textnormal{ and } \quad  2^{- 1 - d} \mathcal{C}_0 -  2 (d-1)   > 0.
$$
Then by Lemma \ref{last lem} and (\ref{Lm2}) we have
\begin{eqnarray}
\label{minor arc 1 bdd1+}
\int_{[0,1) \setminus  \mathfrak{M}^{(\textnormal{I})}( \varsigma) }  |S (\alpha)| \tsp {d} {\alpha}
&=&
\int_{[0,1) \setminus  \mathfrak{M}^{(\textnormal{I})} ( \varsigma_J) }  |S (\alpha)| \tsp {d} {\alpha}
+
\sum_{j=0}^{J-1} \int_{\mathfrak{M}^{(\textnormal{I})} ( \varsigma_{j+1}) \setminus  \mathfrak{M}^{(\textnormal{I})}( \varsigma_{j}) }  |S (\alpha)| \tsp {d} {\alpha}
\\
\notag
&\ll&
X^{n - 2^{- 1 - d} \mathcal{C}_0 + \varepsilon }
+
X^{n   - d   -  ( 2^{- 1 - d} \mathcal{C}_0 -  2 (d-1)  ) \varsigma  + \varepsilon }
\\
\notag
&\ll&
X^{n   - d  - \varepsilon }.
\end{eqnarray}
Let $\vartheta_0, \gamma, \lambda > 0$ be such that (\ref{theta0 major}) is satisfied.
Let us set $0 <  \varsigma < \vartheta_0/(d-1) $ so that
$$
\mathfrak{M}^{(\textnormal{I})} (\varsigma) \subseteq  \mathfrak{M}^+ ( \vartheta_0),
$$
hence
$$
\mathfrak{m}^+ ( \vartheta_0 )
=  [0,1) \setminus  \mathfrak{M}^{+}( \vartheta_0 ) \subseteq  [0,1) \setminus  \mathfrak{M}^{(\textnormal{I})}(\varsigma).
$$
Therefore, we obtain from (\ref{minor arc 1 bdd1+}) that
\begin{eqnarray}
\int_{ \mathfrak{m}^+ ( \vartheta_0 ) } |S (\alpha)| \tsp d \alpha
\leq
\int_{[0,1) \setminus  \mathfrak{M}^{(\textnormal{I})} ( \varsigma) }  |S (\alpha )| \tsp {d} {\alpha}
\ll X^{n - d - \varepsilon}.
\end{eqnarray}
By combining this estimate with Proposition \ref{prop major}, we obtain Theorem \ref{mainthm}
when $F$ satisfies (I) of Definition \ref{dichotomy} as well; this completes the proof of Theorem \ref{mainthm}.

Finally, we remark that it may be possible to further improve on Theorem \ref{mainthm} by using the Heath-Brown identity.
The strategy would be to combine it with a combinatorial result such as \cite[Lemma 3.1]{Poly} or its extensions (The range of $\sigma$ in \cite[Lemma 3.1]{Poly} can be extended at the cost of additional cases as explained in \cite[Remark 3.2]{Poly}.),
and deal with the resulting exponential sums. The method of this paper is suitable when the weights correspond to the Type I/II alternative in \cite[Lemma 3.1]{Poly}.
However, in order to make further progress with this approach, one would need to be able to deal with the exponential sums of the form $S(\alpha)$ with (variants of) divisor functions as weights without resorting to the methods of this paper, that is to make use of the bihomogeneous structure to remove all the weights. If this can be done, then it may be possible to establish Theorem \ref{mainthm} with $\textnormal{codim} \tsp V_F^* = O(d^{2 - \Delta} 4^d)$ for some $\Delta > 0$.

\end{document}